\begin{document}

\title{Homotopy Cardinality via Extrapolation of Morava-Euler Characteristics}
\author{Lior Yanovski\thanks{Einstein Institute of Mathematics, Hebrew University of Jerusalem.}}

\maketitle

\begin{abstract}
    We answer a question of John Baez, on the relationship between the classical Euler characteristic and the Baez-Dolan homotopy cardinality, by constructing a unique additive common generalization after restriction to an odd prime $p$. This is achieved by $\ell$-adically extrapolating to height $n=-1$ the sequence of Euler characteristics associated with the Morava $K(n)$ cohomology theories for (any) $\ell \mid p-1$. We compute this sequence explicitly in several cases and incorporate in the theory some folklore heuristic comparisons between the Euler characteristic and the homotopy cardinality involving summation of divergent series. 
\end{abstract}

\begin{figure}[H]
    \centering{}
    \includegraphics[scale=0.148]{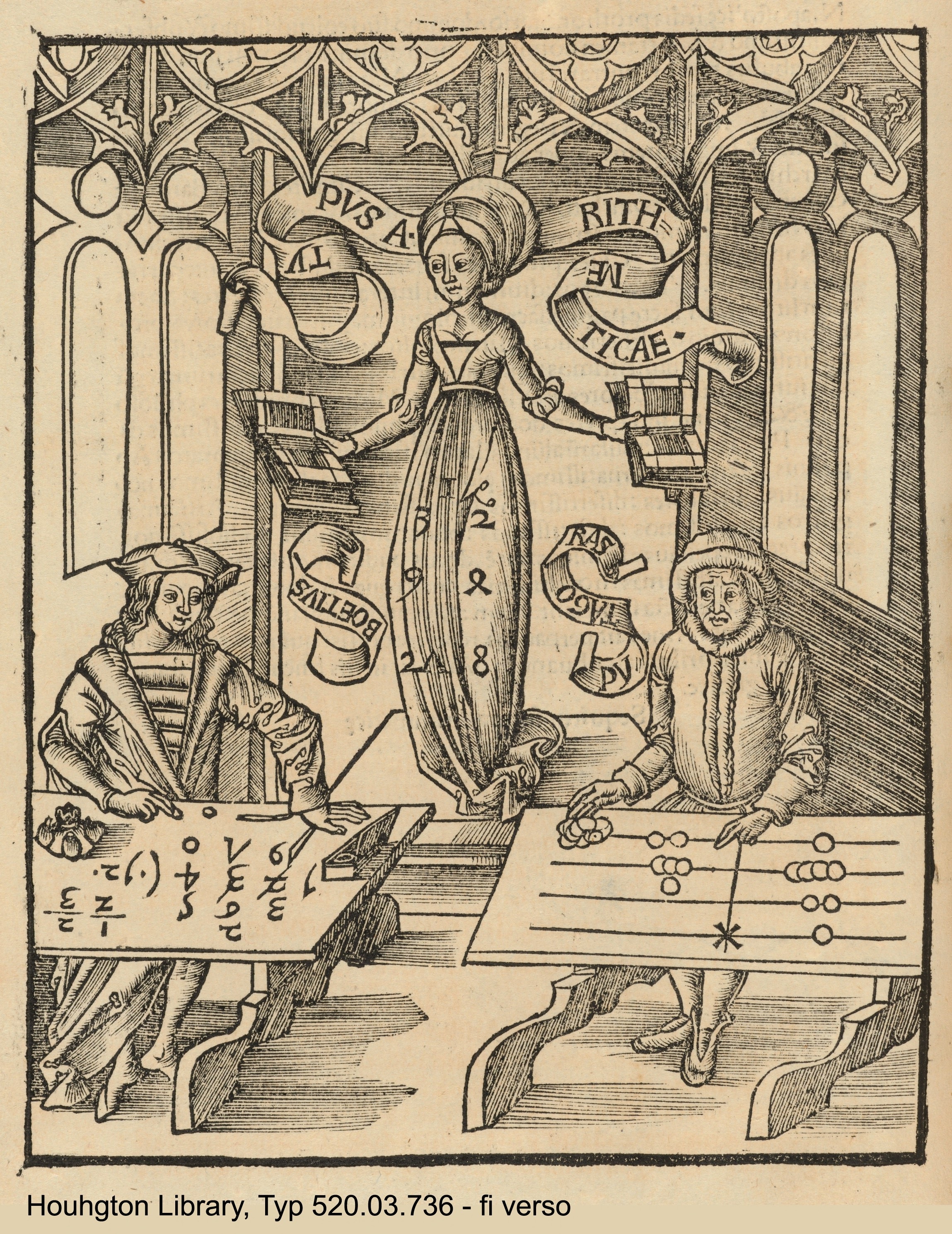}
    \caption*{Illustration from \textit{Margarita philosophica}, 1503, by Gregor Reisch (Typ 520.03.736, Houghton Library, Harvard University).}
\end{figure}

\tableofcontents

\section{Introduction}

\subsubsection{Counting and homotopy}

The original sin of decategorification is the passage from the category of finite sets $\Fin$ to its set of isomorphism classes $\NN$. The process of taking a finite set to its isomorphism class, i.e., its cardinality, forgets crucial information about the symmetries of the set as well as the non-invertible maps between different sets. However, it allows for a useful technique known as \textit{counting}. The categorical constructions of the binary co-product and product  in $\Fin$ are reflected, respectively, by the addition and multiplication binary operations on the set $\NN$. The cardinality map
\[
    |-| \colon \Fin \too \NN 
\]
thus becomes additive and multiplicative in the following sense:
\[
    |X \sqcup Y| = |X| + |Y| \qquad,\qquad |X\times Y| = |X|\cdot|Y|.
\]
This is the foundation of arithmetic.

The additivity property can be further extended to partially account for \textit{subtraction} as well. Given injections $X \hookleftarrow Z \into Y$ of finite sets, we have the inclusion-exclusion property
\[
    |X\cup_Z Y| =|X| + |Y| - |Z|. 
\]
The restriction to injections is an artifact of the discrete nature of $\Fin$. Each point of $Z$ identifies a point of $X$ with a point of $Y$ in the pushout $X\cup_Z Y$, but identifying the same two elements twice does not have any additional effect on the resulting set nor its cardinality. However, if we consider instead of the ordinary pushout the \textit{homotopy} pushout, each such identification is replaced by adding a \textit{path} between the two points.  Assigning to each such path the value $-1$ recovers the total value $|X| + |Y| - |Z|$ for the supposed cardinality of the pushout. Namely, the inclusion-exclusion property continues to hold if we take into account the number of times we have identified each two points. Going further, identifying two paths should be replaced by a ``path between paths'', or a 2-dimensional disc. Each such disc should contribute $+1$ to the total cardinality as it ``identifies two identifications'', and so on and so forth. This idea is formalized in the notion of \textit{Euler characteristic}. Let $\Spc^\fin$ be the category of finite spaces, i.e., those generated by the point under finite homotopy colimits. There exist a unique homotopy invariant function 
\[
    \chi \colon \Spc^\fin \too \ZZ,
\]
normalized by $\chi(\es) = 0$ and $\chi(\pt) = 1$, which is additive in the sense that 
\[
    \chi(X\cup_Z^h Y) = \chi(X) + \chi(Y) - \chi(Z).
\]

A presentation of a space $X$ as an (iterated) finite homotopy colimit of points equips it with a finite cell structure. If $x_n$ is the number of $n$-dimensional cells, then by the iterated inclusion-exclusion property we get
\[
    \chi(X) = \sum_{n \ge 0} (-1)^n x_n \qin \ZZ,
\]
so that such $\chi$ is unique and is computed in the expected manner. To show that this expression is well defined (i.e., depends only on the homotopy type of $X$) we can use the fact that it identifies with a manifestly homotopy invariant one via (co)homology:
\[
    \chi(X) = 
    \dim_\QQ H^{\mathrm{even}}(X;\QQ) - 
    \dim_\QQ H^{\mathrm{odd}}(X;\QQ).
\]
We can thus consider the Euler characteristic as the unique  additive extension of the notion of cardinality of finite sets to the world of homotopically finite spaces. 

Going back to ordinary cardinality of finite sets, we observe that the multiplicativity property can also be extended to partially account for \textit{division}. Let $G$ be a finite group acting freely on a finite set $X$. Then, we have
\[
    |X/G| = |X| / |G|,
\]
where $X/G$ is the set of orbits under the action. Here again, the restriction to free actions is dictated by the discreteness of finite sets. 
Considering the \textit{homotopy} orbits instead yields a $1$-truncated space (equivalently, a groupoid)
\[
    X/\!\!/G = \bigsqcup_{x \in X/G} BG_x.
\]
If we define for a finite group $G$ the cardinality of its classifying space $BG = \pt /\!\!/G$ to be $1/|G|$, we get by the orbit-stabilizer theorem
\[
    |X/\!\!/ G| = |X|/|G|.
\]
Intuitively, $BG$ has one point but with $G$ symmetries and we want to take these symmetries into account when considering its cardinality. Motivated by this, Baez and Dolan introduced in \cite{baez2001finite} the following higher homotopical generalization: 
\begin{defn}[Baez-Dolan]
    A space $X$ is called \textit{$\pi$-finite} if it has finitely many homotopy classes. Namely, there are finitely many connected components, each having finitely many non-vanishing homotopy groups and all of them are finite. For a connected $\pi$-finite space $X$ the \textit{homotopy cardinality} of $X$ is given by
    \[
        |X| = \prod |\pi_n(X)|^{(-1)^n} \qin \QQ_{\ge 0}.
    \]    
    We extend this function to non-connected $\pi$-finite spaces $X$ by summing the cardinalities of the (finitely many) components of $X$.
\end{defn}

The idea behind this definition is that elements of $\pi_2$ are symmetries of elements in $\pi_1$ and thus are ``symmetries of symmtries'', which leads to the above formula by a multiplicative analogue of the iterated inclusion-exclusion principle discussed above in relation to the Euler characteristic.  Using the long exact sequence in homotopy groups we easily deduce the extended multiplicative property of the homotopy cardinality: Given a fiber sequence of $\pi$-finite spaces
\[
    X \too Y \too Z,
\]
where $Z$ is connected, we have
\[
    |Y| = |X| |Z| \qin \QQ_{\ge 0}.
\]
Put differently, the $\pi$-finite group $G = \Omega Z$ acts on the $\pi$-finite space $X$ with homotopy orbits $X/\!\!/G = Y$, and we have
\[
    |X/\!\!/G| = |X||BG| = |X| / |G|.
\]
In particular, the Baez-Dolan homotopy cardinality
\[
    |-| \colon \Spc^{\pi\text{-}\mathrm{fin}} \too \QQ_{\ge 0}
\]
is the unique homotopy invariant \textit{multiplicative} extension of the cardinality of finite sets to suitably multiplicatively homotopy finite spaces. 

It is natural to ask whether the two homotopy theoretic extensions of counting, the Euler characteristic and the homotopy cardinality, agree. But, as noted already by Baez and Dolan, there is no way to compare the two as the required finiteness conditions are almost mutually exclusive. That is, the only finite and $\pi$-finite spaces are the finite sets. Nevertheless, an impressive number of heuristics collected in \cite{baez2005mysteries} hints that the two notions might be related after all. The most basic of such heuristics concerns the classifying space of a finite group $G$. The homotopy cardinality of $BG$ is by definition $1/|G|$, but since $BG$ is always infinite dimensional, it has no well-defined Euler characteristic. Still, the standard cell structure on $BG$ arising from the bar construction has $(|G|-1)^n$ cells in each dimension $n$. Thus, by applying naively the formula for the Euler characteristic we get the \textit{divergent} geometric series
\[
    \chi(BG) = \sum_{n=0}^\infty (-1)^n(|G| - 1)^n.
\]
Summing it forcibly, using any reasonable kind of regularization, yields the ``expected'' answer
\[
    \frac{1}{1+(|G| - 1)} = \frac{1}{|G|}.
\]
This and other heuristic manipulations with divergent series lead Baez to pose the following 

\begin{qst}[\cite{baez2003euler}]
    How are the Euler characteristic and homotopy cardinality related?
\end{qst}

There is a considerable body of literature on various generalizations of Euler characteristic, some of which are related to the above question (e.g. \cite{leinster2008euler}). More specifically, the subject of ``Euler characteristics of groups'' is quite and old one, and an excellent textbook account can be found in \cite[Chapter IX]{brown2012cohomology}. From our perspective, this theory can be seen as giving a well behaved common generalization of the Euler characteristic of finite spaces and the homotopy cardinality of classifying spaces of finite groups. However, it seems fair to say that no satisfactory answer was given so far to Baez's question in the full generality of $\pi$-finite spaces.\footnote{A solution using a different technique was claimed in the preprint \cite{berman2018euler}, but apparently it has a mistake. The proposed approach is however still interesting and merits further investigation.}

\subsubsection{Generalized homotopy cardinality}

One way to approach Baez's question is to first consider the extent to which the two notions of counting are \textit{compatible}. Namely, does there exist a natural class of spaces including both finite and $\pi$-finite spaces on which we can define a well behaved ``generalized homotopy cardinality'' function extending both the Euler characteristic on the former and the Baez-Dolan homotopy cardinality on the latter. A rather minimal such setting would be to take the category of spaces $\Spc^{\mathrm{small}}$ generated by the $\pi$-finite ones under finite homotopy colimits, and ask whether there is an additive function 
\[
    |-| \colon \Spc^{\mathrm{small}} \too \QQ,
\]
in the sense that
\[
    |X \cup^h_Z Y| = |X| + |Y| - |Z|,
\]
which assigns to $\pi$-finite spaces their homotopy cardinality.
Unfortunately, this is easily seen to be impossible. For every two distinct primes $p$ and $q$, the homotopy pushout square
\[\begin{tikzcd}
	{BC_p\times BC_q} & {BC_q} \\
	{BC_p} & {BC_p*BC_q =\pt}
	\arrow[from=1-1, to=2-1]
	\arrow[from=1-1, to=1-2]
	\arrow[from=1-2, to=2-2]
	\arrow[from=2-1, to=2-2]
\end{tikzcd}\]

would imply
\[
     |BC_p| + |BC_q| - |BC_p\times BC_q)| = |\pt|,
\]
so that
\[
    \Big(1 - \frac{1}{p}\Big)\Big(1 - \frac{1}{q}\Big) = 0.
\]
Negotiating with the failure, one might hope the obstruction to the existence of such a function lies solely in the interaction between different primes, and indeed this is our main result regarding Baez's question. Let $\Spc^{\psmall}$ be the category of spaces generated under finite colimits by $\pi$-finite \textit{$p$-spaces}, i.e., those whose homotopy groups are $p$-groups. 

\begin{thmx}\label{Intro_Generalzied_Homotopy_Cardinality}
    For every prime $p\neq 2$, there exists a (necessarily unique) generalized homotopy cardinality function 
    \[
        |-| \colon \Spc^{\psmall} \too \QQ,
    \]
    such that,
    \begin{enumerate}
        \item If $X \in \Spc^{\psmall}$ is a connected $\pi$-finite $p$-space, then 
        \[
            |X| = \prod_{n \ge 1} |\pi_n(X)|^{(-1)^n}.
        \]

        \item For every diagram $X\leftarrow Z \to Y$ in $\Spc^{\psmall}$ we have
        \[
            |X \cup^h_Z Y| = |X| + |Y| - |Z|.
        \]

        \item For every $X,Y \in \Spc^{\psmall}$, we have
        \[
            |X \times Y| = |X|\cdot|Y|.
        \]
    \end{enumerate}    
\end{thmx}

Admittedly, this result leaves something to be desired (in addition to treating the case $p=2$ of course). A somewhat more natural setting, which treats subtraction and division on equal footing, would be to consider the category $\Spc^{p\text{-}\mathrm{bigger}}$ of spaces generated by a point under both finite and $\pi$-finite $p$-space shaped colimits, and ask for a function
\[
    \norm{-} \colon \Spc^{p\text{-}\mathrm{bigger}} \too \QQ
\]
that satisfies both the generalized additivity property 
\[
    \norm{X \cup^h_Z Y} = \norm{X} + \norm{Y} - \norm{Z}
\]
and the generalized multiplicativity property
\[
    \norm{X /\!\!/ G} = \norm{X}/|G|
\]
for every $\pi$-finite $p$-group $G$ acting on $X\in\Spc^{p\text{-}\mathrm{bigger}}$, where $|G|$ is the Baez-Dolan homotopy cardinality. Note that we have $\Spc^{\psmall} \sseq \Spc^{p\text{-}\mathrm{bigger}}$, and a function $\norm{-}$ with the above properties would restrict to the generalized homotopy cardinality of \Cref{Intro_Generalzied_Homotopy_Cardinality}.
However, it is not known to the author at this point how to construct such a function $\norm{-}$ (nor even whether $\Spc^{p\text{-}\mathrm{bigger}}$ is actually bigger than $\Spc^{\psmall}$).

\subsubsection{Morava-Euler characteristics}

While the definition of the Euler characteristic of a space $X$ in terms of counting cells makes sense only when $X$ has a cell structure with finitely many cells, the definition in terms of rational cohomology
\[
    \chi(X) := 
    \dim_\QQ H^{\mathrm{even}}(X;\QQ) - \dim_\QQ H^{\mathrm{odd}}(X;\QQ)
\]
makes sense for all spaces $X$ with finite dimensional rational cohomology, which includes all $\pi$-finite spaces. Moreover, $\chi$ is additive on all homotopy pushouts of spaces on which it is defined, so in particular, we do get an additive function
\[
    \chi \colon \Spc^{\psmall} \too \ZZ.
\]
The problem of course is that it doesn't give the desired values on $\pi$-finite $p$-spaces (i.e., their homotopy cardinality), if only because it assumes only \textit{integral} values. In fact, a $\pi$-finite space $X$ is discrete in the eyes of rational cohomology, so we get $\chi(X) = |\pi_0(X)|$. The key idea behind the construction of the generalized homotopy cardinality of \Cref{Intro_Generalzied_Homotopy_Cardinality} is to consider Euler characteristics with respect to other cohomology theories, to produce a sequence of additive functions 
\[
    \chi_n \colon \Spc^{\psmall} \too \ZZ.
\]
While non of them is going to be the desired generalized homotopy cardinality (since, again, they all assume integral values) a certain limiting process will yield a function
\[
    |-| \colon \Spc^{\psmall} \too \QQ
\]
with the correct values on $\pi$-finite $p$-spaces. Furthermore, the additivity in homotopy pushouts will persist through this limiting process, so it will be enjoyed by $|-|$ as well.

The cohomology theories alluded to above are the \textit{Morava $K$-theories} from chromatic homotopy theory. For an implicit fixed prime $p$ and a chromatic height $0\le n \le \infty$, we have the Morava $K$-theory ring spectrum $K(n)$ representing a certain (extraordinary) cohomology theory for spaces. At the extremal cases we have the ordinary cohomology theories
\[
    K(0)^* (X) = H^*(X; \QQ) \quad,\quad K(\infty)^* (X) = H^*(X; \FF_p),
\]
while the intermediate values of $n$ provide a certain extrapolation between zero characteristic and positive characteristic, which exists only in ``higher algebra''. For each $K(n)$, the coefficient ring $\kappa_n := K(n)^*(\pt)$ is a graded field, so we may consider the dimension of $K(n)^*(X)$ over $\kappa_n$. By the fundamental computation carried out in \cite{ravenel1980morava}, the objects of $\Spc^{\mathrm{small}}$ have finite dimensional $K(n)$-cohomology for every finite $n$. 

\begin{defn}
    For every $X \in \Spc^{\mathrm{small}}$, the \textit{Euler-Morava characteristic} of height $n$ of $X \in \Spc^{\mathrm{small}}$ is given by
    \[
        \chi_n(X) := 
        \dim_{\kappa_n} K(n)^{\mathrm{even}}(X) -
        \dim_{\kappa_n} K(n)^{\mathrm{odd}}(X).
    \] 
\end{defn}

This produces the desired sequence of additive functions when restricted to $\Spc^{\psmall}$. Remarkably, using a deep theory of \textit{tempered ambidexterity} developed by Lurie, these numbers can be described in elementary terms by counting homotopy classes of maps from torii.

\begin{thm}[{\cite[Corollary 4.8.6]{Lurie_Ell3}}]
    For a $\pi$-finite $p$-space $X$, 
    \[
        \chi_{n} (X) = |[\TT^n, X]| \qin \NN,
    \]
    where $\TT^n = (S^1)^n$ is the $n$-dimensional torus.
\end{thm}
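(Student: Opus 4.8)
The plan is to reduce the statement to a single \emph{transchromatic free-loop formula} and then iterate. Write $\mathcal{L}X := \mathrm{Map}(S^1, X)$ for the free loop space; since $S^1$ is a finite complex and $X$ is $\pi$-finite, $\mathcal{L}X$ is again $\pi$-finite (it sits in a fibration $\Omega X \to \mathcal{L}X \to X$ with $\pi$-finite total data), and if $X$ is a $p$-space then so is $\mathcal{L}X$. The key claim to establish is that for every $\pi$-finite $p$-space $X$ and every $n \ge 1$,
\[
    \chi_n(X) = \chi_{n-1}(\mathcal{L}X),
\]
with both Morava--Euler characteristics taken at the \emph{same} prime $p$. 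Granting this, iterating gives $\chi_n(X) = \chi_{n-1}(\mathcal{L}X) = \dots = \chi_0(\mathcal{L}^n X)$, and the exponential law $\mathrm{Map}(S^1, \mathrm{Map}(\TT^{k}, X)) \simeq \mathrm{Map}(S^1 \times \TT^{k}, X) = \mathrm{Map}(\TT^{k+1}, X)$ identifies $\mathcal{L}^n X \simeq \mathrm{Map}(\TT^n, X)$. Finally $\chi_0 = \chi_{K(0)}$ is the rational Euler characteristic, and a connected $\pi$-finite space has vanishing positive-degree rational cohomology (its higher homotopy, and hence higher homology, is finite), so $\chi_0(Y) = \dim_\QQ H^0(Y;\QQ) = |\pi_0 Y|$ for every $\pi$-finite $Y$. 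Applying this to $Y = \mathrm{Map}(\TT^n, X)$, whose set of components is by definition $[\TT^n, X]$, yields $\chi_n(X) = |[\TT^n, X]|$.

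It remains to prove the loop formula, and here I would pass through the symmetric monoidal dimension in $K(n)$-local spectra. \emph{First}, by the Ravenel--Wilson finiteness quoted above together with the $\infty$-semiadditivity of $K(n)$-local spectra (Hopkins--Lurie), the suspension spectrum $\Sigma^\infty_+ X$, regarded $K(n)$-locally, is dualizable and canonically self-dual, and its dimension $\dim(\Sigma^\infty_+ X) \in \pi_0 \mathbb{S}_{K(n)} \cong \ZZ_p$ (for $n \ge 1$) equals the integer $\chi_n(X)$: applying $K(n) \otimes (-)$ only recovers this modulo $p$, but comparing instead with Morava $E$-theory $E_n$ --- whose completed homology on a $\pi$-finite $p$-space is a finite \emph{free} even $\pi_* E_n$-module, by Ravenel--Wilson--Yagita-type computations --- the dimension computed in $E_n$-modules is literally the rank $\chi_n(X)$, which lifts uniquely along the injection $\ZZ_p = \pi_0 \mathbb{S}_{K(n)} \hookrightarrow \pi_0 E_n$. \emph{Second}, there is a general identity, valid for any $\pi$-finite $X$ in any presentably symmetric monoidal $\infty$-semiadditive $\infty$-category, expressing the dimension of $\Sigma^\infty_+ X$ as the $\infty$-semiadditive cardinality of its free loop space: $\dim(\Sigma^\infty_+ X) = |\mathcal{L}X|$. \emph{Third}, one identifies the $K(n)$-local semiadditive cardinality of a $\pi$-finite $p$-space $Y$ with its previous Morava--Euler characteristic, $|Y| = \chi_{n-1}(Y)$. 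Chaining the three with $Y = \mathcal{L}X$ gives $\chi_n(X) = \dim(\Sigma^\infty_+ X) = |\mathcal{L}X| = \chi_{n-1}(\mathcal{L}X)$.

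The identity $|Y| = \chi_{n-1}(Y)$ is the real content and the step I expect to be the main obstacle. I would prove it by induction on the Postnikov tower of $Y$. The base case is an Eilenberg--MacLane space $B^q A$ for $A$ a finite abelian $p$-group, where $\chi_{n-1}(B^q A)$ is given by the Ravenel--Wilson computation and $|B^q A|$ by the Carmeli--Schlank--Yanovski evaluation of cardinalities of Eilenberg--MacLane spaces in $K(n)$-local spectra, and the two match (consistency across products of such spaces is in any case forced by the first two identities above). For the inductive step, both invariants are multiplicative along $\pi$-finite fibrations: multiplicativity of the semiadditive cardinality is a formal consequence of $\infty$-semiadditivity (the projection formula for the norm), while multiplicativity of the Morava--Euler characteristic along a --- possibly twisted --- $\pi$-finite fibration reflects the degeneration of the $K(m)$-based Serre spectral sequence for $\pi$-finite spaces, itself a consequence of $K(m)$-ambidexterity. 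Thus the only genuinely nontrivial input is the ambidexterity of $K(m)$-local spectra along $\pi$-finite diagrams, which makes all the relevant norm and pushforward maps invertible; everything else is explicit Ravenel--Wilson-type bookkeeping. As an alternative that avoids the cardinality formalism, one could instead establish the base case $X = BG$ of the loop formula directly from Hopkins--Kuhn--Ravenel character theory (both sides count conjugacy classes of commuting tuples of $p$-power-order elements of $G$) and run the same Postnikov induction on the identity $\chi_n = \chi_{n-1}\circ\mathcal{L}$ itself --- essentially Lurie's route through tempered ambidexterity.
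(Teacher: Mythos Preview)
The paper does \emph{not} prove this statement: it is cited as \cite[Corollary~4.8.6]{Lurie_Ell3} and explicitly treated as a black box (``related to the Euler--Morava characteristics of $X$ only via Lurie's formula, which we treat as a black-box''). The only argument in the paper is the informal sketch in the subsection \emph{Relation to higher semiadditivity}, which records exactly the chain you propose: $\chi_n(X) = \dim(X \otimes \SS_{K(n)}) = |LX|_n$, together with the input from tempered ambidexterity that the integral part of $|Y|_n$ agrees with $|L^nY|_0$, whence $\chi_n(X) = |L^{n+1}X|_0 = |\pi_0 L^nX| = |[\TT^n,X]|$. So your strategy is essentially the one the paper attributes to Lurie, and matches its informal outline.

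Two small corrections. First, $\pi_0 \SS_{K(n)}$ is \emph{not} $\ZZ_p$ for $n\ge 1$: as the paper itself remarks, it is a nilpotent thickening of $\ZZ_p$, and Carmeli--Yuan show the nilpotents can genuinely appear in dimensions. Your rerouting through $E_n$ is the correct fix, but you should state it that way rather than asserting an isomorphism. Second, your ``third'' step $|Y|_n = \chi_{n-1}(Y)$ is exactly the alignment the paper records (the sequence $|X|,\chi_0(X),\chi_1(X),\dots$ matches $|X|_0,|X|_1,|X|_2,\dots$ up to nilpotents), and this is precisely where the nontrivial input --- Lurie's tempered ambidexterity --- enters. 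Your proposed Postnikov induction with Ravenel--Wilson plus multiplicativity is a plausible alternative route to that identity, but be aware that this is the genuinely deep step: the multiplicativity of $\chi_{n-1}$ along $\pi$-finite fibrations is fine over a graded field (Euler characteristics are multiplicative in Serre spectral sequences regardless of twisting), but matching the Eilenberg--MacLane values requires the explicit CSY computation of $|B^dC_p|_n$, which itself rests on the ambidexterity machinery you are trying to use.
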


This formula is quite remarkable. For example, note that while the left hand side is given by the \textit{difference} of two dimensions, the right hand side is given by the cardinality of a single functorially defined set.

\begin{rem}\label{rem:Char_Non_p}
    In fact, Lurie proves more generally that for any $\pi$-finite space $X$ the Morava-Euler characteristic $\chi_n(X)$ equals the number of homotopy classes of maps from the $p$-adic torus $B^n\ZZ_p$ to~$X$. In this paper we shall mostly stick to $p$-spaces where these are the same. The only exception is when we consider the example of $X = BG$ for a general finite group $G$ in \Cref{Sec:Examples}. This special case predates Lurie's \cite{Lurie_Ell3} and was already treated in the work of Hopkins-Kuhn-Ravenel on generalized character theory \cite{hopkins2000generalized}.
\end{rem}

Shifting perspective, we consider for a fixed $\pi$-finite $p$-space $X$, the function $\lfun_X \colon \NN \to \NN$ given by 
\[
    \lfun_X(n) := \chi_n(X) = |[\TT^n,X]|.
\]

Our main result regarding this function is the following:

\begin{thmx}\label{Intro_Morava_Euler_Cont}
    For every $\pi$-finite $p$-space $X$ and a prime $\ell \mid (p-1)$, the function $\lfun_X \colon \NN \to \NN \sseq \QQ_\ell$
    extends uniquely to a continuous $\ell$-adic function 
    \[
        \hat{\lfun}_X \colon \ZZ_\ell \too \QQ_\ell,
    \] 
    such that (independently of $\ell$)
    \[
        \hat{\lfun}_X(-1) = |X| \qin \QQ \ (\sseq \QQ_\ell). 
    \]
\end{thmx}

It follows by additivity that for every $X \in \Spc^{\psmall}$, the sequence of Morava Euler characteristics $\chi_n(X)$ is uniformly $\ell$-adically continuous in $n$ and its extrapolation to $n=-1$ is a rational number. We get that
\[
    |-| := \chi_{-1} \ \colon\ \Spc^{\psmall} \too \QQ
\]
is the desired generalized homotopy cardinality function, thus proving \Cref{Intro_Generalzied_Homotopy_Cardinality}. 

To get a feeling for the above construction of the generalized homotopy cardinality, it is instructive to consider the following two examples:
\begin{enumerate}
    \item For $X$ a finite space, the sequence $\chi_n(X)$ is \textit{constant}, so in particular 
    \[
        \chi_n(X) = \chi_0(X) \qin \ZZ.
    \] 
    Taking the $\ell$-adic limit $n\to -1$ we still get the classical  Euler characteristic of $X$.

    \item For $X = B^d C_p$ the Eilenberg-MacLane space for $C_p$ of  degree $d$, we have 
    \[
        \chi_n(B^dC_p) = p^{\binom{n}{d}} \qin \ZZ.
    \]
    Now, while the function $\binom{n}{d}$ is defined combinatorially only for $n \ge 0$, it is polynomial in $n$ and has therefore an extrapolation to $n = -1$ given by
    \[
        \binom{-1}{d} = 
        \frac{(-1)(-2)\cdots(-d)}{1\cdot 2 \cdots d} 
        = (-1)^d.
    \]
    Furthermore, since $p \equiv 1$ modulo $\ell$, the function $p^n$ is also $\ell$-adically continuous, so taking the $\ell$-adic limit $n \to -1$ of the composition $p^{\binom{n}{d}}$, we get
    \[
         p^{\binom{-1}{d}} = p^{(-1)^d} = |B^dC_p|.
    \]
\end{enumerate}

We note that for a general $\pi$-finite $p$-space $X$, the cohomology theory $K(n)$ sees only the $n$-th Postnikov truncation of $X$, so we really need the entire sequence of functions $\chi_n$ to be able to recover the homotopy cardinality of all $\pi$-finite $p$-spaces. A curious by-product of \Cref{Intro_Morava_Euler_Cont} is that the homotopy cardinality of such spaces is a \textit{stable} invariant. 


\subsubsection{Relation to higher semiadditivity}

Our proof of \Cref{Intro_Morava_Euler_Cont} is entirely elementary. We analyse the numerical properties of the sequence of numbers $\lfun_X(n) := |[\TT^n,X]|$ related to the Euler-Morava characteristics of $X$ only via Lurie's formula, which we treat as a black-box. Nevertheless, we take the opportunity to provide some theoretical context, that might point towards deeper structural phenomena of which the results of this paper are mere numerical shadows (and perhaps also to stronger versions thereof). As this is solely for expository purposes, we merely sketch the relevant ideas in an informal way. More information can be found in \cite{HLAmbiKn, TeleAmbi, AmbiHeight, Lurie_Ell3}. 

To begin with, a more highbrow way to define the classical Euler characteristic of a finite space $X$ is as the \textit{symmetric monoidal dimension} of its suspension spectrum $X\otimes \SS \in \Sp$. Namely, for a general dualizable spectrum $X$, its dimension (trace of the identity) is an element of $\pi_0(\SS) = \ZZ$.
Since the category $\Sp$ is \textit{stable}, the dualizable objects
in it are closed under finite (co)limits, and their dimension satisfies the additivity property
\[
    \dim(X \cup_Z^h Y) = \dim(X) + \dim(Y) - \dim(Z).
\]
We thus recover the classical Euler characteristic by restricting to suspension spectra of finite spaces (see \cite{ponto2014linearity} for more on this perspective).

Similarly, the deep reason for the finite dimensionality of $K(n)$-cohomology of $\pi$-finite spaces is the \textit{$\infty$-semiadditivity} of the categories $\Sp_{K(n)}$ of $K(n)$-local spectra in the sense of Hopkins-Lurie (\cite{HLAmbiKn}). This roughly means that the limit and colimit of a diagram in $\Sp_{K(n)}$ indexed over a $\pi$-finite space are canonically isomorphic. This remarkable property has many consequences, among them that the dualizable objects in $\Sp_{K(n)}$ are closed under $\pi$-finite (co)limits. Since $\Sp_{K(n)}$ is also stable, the dualizable objects are closed also under finite (co)limits. In particular, for every $X\in \Spc^{\psmall}$, the $K(n)$-localized suspension spectrum $X\ \widehat{\otimes}\ \SS_{K(n)}$ is dualizable in $\Sp_{K(n)}$, and we can compute its dimension
\[
    \dim(X\ \widehat{\otimes}\ \SS_{K(n)}) \in \pi_0(\SS_{K(n)}),
\]
and this dimension function is additive as before. The ring $\pi_0(\SS_{K(n)})$ is a nilpotent thickening of $\ZZ_p$, and while work of Carmeli and Yuan shows that exotic nilpotents \textit{can} show up in this way, ignoring the nilpotent part we get precisely the integer
\[
    \chi_n(X) := 
    \dim_{\kappa_n} K(n)^{\mathrm{even}}(X) -
    \dim_{\kappa_n} K(n)^{\mathrm{odd}}(X)
    \qin \ZZ \sseq \ZZ_p.
\] 
To summarize, the Euler-Morava characteristics of a space can be defined in terms of the symmetric monoidal dimension of dualizable objects in $\Sp_{K(n)}$, which are closed under $\pi$-finite colimits by the higher semiadditivity of these categories. The homotopy cardinality of Baez and Dolan turns out to be also closely related to higher semiadditivity. Just like in an ordinary semiadditive category morphisms can be added, in a higher semiadditive category morphisms can be integrated along $\pi$-finite spaces. In particular, integrating the identity morphism of an object over a $\pi$-finite space $X$, can be thought of as multiplication by the ``cardinality of $X$'', which is an element of the ring $\pi_0(\SS_{K(n)})$. We thus get a sequence of such cardinalities 
\[
    |X|_n \in \pi_0(\SS_{K(n)}).
\]
Moreover, the $K(n)$-local symmetric monoidal dimension of a $\pi$-finite $p$-space $X$ can be formulated in these terms by the formula
\[
    \dim (X \otimes \SS_{K(n)})) = |LX|_n.
\]

For height $n=0$, i.e. for the category of rational spectra $\Sp_{\QQ} = \Sp_{K(0)}$, this higher semiadditive notion of cardinality is precisely the Baez-Dolan homotopy cardinality
\[
    |X|_0 = |X| \in \QQ = \SS_{\QQ}.
\]
For higher heights, Lurie's machinery of tempered ambidexterity can be applied to show that the integral part of $|X|_n$ coincides with $|L^nX|_0$. The combination of all of the above puts the Euler-Morava characteristics and the Baez-Dolan homotopy cardinality on equal footing. Namely, the sequence 
\[
    |X|,\ \chi_0(X),\ \chi_1(X),\ \chi_2(X),\ \chi_3(X),\ \dots  
\]
coincides up to nilpotents with the sequence
\[
    |X|_0,\ \ |X|_1,\ \ |X|_2,\ \ |X|_3, \ \  |X|_4, \ \ \dots
\]

We can thus roughly interpret \Cref{Intro_Morava_Euler_Cont} as saying that the sequence of higher semiadditive cardinalities of a $\pi$-finite $p$-space $X$ is $\ell$-adically continuous in the chromatic height $n$.

\subsubsection{Outline}

In \Cref{Sec:Proofs}, we prove \Cref{Intro_Morava_Euler_Cont}, from which \Cref{Intro_Generalzied_Homotopy_Cardinality} follows as explained above. The proof divides naturally in two: showing that $\lfun_X(n)$ is $\ell$-adically continuous, and showing that the extrapolation to $n=-1$ is $|X|$. 

In \Cref{Sec:Examples}, we consider three families of examples. First, we show that for loop-spaces the function $\lfun_X(n)$ does not see the Postnikov invariants of $X$ and consequently has a very simple combinatorial formula. Second, we use the results of \cite{hopkins2000generalized} to analyse the case $X = BG$ where $G$ is a general finite (not necessarily $p$-)group. Third, we construct a certain family of $\pi$-finite $p$-spaces with non-trivial Postnikov invariants and give a formula for $\lfun_X(n)$ in terms of $p$-binomial coefficients.

In \Cref{Mysteries}, we return to the heuristic comparison of $|BG|$ and $\chi(BG)$. We reinterpret this in terms of $\ell$-adic continuity of the generalized homotopy cardinality, and set up a general framework of $\ell$-adically continuous resolutions extending this phenomena to higher $\pi$-finite groups.

\subsubsection{Acknowledgments}

Most of the results presented here were obtained during my first Ph.D project back in 2017 under the supervision of Tomer Schlank (who was the one to suggest to ``somehow use Morava $K$-theories''). they were subsequently announced in the traschromatic homotopy theory conference held in Regensburg that year and presented in several talks since then, but various distractions delayed the writing process. It should be noted that at the time, Lurie's \cite{Lurie_Ell3} was not published yet and much of the general higher semiadditive context was still missing as well. Thus, in a sense, the present time is more ripe for communicating these ideas, which may partially excuse the long delay. 

I am deeply grateful to Tomer Schlank for his help with this project, as well as his mentoring over the passing years. I also thank him for useful comments on an earlier draft of this paper. 
I wish to extend my gratitude also to Maxime Ramzi for reviving  my interest in this project through many stimulating conversations motivating me to finally write this paper. I would also like to thank John Baez for his numerous excellent expositions of these and other higher categorical ideas, that got me into thinking about this project in the first place. Finally, I would like to thank all the present and former members of the seminarak group for stimulating conversations and encouragement, with special thanks to Shai Keidar and Shaul Ragimov for providing the elegant argument in the proof of the second part of \Cref{Prob_Prime2p}.

\section{Morava-Euler Characteristics}\label{Sec:Proofs}

In this section we will prove \Cref{Intro_Morava_Euler_Cont}, from which \Cref{Intro_Generalzied_Homotopy_Cardinality} follows as we already explained above. 
That is, we need to show that given a $\pi$-finite $p$-space $X$, the function $\lfun_X(n) = |[\TT^n, X]|$ extends to an $\ell$-adically continuous function $\hat{\lfun} \colon \ZZ_\ell \to \QQ_\ell$ with $\hat{\lfun}_X(-1) = |X|.$ We shall apply the following criterion:

\begin{thm}[Mahler {\cite{mahler1958interpolation}}]
    A function $f\colon \NN \to \QQ_\ell$ extends to a (necessarily unique) continuous function $\hat{f}\colon \NN \to \QQ_\ell$, if and only if its inverse binomial transform 
    \[
        \cl{f}(n) = \sum_{k=0}^n (-1)^{n-k}\binom{n}{k} f(k)
    \]
    satisfies $\cl{f}(n) \to n$ in the $\ell$-adic norm when $n\to \infty$. Furthermore, in this case the value of $\hat{f}$ on $a\in \ZZ_\ell$ is given by 
    \[
        \hat{f}(a) = \sum_{k=0}^\infty \binom{a}{k} \cl{f}(k).
    \]
\end{thm}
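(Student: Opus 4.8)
The plan is to reduce Mahler's criterion to the assertion that the binomial polynomials $\{\binom{\cdot}{k}\}_{k\ge 0}$ form an orthonormal Schauder basis of the $\QQ_\ell$-Banach space $C(\ZZ_\ell,\QQ_\ell)$ of continuous functions with the supremum norm; both implications fall out of this. (I read the statement in its standard form: the hypothesis is $\cl f(n)\to 0$ in the $\ell$-adic norm, and $\hat f$ is an extension to $\ZZ_\ell$.) Write $\Delta$ for the forward difference operator $(\Delta g)(x)=g(x+1)-g(x)$, so that $\cl f(n)=(\Delta^n f)(0)$, and recall the elementary binomial inversion: the lower-triangular integer matrices $\bigl(\binom{n}{k}\bigr)$ and $\bigl((-1)^{n-k}\binom{n}{k}\bigr)$ are mutually inverse, so the identity defining $\cl f$ is equivalent to $f(n)=\sum_{k\le n}\binom{n}{k}\cl f(k)$. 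Two $\ell$-adic facts are used repeatedly: for $a\in\ZZ_\ell$ one has $\binom{a}{k}\in\ZZ_\ell$ (the integer-valued polynomial $\binom{x}{k}$ carries the dense subset $\ZZ$ into $\ZZ$, and $\ZZ_\ell$ is closed), so that $\lVert\binom{\cdot}{k}\rVert_\infty=1$ and the transform $g\mapsto\cl g$ is norm-nonincreasing; and, since $\QQ_\ell$ is complete and non-archimedean, a series converges uniformly as soon as its terms tend to $0$. Hence whenever $c_k\to 0$ in $\QQ_\ell$, the series $\sum_k c_k\binom{a}{k}$ converges uniformly in $a\in\ZZ_\ell$ to a continuous function.

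Granting this, the sufficiency direction is formal. If $\cl f(n)\to 0$, set $\hat f(a):=\sum_{k\ge 0}\cl f(k)\binom{a}{k}$; the series converges uniformly, so $\hat f\in C(\ZZ_\ell,\QQ_\ell)$. Evaluating at $a=n\in\NN$ truncates the sum at $k=n$, and binomial inversion gives $\hat f(n)=\sum_{k\le n}\binom{n}{k}\cl f(k)=f(n)$, so $\hat f$ extends $f$; uniqueness of a continuous extension is immediate since $\NN$ is dense in $\ZZ_\ell$. This simultaneously establishes the displayed formula for $\hat f(a)$.

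The content lies in the necessity direction, which I would reduce to the single claim: for every continuous $g\colon\ZZ_\ell\to\QQ_\ell$ one has $\cl g(n)\to 0$. (Indeed $\cl f$ depends only on $f|_\NN=\hat f|_\NN$, so $\cl f=\cl{\hat f}$; now apply the claim to $g=\hat f$.) After rescaling by a power of $\ell$ we may assume $g$ is $\ZZ_\ell$-valued. The one nontrivial input is the divisibility estimate $v_\ell\binom{\ell^m}{k}\ge 1$ for $0<k<\ell^m$, which is immediate from Legendre's formula — $v_\ell\binom{\ell^m}{k}=\bigl(s_\ell(k)+s_\ell(\ell^m-k)-1\bigr)/(\ell-1)$, a positive rational since both base-$\ell$ digit sums are at least $1$, hence a positive integer. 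Using it: by uniform continuity on the compact group $\ZZ_\ell$, pick $m$ with $x\equiv y\pmod{\ell^m}\Rightarrow g(x)\equiv g(y)\pmod{\ell}$; then in $\Delta^{\ell^m}g(x)=\sum_{k=0}^{\ell^m}(-1)^{\ell^m-k}\binom{\ell^m}{k}g(x+k)$ all terms with $0<k<\ell^m$ vanish modulo $\ell$, while the two outer terms combine to $\bigl((-1)^{\ell^m}+1\bigr)g(x)\bmod\ell$, which is $0$ for every prime $\ell$ (the sign when $\ell$ is odd, the factor $2$ when $\ell=2$). Thus $g_1:=\tfrac1\ell\Delta^{\ell^m}g$ is again a continuous map $\ZZ_\ell\to\ZZ_\ell$, and since $\Delta^n$ commutes with $\Delta^{\ell^m}$ we get $\cl{g_1}(n)=\tfrac1\ell\,\cl g(n+\ell^m)$. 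An induction on $r$ — replacing $g$ by $g_1$ and iterating — then yields $v_\ell\bigl(\cl g(n)\bigr)\ge r$ for all $n$ beyond a bound depending on $r$, i.e.\ $\cl g(n)\to 0$.

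I expect the only genuine obstacle to be this last step, and within it the bookkeeping that each application of $\Delta^{\ell^m}$ gains exactly one power of $\ell$ while preserving continuity and integrality; the estimate $v_\ell\binom{\ell^m}{k}\ge 1$ and everything else are routine. One could alternatively derive $\cl g(n)\to 0$ from the non-archimedean Stone--Weierstrass theorem — polynomials are dense in $C(\ZZ_\ell,\QQ_\ell)$, and $\cl P$ vanishes beyond $\deg P$ — but the difference-operator argument above is self-contained, so I would present that.
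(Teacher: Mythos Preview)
The paper does not prove this statement at all: Mahler's theorem is quoted as a black-box result with a citation to \cite{mahler1958interpolation}, and the paper immediately proceeds to apply it. So there is no ``paper's own proof'' to compare against.

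Your argument is correct and is essentially the standard modern proof of Mahler's theorem. You also correctly read past two typos in the paper's statement (the target of $\hat f$ should be $\ZZ_\ell\to\QQ_\ell$, and the condition should be $\cl f(n)\to 0$, not $\to n$). The sufficiency direction is routine once one knows $\binom{a}{k}\in\ZZ_\ell$ and that ultrametric series converge as soon as their terms tend to zero; your treatment is fine. For necessity, your reduction via $\Delta^{\ell^m}g\equiv 0\pmod{\ell}$ is the right idea and the details check out: Kummer/Legendre gives $v_\ell\binom{\ell^m}{k}\ge 1$ for $0<k<\ell^m$, the two boundary terms cancel mod $\ell$ by uniform continuity and the parity/sign observation, and then iterating on $g_1=\ell^{-1}\Delta^{\ell^m}g$ shows $v_\ell(\cl g(n))\to\infty$. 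One small remark: your induction implicitly allows the modulus $m$ to change at each step (since it depends on the uniform continuity of the new function $g_r$), which is fine for the conclusion $\cl g(n)\to 0$ but worth making explicit. The alternative route you mention via non-archimedean Stone--Weierstrass is also standard and arguably cleaner, but your self-contained argument is perfectly adequate.
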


Our strategy is to verify this criterion for $\lfun_X$ of a $\pi$-finite $p$-space $X$, using induction on the Postnikov tower of $X$ and an explicit analysis of the case $X = B^d C_p$. 

\subsubsection{Categorifying Mahler's theorem}

To carry out this strategy, we will need to ``categorify'' all the ingredients in Mahler's theorem. 

\begin{defn}
    An order preserving injection $[k] \into [n]$ induces a projection of tori $\TT^n \to \TT^k$. We call these the \textit{standard projections}.
    We say that an element of $L^n X$ is \textit{old} if it is induced from an element $\TT^k \to X$ by a standard projection $\TT^n \to \TT^k \to X$ for some $k<n$, and \textit{new} otherwise. We let 
    \[
        LX = L_\old X \ \sqcup\  L_\new X 
    \]    
    be the partition of $L X$ into old (i.e. constant) and new elements respectively. The new elements in $L^n X$ are thus $L^n_\new X \sseq L^n X$ and the old ones are the complement. 
\end{defn}

\begin{war}
    While we may interpret the notation for the space of new elements $L^n_\new X \sseq L^n X$ as the $n$-fold iterate of the functor $L_\new$ applied to $X$, the analogous claim is not true for the \textit{old} elements in $L^n X$. In particular, we reserve the notation $L^n_\old X$ for the the $n$-fold iterate of the functor $L_\old$ applied to $X$, which for $n\ge 1$ is usually a proper subspace of the space of old elements $L^n X \smallsetminus L_\new^n X$.
\end{war}

Note that the standard projections $\TT^n \to \TT^k$ admit \textit{sections}, so for every $X$, the corresponding maps $L^k X \to L^n X$ admit \textit{retracts}. In particular, the induced maps $\pi_0(L^k X) \to \pi_0 (L^n X)$ are \textit{injective}. Furthermore, we would like to show that each element of $\pi_0(L^nX)$ is induced from a unique new element in $\pi_0(L^kX)$ for a unique minimal subset of coordinates $[k] \into [n]$. Since we are dealing with factorizations up to homotopy, this is not completely obvious. We shall use the following:

\begin{lem}\label{lem:join}
    Let $A,B,C \in \Spc$ be spaces with a map
    \[
        f \colon A\times B \times C \too X.
    \]
    If $f$ factors up to homotopy through each of the projections 
    \[
    A \times C \longleftarrow 
    A\times B \times C \too  
    B \times C,
    \]
    then it factors up to homotopy through the projection $A\times B \times C \to C$.
\end{lem}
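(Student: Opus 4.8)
The statement is about factorizations up to homotopy of a map $f\colon A\times B\times C\to X$. The key reformulation is that "$f$ factors up to homotopy through the projection $A\times B\times C\to C$" is equivalent to saying that the adjoint map $C\to X^{A\times B}$ lands (up to homotopy) in the subspace of constant maps, i.e. in the image of the diagonal $X\to X^{A\times B}$ — or, better for a join-type statement, that $f$ is "constant in the $A$ and $B$ directions simultaneously." The plan is to first translate the hypotheses into the same language: $f$ factoring through $A\times C$ means $f$ is constant in the $B$ direction (parametrized over $A\times C$), and factoring through $B\times C$ means $f$ is constant in the $A$ direction (parametrized over $B\times C$). The goal is to combine these into simultaneous constancy in both directions over $C$.

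Let me set this up via mapping spaces, working fiberwise over $C$. Fix the adjoint $\tilde f\colon C\to \Map(A\times B, X)$; since everything can be done fiberwise over $C$, and the claimed factorization can be checked on $\pi_0$ after passing to mapping spaces (the statement "factors through $C$" for $\tilde f\colon C\to \Map(A\times B,X)$ means $\tilde f$ is homotopic to a map landing in $\Map(\pt,X)=X$, i.e. its composite with $\Map(A\times B,X)$ is null in the relevant sense — precisely, that $\tilde f$ lifts along $X\to\Map(A\times B,X)$). The hypotheses say $\tilde f$ lifts along $\Map(A,X)\to\Map(A\times B,X)$ (constancy in $B$) and along $\Map(B,X)\to\Map(A\times B,X)$ (constancy in $A$). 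So the real content is the following pullback-type assertion about the cospan of restriction maps
\[
\Map(A,X)\too \Map(A\times B,X)\longleftarrow \Map(B,X),
\]
namely that a map $C\to \Map(A\times B,X)$ that lifts (up to homotopy) to each of the two corners already lifts to their "intersection" $\Map(\pt,X)=X$. This would follow if the square with corners $X$, $\Map(A,X)$, $\Map(B,X)$, $\Map(A\times B,X)$ were a pullback (even just $\pi_0$-surjective enough on the relevant components), but that is false in general — so the argument must genuinely use that the two lifts are available over the same source $C$, not a formal pullback statement.

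Here is the mechanism I expect to work. Use that $A\times B$ is the pushout (homotopy pushout) of $A\leftarrow A\sqcup B? $ — no; rather, use the comparison between $A\times B$ and the join-related diagram, or more simply argue directly: a lift of $\tilde f$ through $\Map(B,X)$ gives, after composing with the collapse $A\times B\to B$, a homotopy between $f$ and $f\circ(\mathrm{collapse}_A)$; similarly the other lift gives a homotopy $f\simeq f\circ(\mathrm{collapse}_B)$. Restricting $f$ to $A\times\{b_0\}\times C$ and to $\{a_0\}\times B\times C$ and to $\{a_0\}\times\{b_0\}\times C$, both restrictions are identified (up to the chosen homotopies) with the constant-in-that-direction pieces, and one chases that $f$ restricted to $A\times B\times C$ is homotopic to its restriction to $\{a_0\}\times\{b_0\}\times C$ pulled back along the projection. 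The cleanest way to organize this is probably: the two factorizations exhibit $\tilde f\colon C\to\Map(A\times B,X)$ as factoring through the pullback $\Map(A,X)\times_{\Map(A\times B,X)}\Map(B,X)$, and then I would identify this pullback's relevant path components. The honest obstacle is that this homotopy pullback is \emph{not} $X$; however, its image under the further restriction to $\Map(\pt,X)=X$ (evaluation at $(a_0,b_0)$) followed by the inclusion back is what we need, and the point is that on the component containing $\tilde f(C)$ the two given lifts can be glued.

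I expect the main obstacle — and the place where the argument has to be careful rather than formal — is exactly producing a \emph{single coherent} homotopy from the two separately-given homotopies (constancy in $A$ and constancy in $B$). A priori they need not be compatible on the overlap $\{a_0\}\times\{b_0\}\times C$, but since that overlap is a point in the $A\times B$ direction, any two homotopies there differ by a loop, and one can modify one of the chosen lifts/homotopies to make them agree; then the two homotopies glue over the pushout $A\times B\leftarrow (A\vee B)\to \pt$... This gluing step, together with checking it descends correctly, is the technical heart. I would therefore structure the proof as: (1) adjoint/mapping-space reformulation and reduction to the cospan of restriction maps; (2) extract the two homotopies from the hypotheses; (3) correct them to agree on the common basepoint fiber over $C$; (4) glue to get the desired factorization through $C$, and (5) check this is indeed a homotopy factorization of the original $f$. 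Steps (3)–(4) are where the real work lies; the rest is bookkeeping.
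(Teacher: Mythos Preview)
Your proposal actually contains a complete and elementary argument, but you do not extract it and instead wander into a more complicated plan that does not work as written.

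The sentence in the middle of your plan is already the whole proof: restrict everything to the slice $A\times\{b_0\}\times C$ (any $b_0\in B$; if $B=\varnothing$ the statement is vacuous). The homotopy $f\simeq g\circ p_{A\times C}$ restricts to $f|_{A\times\{b_0\}\times C}\simeq g$, and the homotopy $f\simeq h\circ p_{B\times C}$ restricts to $f|_{A\times\{b_0\}\times C}\simeq h(b_0,-)\circ p_C$. Concatenating, $g\simeq h(b_0,-)\circ p_C$ already factors through $C$, hence so does $f\simeq g\circ p_{A\times C}$. There is no coherence issue and nothing to glue: you only ever use \emph{one} of the two homotopies on the full space and the other on a slice. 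Your steps~(3)--(4), by contrast, are problematic: the ``pushout $A\times B\leftarrow A\vee B\to \pt$'' you propose to glue over computes the smash product $A\wedge B$, not anything that helps produce a factorization through $C$, and the homotopy pullback $\Map(A,X)\times_{\Map(A\times B,X)}\Map(B,X)$ you consider earlier is, as you note, not $X$, so that route genuinely requires more.

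For comparison, the paper takes a different line: it reduces to $C=\pt$, observes that the two factorizations through $A$ and $B$ assemble to a map out of the homotopy pushout $A\star B = A\cup_{A\times B}B$, and then invokes the standard fact that the inclusions $A\hookrightarrow A\star B$ and $B\hookrightarrow A\star B$ are nullhomotopic; hence $f$ itself is nullhomotopic. The general $C$ is handled by multiplying the join pushout square by $C$. Your slice argument is more elementary (no join, no external citation) and works uniformly in $C$; the paper's argument is conceptually cleaner in that it identifies the universal target $A\star B$ but relies on an outside fact. Either is fine---you should just recognize that your restriction idea already finishes the job and drop the gluing apparatus.
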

\begin{proof}
    Consider first the case $C = \pt$. If $f\colon A\times B \to X$ factors through both projections 
    \[
        A \longleftarrow A\times B \too B
    \] 
    up to homotopy, then it factors through their homotopy pushout given by the join:
    \[\begin{tikzcd}
    	{A \times B} & B \\
    	A & {A\star B}
    	\arrow[from=1-1, to=2-1]
    	\arrow[from=2-1, to=2-2]
    	\arrow[from=1-1, to=1-2]
    	\arrow[from=1-2, to=2-2]
    	\arrow["\lrcorner"{anchor=center, pos=0.125, rotate=180}, draw=none, from=2-2, to=1-1]
    \end{tikzcd}\]
    The claim now follows from the standard fact that the bottom and right maps are nullhomotopic (see, e.g., \cite[Proposition 2.21.]{devalapurkar2021james}). The claim for a general $C$ follows from the above special case by multiplying the pushout square defining $A\star B$ above with $C$ and observing that it is still a pushout square.
\end{proof}

Consequently,

\begin{prop}
    Let $X$ be a space and $n\ge 0$. Each element of $\pi_0(L^n X)$ is induced from a unique new element of $\pi_0(L^k X)$ along a unique inclusion $[k] \into [n]$.
\end{prop}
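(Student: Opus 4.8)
The plan is to prove existence and uniqueness of the claimed factorization separately, leveraging \Cref{lem:join} as the crucial input for uniqueness. First, for \emph{existence}: given $[f] \in \pi_0(L^n X)$ represented by $f\colon \TT^n \to X$, consider the collection of subsets $S \subseteq [n]$ such that $f$ factors up to homotopy through the standard projection $\TT^n \to \TT^S$ (where $\TT^S$ denotes the torus on the coordinates in $S$). This collection is nonempty since $S = [n]$ belongs to it. I would choose $S$ minimal with respect to inclusion; by construction the induced class in $\pi_0(L^{|S|}X)$ is new (if it factored further through a standard projection $\TT^S \to \TT^{S'}$ with $S' \subsetneq S$, then $f$ would factor through $\TT^{S'}$, contradicting minimality — here one should check that the standard projections $\TT^S \to \TT^{S'}$ arising from inclusions $S' \subseteq S$ compose correctly, which is routine). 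This gives a new element inducing $[f]$ along the inclusion $S \hookrightarrow [n]$.

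For \emph{uniqueness}, the key point is that the minimal such $S$ is unique, i.e. that the collection of ``good'' subsets $S$ (those through which $f$ factors) is closed under intersection; then its minimum is $\bigcap S$ over all good $S$, canonically determined by $[f]$. To see closure under intersection, suppose $f$ factors through $\TT^{S_1}$ and through $\TT^{S_2}$. Write $[n]$-coordinates in three groups: $A$ for coordinates in $S_1 \setminus S_2$, $B$ for coordinates in $S_2 \setminus S_1$, and $C$ for the rest (which includes $S_1 \cap S_2$ and the coordinates outside $S_1 \cup S_2$); so $\TT^n = \TT^A \times \TT^B \times \TT^C$. Factoring through $\TT^{S_1}$ means, after discarding coordinates, factoring through $\TT^A \times \TT^C$ (projecting away $B$), and similarly factoring through $\TT^{S_2}$ means factoring through $\TT^B \times \TT^C$. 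Applying \Cref{lem:join} with these $A$, $B$, $C$, we conclude $f$ factors through $\TT^C$, hence through $\TT^{S_1 \cap S_2}$ (which is obtained from $\TT^C$ by a further projection). Thus the good subsets form a meet-semisublattice of $2^{[n]}$ containing $[n]$, so they have a least element $S_{\min}$, and any new element inducing $[f]$ must come from a good subset that is also minimal, forcing it to equal $S_{\min}$; the new element over $S_{\min}$ is then unique because the map $\pi_0(L^{|S_{\min}|}X) \to \pi_0(L^n X)$ is injective (it admits a retract, as noted before the lemma).

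I expect the main obstacle to be bookkeeping rather than conceptual: one must be careful that ``factors through $\TT^S$ up to homotopy'' is a well-defined condition on the homotopy class $[f] \in \pi_0(L^n X)$ and not just on a point-set representative, and that the various standard projections compose associatively and compatibly so that ``factoring through $\TT^{S_1}$'' genuinely translates to ``factoring through the $B$-projection $\TT^A\times\TT^B\times\TT^C \to \TT^A \times \TT^C$'' in the precise form required by \Cref{lem:join}. The only other subtlety is verifying that a class which does not factor through any proper standard sub-projection is exactly what was defined to be ``new'' — this is essentially definitional but worth stating explicitly. Modulo this, the argument is just: existence by choosing a minimal good subset, uniqueness by closure of good subsets under intersection (via the join lemma) plus injectivity of the retracted inclusions.
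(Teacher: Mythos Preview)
Your approach is exactly the paper's: show via \Cref{lem:join} that the collection of subsets $S \subseteq [n]$ through which $f$ factors is closed under intersection, deduce a unique minimal such $S$, and use injectivity of the retracted maps $\pi_0(L^{|S|}X) \to \pi_0(L^n X)$ for uniqueness of the preimage. The paper's proof is the same argument compressed into three sentences.

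One small slip to fix: from ``$f$ factors through $\TT^C$'' you cannot conclude ``$f$ factors through $\TT^{S_1\cap S_2}$'' merely because $S_1\cap S_2 \subseteq C$ --- factoring through a projection to a \emph{larger} torus is weaker, not stronger. The clean repair is to first note that $f$ already factors through $\TT^{S_1\cup S_2}$ (since it factors through $\TT^{S_1}$), then apply \Cref{lem:join} to the resulting map $\TT^{S_1\cup S_2}\to X$ with $A=S_1\smallsetminus S_2$, $B=S_2\smallsetminus S_1$, and $C=S_1\cap S_2$; now $A\sqcup B\sqcup C = S_1\cup S_2$ and the lemma gives the desired factorization through $\TT^{S_1\cap S_2}$ directly.
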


\begin{proof}
    By \Cref{lem:join}, if a map $\gamma \colon \TT^n \to X$ factors through two subsets of coordinates of $[n]$, then it factors through their intersection. Hence, there is a minimal $[k] \into [n]$ for which $\gamma$ is in the image of the restriction $\pi_0(L^k X) \to \pi_0(L^n X)$, hence new, and since this map is an inclusion, the pre-image is unique.
\end{proof}

We get that the collection of subsets  $\pi_0 (L_\new^n X) \sseq \pi_0 (L^n X)$ categorifies the binomial transform of the function $\lfun_X(n)=|\pi_0(L^nX)|$. 

\begin{cor}\label{fbar_new}
    For every $\pi$-finite $X$, we have $\cl{\lfun}_X(n) = |\pi_0 (L^n_\new X)|$ for all $n\ge 0$.
\end{cor}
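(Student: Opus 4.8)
The plan is to combine the preceding proposition with the elementary fact that the inverse binomial transform inverts the binomial transform. The proposition says that, for each $n \ge 0$, the set $\pi_0(L^n X)$ is partitioned as
\[
    \pi_0(L^n X) \;=\; \bigsqcup_{k=0}^{n}\ \bigsqcup_{[k]\into[n]}\ \pi_0(L^k_\new X),
\]
where the inner disjoint union runs over the order-preserving injections $[k]\into[n]$ --- equivalently, over the $k$-element subsets of the $n$ coordinates, of which there are $\binom{n}{k}$ --- and the copy of $\pi_0(L^k_\new X)$ attached to a given injection sits inside $\pi_0(L^n X)$ via the associated standard projection $\TT^n \to \TT^k$. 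That this is a genuine partition is exactly the existence-and-uniqueness statement in the proposition: each class $\gamma \in \pi_0(L^n X)$ determines a unique minimal subset of coordinates through which it factors, together with a unique new class on that subset. Since $X$ is $\pi$-finite, each $L^k X$ is a $\pi$-finite space, so every set appearing above is finite; in particular $\cl{\lfun}_X(n)$ and $|\pi_0(L^n_\new X)|$ are both well-defined.

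Taking cardinalities in the displayed decomposition, I obtain
\[
    \lfun_X(n) \;=\; \sum_{k=0}^{n} \binom{n}{k}\,\bigl|\pi_0(L^k_\new X)\bigr|,
\]
so that $\lfun_X$ is the binomial transform of the function $g \colon k \mapsto |\pi_0(L^k_\new X)|$. Applying the standard binomial inversion formula --- the same one appearing in the statement of Mahler's theorem above, namely that $f(n) = \sum_{k=0}^{n}\binom{n}{k} g(k)$ for all $n$ implies $g(n) = \sum_{k=0}^{n}(-1)^{n-k}\binom{n}{k} f(k)$ --- I conclude that
\[
    \cl{\lfun}_X(n) \;=\; \sum_{k=0}^{n} (-1)^{n-k}\binom{n}{k}\,\lfun_X(k) \;=\; g(n) \;=\; |\pi_0(L^n_\new X)|
\]
for all $n \ge 0$, which is the claim.

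All the genuine work has already been carried out in \Cref{lem:join} and in the proposition extracting the unique minimal coordinate subset through which a torus map factors; the corollary is a purely formal consequence. The only point requiring a little care is the indexing --- checking that the coproduct really is indexed by the $k$-element subsets of the $n$ coordinates of $\TT^n$, so that its cardinality contributes the factor $\binom{n}{k}$ --- and I do not expect any obstacle beyond this bookkeeping.
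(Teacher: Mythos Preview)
Your proof is correct and follows essentially the same route as the paper: both start from the partition of $\pi_0(L^n X)$ established in the preceding proposition, take cardinalities to express $\lfun_X$ as the binomial transform of $k \mapsto |\pi_0(L^k_\new X)|$, and then invoke binomial inversion. Your version is slightly more explicit about the finiteness hypothesis and the indexing, but there is no substantive difference.
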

\begin{proof}
    By the above, each element of $\pi_0(L^n X)$ is induced from a unique new element of $\pi_0(L^k X)$ along a unique $[k] \into [n]$.
    We thus get,
    \[
        \pi_0(L^n X) \simeq \bigcup_{k=0}^n \binom{[n]}{k} \times \pi_0(L_\new^k X),
    \]
    from which follows that
    \[
        \lfun_X(n) = |\pi_0(L^n x)| = \sum_{k=0}^n \binom{n}{k}|L_\new^k X|,
    \]
    and hence
    \[
        \cl{\lfun}_X(n) = \sum_{k=0}^n (-1)^{n-k}\lfun_X(n) = |L_\new^n X|. 
    \]
\end{proof}

We therefore need to show that the number of new maps $\TT^n \to X$ is divisible by a high power of $\ell$. We shall now categorify this property as well. Since $X$ is a $\pi$-finite $p$-space, we have
\[
    L^n X = \Map(B\ZZ^n , X) = \Map(B\ZZ_p^n , X).
\]

Since $\ell \mid (p-1)$ we have an embedding $\ZZ/\ell \into \FF_p^\times$, and $\FF_p^\times$ further embeds into $\ZZ_p^\times$ by Teichmuller lifts. Thus, the group $(\ZZ/\ell)^n$ acts on $L^n X$ via the embedding as diagonal matrices
\[
    (\ZZ/\ell)^n \into (\ZZ_p^\times)^n \le \GL_n(\ZZ_p).
\]
We let $G_n \le \GL_n(\ZZ_p)$ be the image of this embedding. 

\begin{lem}
    For every space $X$, the action of $G_n$ on $L^n X$ preserves $L^n_\new X \sseq L^n X$.
\end{lem}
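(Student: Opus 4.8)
The plan is to show that the $G_n$-action, which permutes the elements of $\pi_0(L^nX)$, sends new elements to new elements and old elements to old elements; since "new" is the complement of "old" inside $L^nX$, it suffices to handle either one, and the old elements are the more tractable to describe. Recall that $G_n \le \GL_n(\ZZ_p)$ is the image of $(\ZZ/\ell)^n$ under the diagonal embedding via Teichmüller lifts, so an element $g \in G_n$ acts on $B\ZZ_p^n$ by the self-map induced by a diagonal matrix $\diag(u_1,\dots,u_n)$ with each $u_i \in \ZZ_p^\times$, and hence acts on $L^nX = \Map(B\ZZ_p^n, X)$ by precomposition with this self-equivalence of $B\ZZ_p^n$.

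The key observation is that this action is compatible with the standard projections. First I would note that a diagonal matrix $\diag(u_1,\dots,u_n)$ commutes (up to the obvious homotopy) with every standard projection $\TT^n \to \TT^k$ coming from an order-preserving injection $\iota\colon [k]\hookrightarrow[n]$: the composite $\TT^n \xrightarrow{\diag(u)} \TT^n \xrightarrow{\pi_\iota} \TT^k$ agrees with $\TT^n \xrightarrow{\pi_\iota} \TT^k \xrightarrow{\diag(u')} \TT^k$, where $u' = (u_{\iota(1)},\dots,u_{\iota(k)})$, and $\diag(u')$ is an equivalence of $\TT^k$. Consequently, if $\gamma\colon \TT^n \to X$ is old — say $\gamma \simeq \beta \circ \pi_\iota$ for some $\beta\colon \TT^k \to X$ with $k<n$ — then $g\cdot\gamma = \gamma\circ\diag(u) \simeq \beta\circ\pi_\iota\circ\diag(u) \simeq \beta\circ\diag(u')\circ\pi_\iota = (\beta\circ\diag(u'))\circ\pi_\iota$, which again factors through the same standard projection $\pi_\iota$, hence is old. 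Thus $G_n$ preserves the subspace $L^nX \smallsetminus L^n_\new X$ of old (constant) elements of $L^nX$.

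Since the $G_n$-action on $L^nX$ is by equivalences, it preserves $\pi_0$ and the partition into old and new components; as old is preserved, so is its complement $L^n_\new X$. The only point requiring a little care is the coherence of the homotopies — i.e. upgrading the pointwise statement "$g\cdot\gamma$ is old whenever $\gamma$ is old" to the statement that the equivalence $g\cdot(-)\colon L^nX \to L^nX$ carries the subspace $L^n_\new X$ into itself — but this is immediate once we observe that being old is a condition on the connected component of $\gamma$ in $\pi_0(L^nX)$ (an element is old iff its class lies in the image of some $\pi_0(L^kX)\to\pi_0(L^nX)$), and $g$ acts by a bijection on $\pi_0(L^nX)$ which, by the computation above, maps the subset of old components into itself; being a bijection of a finite set, it restricts to a bijection on old components and hence on the complementary new components. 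I expect the main (mild) obstacle to be exactly this bookkeeping: making sure one works at the level of $\pi_0$, where "old" is literally a subset, rather than getting tangled in homotopy-coherence issues at the space level.
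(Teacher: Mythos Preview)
Your argument is correct and is precisely an unpacking of the paper's one-line proof, which reads in full: ``This follows immediately from the fact that the standard projections are $G_n$-equivariant.'' The only quibble is your appeal to finiteness of $\pi_0(L^nX)$ at the end---the lemma is stated for \emph{arbitrary} $X$, so instead observe that $g^{-1}\in G_n$ also preserves old elements by the same computation, whence $g$ must send new to new.
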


\begin{proof}
    This follows immediately from the the fact that the standard projections are $G_n$-equivariant.
\end{proof}

Consequently, by the orbit-stabilizer theorem, it suffices to show that an element of $\pi_0(L^n_\new X)$ can't have a very large stabilizer $G \le G_n$. Namely, that the codimension $\dim (G_n/G)$ is bounded from below by a function of $n$ that grows to infinity. 

\subsubsection{$\ell$-Adic continuity}

To facilitate induction, we shall prove a sharper result, that identifies for every sufficiently large $G \le G_n$ a specific ``degenerate index'' $1\le i \le n$ for all elements of $\pi_0(L^n X)$ that are fixed by $G$, depending only on the truncation level of $X$. For this, we shall use the action of $G_n$ on the \textit{space} $L^n X$ and analyze its \textit{homotopy} fixed point. Note that since any subgroup $G\le G_n$ is an $\ell$-group and the homotopy groups of $L^n X$ are $p$-groups, we have a natural isomorphism
\[
    \pi_0((L^nX)^{hG}) \iso (\pi_0(L^n X))^G.
\]
We shall use this repeatedly.

\begin{defn}
    For $G \le G_n$ and $d\in \NN$, we say that an index $1\le i\le n$ is \textit{$d$-degenerate} under $G$, if $G$ is not contained in any subgroup of the form
    \[
        G_{\Vec{i}} = \{g \in G_n \mid g_{i_1} g_{i_2} \cdots g_{i_{d'}} = 1\} \le G_n,
    \]
    for any $d' \le d$ and a multi-index $\Vec{i}=(1\le i_1<i_2<\dots<i_{d'}\le n)$ containing $i$.
\end{defn}

\begin{rem}
    By definition, if $i$ is $d$-degenerate under $G\le G_n$, then it is also $(d-1)$-degenerate under it. Namely, the conditions are of increasing strength.     
\end{rem}

The main ingredient in proving the $\ell$-adic continuity of $\lfun_X$ is the following claim:

\begin{lem}\label{Covered_Old}
    Let $G \le G_n$ be a subgroup under which a certain index $1\le i\le n$ is $d$-degenerate. For every $d$-finite $p$-space $X$, the map induced from $[n\ssm i] \into [n]$ is an isomorphism
    \[
        (L^{n\ssm i}X)^{hG} \iso (L^n X)^{hG}.
    \]
\end{lem}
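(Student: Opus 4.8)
The plan is to prove \Cref{Covered_Old} by induction on the truncation level $d$, using the Postnikov tower of $X$. The base case $d = 0$ is trivial: a $0$-finite space is a finite set, so $L^n X = X$ with trivial $G_n$-action, and the standard projection $L^{n\ssm i}X \to L^n X$ is already an isomorphism. For the inductive step, suppose $X$ is $d$-finite and write its top Postnikov stage as a pullback

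\[\begin{tikzcd}
	X & {B^d A} \\
	\tau_{\le d-1}X & {B^{d+1}A}
	\arrow[from=1-1, to=1-2]
	\arrow[from=1-1, to=2-1]
	\arrow[from=1-2, to=2-2]
	\arrow[from=2-1, to=2-2]
\end{tikzcd}\]

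where $A = \pi_d(X)$ is a finite abelian $p$-group (after passing to a connected component; the non-connected case follows by summing over components, which are permuted trivially since $G$ is an $\ell$-group acting on a $p$-finite set). Applying the finite-limit-preserving functor $(L^{n}(-))^{hG}$ (which commutes with pullbacks, being a composite of mapping-space and homotopy-fixed-point functors) and the analogous functor for $n\ssm i$, and using that the bottom map and right map have source/target of truncation level $\le d{+}1$, it suffices to prove the statement for Eilenberg--MacLane spaces $B^d A$ with $d$ replaced by values $\le d+1$ — but then we've raised the truncation level, so this naive induction needs care. The cleaner route: prove the lemma directly for $X = B^d A$ an Eilenberg--MacLane space by an explicit computation, and then bootstrap to general $d$-finite $X$ by induction on the Postnikov tower, at each stage using the pullback above together with the already-established EM case for the \emph{two} spaces $B^d A$ and $B^{d+1}A$ appearing on the right and bottom (note $B^{d+1}A$ is the relevant object and its level is $\le d+1$, which is fine as long as the EM computation is done for all levels at once).

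**The Eilenberg--MacLane computation.** For $X = B^d A$, we have $L^n X = \Map(B\ZZ_p^n, B^d A) \simeq \bigoplus_{j} B^{d-j} H^j$, where $H^j = H^j(B\ZZ_p^n; A)$ — more precisely $L^n(B^d A) = \Omega^{\infty}$ of a sum of shifted Eilenberg--MacLane spectra governed by the cohomology of the torus, and the $G_n = (\ZZ/\ell)^n$-action is induced by the action on $B\ZZ_p^n$, hence on each $H^j(B\ZZ_p^n; A) \cong \Lambda^j((\ZZ_p^n)^\vee) \otimes A$ (up to the usual identification of the cohomology of a $p$-adic torus with an exterior algebra on $n$ generators, each a weight-$1$ character). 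Taking homotopy fixed points commutes with the product decomposition, and $\pi_0$ of the $G$-fixed points of $B^{d-j}H^j$ is $H^j$ itself if $j = d$ and $(H^d)^G$, while for $j < d$ it contributes to higher homotopy; since $G$ is an $\ell$-group and the groups are $p$-torsion, $(L^n X)^{hG}$ has homotopy $\pi_m = (\Lambda^{d-m}((\ZZ_p^n)^\vee)\otimes A)^G$. The key linear-algebra fact: an element of $\Lambda^{d'}((\ZZ_p^n)^\vee)$ is $G$-fixed, for $G$ acting through diagonal characters $g \mapsto (g_1, \ldots, g_n)$, iff every monomial $e_{i_1}^\vee \wedge \cdots \wedge e_{i_{d'}}^\vee$ appearing in it has $g_{i_1}\cdots g_{i_{d'}} = 1$ for all $g \in G$ — i.e. is supported on multi-indices $\vec{i}$ with $G \subseteq G_{\vec{i}}$. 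By definition of $d$-degeneracy of $i$, \emph{no} such multi-index of length $d' \le d$ contains $i$; hence every $G$-fixed exterior form of degree $\le d$ is annihilated by contraction/restriction in the $i$-th coordinate, which is exactly the statement that it comes from $\Lambda^{d'}$ of the coordinates $[n]\ssm i$. This gives the isomorphism $(L^{n\ssm i}B^dA)^{hG} \iso (L^n B^d A)^{hG}$ for EM spaces of all levels $\le d$ simultaneously.

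**The inductive step and the main obstacle.** Given the EM case, the general step goes: for $d$-finite connected $X$ with top $k$-invariant as above, apply the pullback-preserving functor $\Phi_n = \pi_0\circ(L^n(-))^{hG}$ — more precisely the whole space-level functor $(L^n(-))^{hG}$ — to the square, getting a pullback of spaces; compare with the $(n\ssm i)$-version via the natural transformation induced by the standard projection. On three of the four corners ($\tau_{\le d-1}X$ is $(d{-}1)$-finite so handled by the outer induction on $d$; $B^d A$ and $B^{d+1}A$ are EM spaces handled by the previous paragraph — crucially the $d$-degeneracy of $i$, being a \emph{monotone} condition in $d$, implies $i$ is also $(d{+}1)$-degenerate is \emph{false} in general, so here one must check that the $B^{d+1}A$ term only contributes fixed forms of degree $\le d$: indeed $\Map(B\ZZ_p^n, B^{d+1}A)$ has homotopy in exterior degrees $\le d+1$, but $\pi_0$ and the relevant comparison only involve degrees $\le d$ after looping once in the fiber sequence, so the $d$-degeneracy hypothesis is exactly calibrated). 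The map of pullbacks is then an equivalence by the five-lemma / long-exact-sequence comparison. \textbf{The main obstacle I anticipate} is precisely this bookkeeping at the top of the Postnikov tower: making sure that passing through the $k$-invariant $\tau_{\le d-1}X \to B^{d+1}A$ only ever requires controlling $G$-fixed exterior forms of degree $\le d$ (not $d+1$), so that the hypothesis ``$i$ is $d$-degenerate'' — and not the stronger ``$(d{+}1)$-degenerate'' — suffices; this is what forces the lemma to be stated with ``$d$-finite'' matched to ``$d$-degenerate'' rather than an off-by-one version, and getting the indexing exactly right in the fiber-sequence chase is the delicate point. A secondary, more routine obstacle is justifying that $(L^n(-))^{hG}$ genuinely preserves the relevant pullbacks and that $\pi_0$ of homotopy fixed points agrees with fixed points of $\pi_0$ (already noted in the excerpt via the coprimality of $\ell$ and $p$), plus handling disconnectedness by summing over components.
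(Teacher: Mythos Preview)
Your approach is the paper's: induct on the truncation level, reduce to Eilenberg--MacLane spaces, and compute there that $d$-degeneracy forces every $G$-fixed wedge of degree $\le d$ to avoid the coordinate $i$. One correction: the square you wrote is not a pullback (there is no natural map $X \to B^d A$ making it cartesian); the relevant input is the fiber sequence $X \to \tau_{\le d-1}X \to B^{d+1}A$, or the paper's finer variant $X \to Y \to B^{d+1}C_p$, which peels a single $C_p$ off $\pi_d$ so that $Y$ is still $d$-finite but with $|\pi_*Y| < |\pi_*X|$.

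The obstacle you flag is real and your instinct (``loop once'') is correct, but as written it is not a proof. Here is how the paper closes it. After applying $(L^k(-))^{hG}$ to the fiber sequence, the middle comparison map is an isomorphism by induction. To conclude that the left map is an isomorphism, you do \emph{not} need the right map (over $B^{d+1}$) to be one---only that it be an inclusion of connected components, since components of the base not hit by the middle spaces may be discarded. This weaker claim avoids exterior degree $d{+}1$ entirely: the right map admits a retract (hence is injective on $\pi_0$), and as a map of loop-spaces it suffices to check it is an equivalence on the identity component, i.e.\ after applying $\Omega$. But $\Omega$ replaces $B^{d+1}$ by $B^d$, whose $G$-fixed homotopy groups involve only exterior degrees $\le d$, where your EM computation together with $d$-degeneracy is exactly what is needed. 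The passage from ``isomorphism'' to ``inclusion of components'', enabled by the retract and the loop-space structure, is the step your sketch is missing.
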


\begin{proof}
    We prove the claim by induction on $d$ and the total size of $\pi_* X$. For $d=0$, the claim is trivial, so we assume $d\ge 1$. By induction, we may assume that $\pi_d(X) \neq 0$ and we have a fiber sequence
    \[
        X \too Y \too B^{d+1}C_p
    \]
    with $Y$ a $d$-finite $p$-space with $|\pi_*Y| < |\pi_*X|$. Since both $L^k$ and $(-)^{hG}$ are limit preserving functors we get a map of fiber sequences:
    \[\begin{tikzcd}
    	{(L^{n\ssm i} X)^{hG}} && {(L^{n\ssm i} Y)^{hG}} && {(L^{n\ssm i} B^{d+1} C_p)^{hG}} \\
    	{(L^n X)^{hG}} && {(L^n Y)^{hG}} && {(L^n B^{d+1} C_p)^{hG}}
    	\arrow[from=2-1, to=2-3]
    	\arrow[from=2-3, to=2-5]
    	\arrow[from=1-1, to=1-3]
    	\arrow[from=1-3, to=1-5]
    	\arrow[from=1-3, to=2-3]
    	\arrow[from=1-1, to=2-1]
    	\arrow[from=1-5, to=2-5]
    \end{tikzcd}\]
    The middle vertical map is an isomorphism by induction, hence to show that the left vertical map is an isomorphism, it suffices to show that the right vertical map is an isomorphism. In fact, it suffices only to show that it is an inclusion of connected components, since we can discard the connected components not hit by the right horizontal maps. 
    Since the right vertical map admits a retract, it is injective on all homotopy groups and in particular on $\pi_0$. Moreover, it is a map of loop-spaces (i.e. group objects in spaces), so it suffices to show that is it an isomorphism on the connected component of the base-point, or in other words, after applying $\Omega$.
    Since everything commutes with $\Omega$, we get the analogous map
    \[
        (L^{n\ssm i} B^d C_p)^{hG} \too
        (L^n B^d C_p)^{hG}.
    \]    
    By induction, this map is an isomorphism on all homotopy groups in degrees $\ge 1$ and is also injective on $\pi_0$, as it admits a retract. It therefore remains to show that it is surjective on $\pi_0$. Observe that
    \[
        \pi_0 L^n B^d C_p \simeq 
        H^d(\TT^n ; \FF_p) = 
        \bigwedge^d {\FF_p}\{x_1,\dots,x_n\}.
    \]
    An element in this group is thus a linear combination of wedges 
    $x_{\Vec{i}} = x_{i_1}\wedge x_{i_2}\wedge \cdots\wedge x_{i_d}$. 
    The action of $g = (g_1,\dots,g_n) \in G_n$ on $x_{\Vec{i}}$ is by the obvious formula
    \[
        gx_{\vec{i}} = 
        (g_{i_1}x_{i_1})\wedge \cdots\wedge (g_{i_d}x_{i_d}) 
        = (g_{i_1}\cdots g_{i_d}) x_{\vec{i}}.
    \]
    Therefore, $x = \sum c_{\vec{i}}x_{\Vec{i}}$ is fixed by $g$ if and only if $g_{i_1}\cdots g_{i_d} = 1$ for every $\vec{i}$ with $c_{\vec{i}} \neq 0$. Since $i$ is $d$-degenerate under $G$, we get that a $G$-fixed element $x$ must have $c_{\Vec{i}} = 0$ for all $\Vec{i}$ containing $i$. In other words, it is in the image of 
    \[
        \pi_0 (L^{n\ssm i} B^d C_p) \simeq 
        H^d(\TT^{n\ssm i} ; \FF_p) = 
        \bigwedge^d {\FF_p}\{x_1,\dots,\widehat{x_i},\dots,x_n\}.
    \]
\end{proof}

To make use of the above, we also need the following numerical estimate for the size of the subgroup $G\le G_n$, that ensures the existence of $d$-degenerate index.

\begin{lem}\label{Codim_Degenerate}
    For $G\le G_n$ such that $\dim(G_n/G) < \frac{n}{d}$, there exists an index $1\le i \le n$ which is $d$-degenerate under $G$.
\end{lem}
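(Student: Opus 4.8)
The plan is to reduce the statement to a covering argument in linear algebra over $\FF_\ell$ and then prove the contrapositive.

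The first step is to make $G_n$ into an honest $\FF_\ell$-vector space. Fixing a primitive $\ell$-th root of unity $\omega \in \ZZ_p^\times$ arising from the Teichm\"uller embedding, every element of $G_n$ has the form $g = (\omega^{a_1},\dots,\omega^{a_n})$, and $g \mapsto (a_1,\dots,a_n)$ is a group isomorphism $G_n \iso \FF_\ell^n$. Under this identification the defining relation of $G_{\vec i}$, namely $g_{i_1}\cdots g_{i_{d'}} = 1$, becomes the single linear equation $a_{i_1} + \dots + a_{i_{d'}} = 0$; hence $G_{\vec i}$ is exactly the kernel of the linear functional $\phi_{\vec i}\colon a \mapsto \sum_{j \in \vec i} a_j$, a hyperplane in $\FF_\ell^n$. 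Note that for $|\vec i| \le d$ the functional $\phi_{\vec i}$ has support (the set of coordinates on which it is nonzero) of size at most $d$, with every nonzero coefficient equal to $1$.

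The second step is to rephrase the conclusion. By definition, an index $i$ \emph{fails} to be $d$-degenerate under $G$ precisely when $G \sseq G_{\vec i} = \ker \phi_{\vec i}$ for some multi-index $\vec i \ni i$ with $|\vec i| \le d$, i.e.\ exactly when such a $\phi_{\vec i}$ lies in the annihilator $G^\perp \sseq (\FF_\ell^n)^*$, whose dimension is $n - \dim_{\FF_\ell} G = \dim(G_n/G)$. So I would suppose toward a contradiction that no index is $d$-degenerate under $G$; then for each $i = 1,\dots,n$ I may pick a multi-index $\vec i^{(i)} \ni i$ with $|\vec i^{(i)}| \le d$ and $\phi_{\vec i^{(i)}} \in G^\perp$, and the supports of the $n$ functionals $\phi_{\vec i^{(1)}},\dots,\phi_{\vec i^{(n)}}$ cover $\{1,\dots,n\}$ since $i \in \vec i^{(i)}$ for each $i$.

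The heart of the argument is then a standard ``minimal cover yields independence'' trick. Choose $S \sseq \{1,\dots,n\}$ minimal subject to $\bigcup_{j \in S} \mathrm{supp}(\phi_{\vec i^{(j)}}) = \{1,\dots,n\}$. Since each support has size $\le d$, any cover of $\{1,\dots,n\}$ by them — in particular $S$ — has size $\ge n/d$. Moreover, minimality of $S$ guarantees that for each $j \in S$ there is a coordinate $m$ lying in $\mathrm{supp}(\phi_{\vec i^{(j)}})$ and in no other support indexed by $S$; reading off the $m$-th coordinate of a hypothetical vanishing combination $\sum_{j \in S} c_j \phi_{\vec i^{(j)}} = 0$ forces $c_j \cdot 1 = 0$, so $c_j = 0$. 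Hence $\{\phi_{\vec i^{(j)}}\}_{j \in S}$ is linearly independent in $G^\perp$, giving $\dim(G_n/G) = \dim G^\perp \ge |S| \ge n/d$, contradicting the hypothesis. The only point requiring genuine care is the bookkeeping that makes this chain of inequalities work: checking that the chosen $\phi_{\vec i^{(i)}}$ really do lie in $G^\perp$ (which is just the translation of ``not $d$-degenerate''), and that one and the same minimal cover $S$ simultaneously delivers both the counting bound $|S| \ge n/d$ and the ``private coordinates'' needed for linear independence; the rest is routine.
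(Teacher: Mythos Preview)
Your argument is correct and follows the same overall strategy as the paper: prove the contrapositive by passing to linear algebra over $\FF_\ell$, producing for each non-$d$-degenerate index a functional of support size $\le d$ lying in the annihilator $G^\perp$, and then extracting at least $n/d$ linearly independent such functionals to bound $\dim(G_n/G)$ from below. The difference lies only in the combinatorial extraction step. The paper asserts that among the $n$ chosen functionals one can find $\lfloor n/d\rfloor$ with \emph{pairwise disjoint} supports; you instead take a minimal subcover $S$ of $\{1,\dots,n\}$ by the supports and use the resulting ``private coordinates'' to establish linear independence. Your route is arguably the more careful one: the paper's disjointness claim is not obviously justified as stated (e.g.\ with $n=4$, $d=2$ and chosen supports $\{1,2\},\{1,2\},\{1,3\},\{1,4\}$ no two are disjoint), whereas your minimal-cover argument goes through without any such issue and directly yields $|S|\ge n/d$, which is exactly the inequality the conclusion needs.
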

\begin{proof}
    Assume that for all $j = 1,\dots n$, the index $i$ is \textit{not} $d$-degenerate under $G$, so there exists a corresponding $d'$-tuple $\Vec{i}_j$ with $G \sseq G_{\Vec{i}_j}$. Consequently,
    \[
        G \sseq G_{\Vec{i}_1} \cap \cdots \cap G_{\Vec{i}_n}.
    \]
    Namely, the elements of $G$ satisfy a system of $n$ linear equations involving together all the $n$ variables (since the $j$-th equation involves the $j$-th variable). Since each of these linear equations involves only $\le d$ variables, there is a subsystem of $\floor{\frac{n}{d}}$ equations
    on pairwise disjoint subsets of variables, which are therefore linearly independent. Consequently, the rank of the system of equations is at least $\frac{n}{d}$, which implies that the co-dimension of $G$ is at least $\frac{n}{d}$.
\end{proof}

Combining the above results we get a bound from below on the sizes of the $G_n$-orbits in $\pi_0(L_\new^n X)$.

\begin{prop}\label{Orbit_ell}
    For every $d$-finite $p$-space $X$, the size of each $G_n$-orbit in $\pi_0(L_\new^n X)$ is divisible by $\ell^{\floor{n/d}}$.
\end{prop}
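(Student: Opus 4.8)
The plan is to turn \Cref{Orbit_ell} into a purely group-theoretic statement via the orbit--stabilizer theorem and then feed it into the two lemmas already proved. Fix a new homotopy class $x\in\pi_0(L_\new^n X)$ and let $G=\Stab_{G_n}(x)\le G_n$ be its stabilizer (the orbit of $x$ stays inside $\pi_0(L^n_\new X)$ since $G_n$ preserves new elements). Because $G_n\cong(\ZZ/\ell)^n$ is elementary abelian of $\FF_\ell$-rank $n$, every subgroup has order a power of $\ell$, and the orbit of $x$ has size $[G_n:G]=\ell^{\dim(G_n/G)}$, where $\dim$ denotes $\FF_\ell$-rank. So it suffices to show $\dim(G_n/G)\ge\floor{n/d}$.

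I would argue this by contradiction. Suppose $\dim(G_n/G)<\floor{n/d}$; since $\floor{n/d}\le n/d$ this gives the strict inequality $\dim(G_n/G)<n/d$, so \Cref{Codim_Degenerate} produces an index $1\le i\le n$ that is $d$-degenerate under $G$. Now apply \Cref{Covered_Old}: as $X$ is $d$-finite, the map $(L^{n\ssm i}X)^{hG}\to(L^n X)^{hG}$ induced by $[n\ssm i]\into[n]$ is an equivalence. Passing to $\pi_0$ and using the natural identification $\pi_0((L^m X)^{hG})\iso(\pi_0 L^m X)^G$ — valid because $G$ is an $\ell$-group while $\pi_*(L^m X)$ consists of $p$-groups — the restriction $(\pi_0 L^{n\ssm i}X)^G\to(\pi_0 L^n X)^G$ is a bijection. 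Hence the $G$-fixed class $x$ lies in the image of $\pi_0(L^{n\ssm i}X)\to\pi_0(L^n X)$, i.e.\ $x$ is induced along the standard projection $\TT^n\to\TT^{n\ssm i}$ and is therefore \emph{old}, contradicting $x\in\pi_0(L_\new^n X)$. Thus $\dim(G_n/G)\ge\floor{n/d}$ and $\ell^{\floor{n/d}}\mid\ell^{\dim(G_n/G)}=|G_n\cdot x|$.

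Given the two lemmas, the argument is essentially a two-line application of orbit--stabilizer, so I do not expect a genuine obstacle; all the real work has been front-loaded into \Cref{Covered_Old} and \Cref{Codim_Degenerate}. The one point that needs care is the bookkeeping tying ``old versus new'' to ``degenerate index'': one must be sure that $G$-invariance of $x$, for a $G$ under which $i$ is $d$-degenerate, genuinely forces $x$ to factor through the coordinate projection omitting $i$ (this is exactly the content of \Cref{Covered_Old}, whose proof already handled the subtlety of factorizations being only up to homotopy via \Cref{lem:join}), and that such a factorization is along a \emph{standard} projection so that it witnesses oldness in the sense of the definition. With that checked, the proposition follows.
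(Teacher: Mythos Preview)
Your argument is correct and essentially identical to the paper's: both assume the stabilizer of a new class has codimension strictly less than $\floor{n/d}$, invoke \Cref{Codim_Degenerate} to produce a $d$-degenerate index, then use \Cref{Covered_Old} together with the $\pi_0((L^mX)^{hG})\iso(\pi_0 L^mX)^G$ identification to force the class to be old. The only difference is cosmetic framing (you phrase it as a contradiction starting from a new element, the paper as a contrapositive starting from an arbitrary element of small-codimension stabilizer).
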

\begin{proof}
    Let $\gamma \in \pi_0(L^n X)$ be an element with stabilizer $G_\gamma \le G_n$ of co-dimension $< \floor{n/d}$. By \Cref{Codim_Degenerate}, there is an index $1\le i\le n$, such that $i$ is $d$-degenerate under $G_\gamma$. Consequently, by \Cref{Covered_Old}, we have 
    \[
        (L^{n\ssm i}X)^{hG_\gamma} \iso (L^n X)^{hG_\gamma}.
    \]
    Using the fact that $G_\gamma$ is an $\ell$-group and $L^kX$ is a $p$-space, we deduce that
    \[
        (\pi_0L^{n\ssm i}X)^{G_\gamma} \iso 
        (\pi_0L^n X)^{G_\gamma},
    \]
    so in particular $\gamma$ is not new. To conclude, each element  $\gamma \in L_\new^n X$ has stabilizer $G_\gamma \le G_n$ of co-dimension $>\floor{n/d}$ so its orbit must be of size divisible by $\ell^{\floor{n/d}}$.
\end{proof}

From this we immediately obtain a quantitative refinement of the $\ell$-adic continuity claim for $\lfun_X(n)$. 

\begin{prop}
    For every $d$-finite $p$-space $X$, we have
    \[
        \ell^{\floor{n/d}} \mid \cl{\lfun}_X(n) \qquad \forall n\in \NN.
    \]
    In particular, the function $\lfun_X\colon \NN \to \QQ_\ell$ admits a (unique) $\ell$-adic continuation $\hat{\lfun}_X \colon \ZZ_\ell \to \QQ_\ell$. 
\end{prop}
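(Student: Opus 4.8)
The plan is to deduce this statement directly from \Cref{Orbit_ell} together with \Cref{fbar_new} and Mahler's theorem. First I would recall that \Cref{fbar_new} identifies $\cl{\lfun}_X(n)$ with $|\pi_0(L^n_\new X)|$, so that $\cl{\lfun}_X(n)$ is literally the cardinality of the finite set of new homotopy classes of maps $\TT^n \to X$. Next I would invoke the fact that $G_n$ acts on $\pi_0(L^n_\new X)$ (by the lemma preceding \Cref{Orbit_ell}), so that this finite set decomposes as a disjoint union of $G_n$-orbits. By \Cref{Orbit_ell}, each such orbit has size divisible by $\ell^{\floor{n/d}}$, and hence the total cardinality $\cl{\lfun}_X(n) = |\pi_0(L^n_\new X)|$ is divisible by $\ell^{\floor{n/d}}$ as well. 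This establishes the divisibility claim.

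For the ``in particular'' clause, I would apply Mahler's theorem (the version quoted in the excerpt): a function $f \colon \NN \to \QQ_\ell$ extends to a continuous function on $\ZZ_\ell$ if and only if $\cl{f}(n) \to 0$ in the $\ell$-adic norm as $n \to \infty$. (I note in passing that the excerpt's phrasing ``$\cl{f}(n)\to n$'' is a typo for ``$\cl{f}(n)\to 0$''; I would silently use the correct hypothesis.) Since $\ell^{\floor{n/d}} \mid \cl{\lfun}_X(n)$, the $\ell$-adic valuation of $\cl{\lfun}_X(n)$ is at least $\floor{n/d}$, which tends to $\infty$ with $n$; hence $|\cl{\lfun}_X(n)|_\ell \le \ell^{-\floor{n/d}} \to 0$. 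Mahler's criterion is therefore satisfied, yielding the unique continuous extension $\hat{\lfun}_X \colon \ZZ_\ell \to \QQ_\ell$ given by the formula $\hat{\lfun}_X(a) = \sum_{k=0}^\infty \binom{a}{k}\cl{\lfun}_X(k)$.

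There is essentially no obstacle here: all the hard work has been front-loaded into \Cref{Covered_Old}, \Cref{Codim_Degenerate}, and \Cref{Orbit_ell}, and this proposition is just the bookkeeping that packages the orbit-size bound into a divisibility statement and then feeds it into Mahler's theorem. The only point requiring a modicum of care is to make sure one is summing over \emph{all} $G_n$-orbits in $\pi_0(L^n_\new X)$ — i.e., that the $G_n$-action really does preserve $\pi_0(L^n_\new X)$, which is exactly the content of the lemma just before \Cref{Orbit_ell} — and to note that the trivial orbit (the constant map) is \emph{not} in $\pi_0(L^n_\new X)$, so there is no stray orbit of size $1$ spoiling the divisibility.
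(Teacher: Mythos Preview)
Your proposal is correct and matches the paper's own proof essentially verbatim: the paper also invokes \Cref{Orbit_ell} to say that $\pi_0(L_\new^n X)$ decomposes into $G_n$-orbits each of size divisible by $\ell^{\floor{n/d}}$, hence so is $\cl{\lfun}_X(n)=|\pi_0(L_\new^n X)|$. Your explicit unpacking of the ``in particular'' clause via Mahler's criterion (and the observation that ``$\cl{f}(n)\to n$'' is a typo for ``$\to 0$'') is a correct elaboration that the paper leaves implicit.
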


\begin{proof}
    By \Cref{Orbit_ell}, $\pi_0(L_\new^n X)$ is a disjoint union of $G_n$-orbits each of size divisible by $\ell^{\floor{n/d}}$. Hence, the same holds for  $\cl{\lfun}_X(n) = |\pi_0(L_\new^n X)|$.
\end{proof}

\begin{rem}
    In fact, since the $\ell$-adic valuation of $\cl{\lfun}_X(n)$ increases at least \textit{linearly} in $n$, the extension of $\lfun_X$ to $\ZZ_\ell$ is even locally \textit{analytic}. 
\end{rem}

\subsubsection{Extrapolation to $-1$}

Having shown that the function $\lfun_X$ admits an $\ell$-adic continuation $\hat{\lfun}_X\colon \ZZ_\ell \to \QQ_\ell$ for every $\pi$-finite $p$-space $X$, it remains to show that $\hat{\lfun}_X(-1) = |X|$. By Mahler's theorem combined with \Cref{fbar_new}, we have
\[
    \hat{\lfun}_X(-1) =
    \sum_{n=0}^\infty (-1)^n\cl{\lfun}_X(n) =
    \sum_{n=0}^\infty (-1)^n|\pi_0(L_\new^n X)|.
\]
However, it is not clear how to show that this series of integers converges $\ell$-adically to the rational number $|X|$. We therefore choose an indirect approach and define another function:
\[
    \Lfun_X(n) = |L^n X| \qin \QQ.
\]

The relation between $\Lfun_X$ and $\lfun_X$ is as follows:

\begin{prop}
    For every $\pi$-finite space $X$ and $n \ge 0$, we have 
    $\lfun_X(n) = \Lfun_X(n+1)$.
\end{prop}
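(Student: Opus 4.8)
The plan is to reduce $\lfun_X(n) = \Lfun_X(n+1)$, i.e. $|\pi_0(L^nX)| = |L^{n+1}X|$, to the single statement that the free loop space of any \emph{connected} $\pi$-finite space has homotopy cardinality $1$. First I would use $L^{n+1}X = L(L^nX)$ together with the fact that homotopy cardinality is, by definition, the sum of the cardinalities of the connected components: writing $Y := L^nX = \bigsqcup_{c\in\pi_0(Y)} Y_c$, we have $LY = \bigsqcup_{c} LY_c$, so it suffices to show $|LY_c| = 1$ for each $c$; summing over the (finitely many) $c$ then gives $|L^{n+1}X| = |\pi_0(L^nX)| = \lfun_X(n)$. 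Along the way I would record that $L^nX = \Map(\TT^n,X)$ is $\pi$-finite — since $\TT^n$ is a finite complex and $X$ is $\pi$-finite — so that all homotopy cardinalities in sight are defined and the $Y_c$ are connected $\pi$-finite spaces.

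For the core claim, let $Y$ be connected $\pi$-finite with basepoint $y$. Restriction along $\pt\hookrightarrow S^1$ gives a fiber sequence
\[
    \Omega Y \too LY \too Y
\]
(the evaluation fibration), with fiber $\Omega Y$ over $y$ and split by the constant loops. From the splitting one sees $\pi_k(LY,\mathrm{const}_y)\cong\pi_{k+1}(Y)\times\pi_k(Y)$ and that $\pi_0(LY)$ is the finite set of conjugacy classes in $\pi_1(Y)$, so $LY$ is again $\pi$-finite. Since the base $Y$ is connected, the multiplicativity of homotopy cardinality along fiber sequences of $\pi$-finite spaces gives $|LY| = |\Omega Y|\cdot|Y|$.

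It then remains to evaluate this product using the explicit formula for homotopy cardinality. For connected $Y$ one has $|Y| = \prod_{m\ge 1}|\pi_m(Y)|^{(-1)^m}$. For $\Omega Y$: it has $|\pi_1(Y)|$ components, each equivalent (via translation, $\Omega Y$ being a loop space) to the identity component, whose homotopy groups are $\pi_k(\Omega_0 Y)\cong\pi_{k+1}(Y)$; hence $|\Omega Y| = |\pi_1(Y)|\cdot\prod_{m\ge 2}|\pi_m(Y)|^{(-1)^{m-1}} = \prod_{m\ge 1}|\pi_m(Y)|^{(-1)^{m-1}}$. Multiplying, each factor $|\pi_m(Y)|$ occurs with total exponent $(-1)^{m-1}+(-1)^m = 0$, so $|LY| = 1$.

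I expect no serious obstacle here: the one point needing care is the bookkeeping of $\pi$-finiteness (of $L^nX$ and of $LY$), which is what legitimizes both the homotopy-cardinality formulas and the fiber-sequence multiplicativity. As an alternative to the evaluation fibration, one can use the homotopy pushout $S^1 = \pt\sqcup_{\pt\sqcup\pt}\pt$ to identify $LY$ with the derived self-intersection $Y\times_{Y\times Y}Y$ of the diagonal; for $Y$ connected, $Y\times Y$ is connected and the pullback formula gives $|LY| = |Y|\cdot|Y|/|Y\times Y| = 1$ directly.
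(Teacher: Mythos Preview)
Your proof is correct and follows essentially the same route as the paper: reduce to showing $|LY|=1$ for connected $\pi$-finite $Y$ via the evaluation fiber sequence $\Omega Y \to LY \to Y$ and multiplicativity, then sum over the components of $L^nX$. The paper is terser, writing $|LX| = |X|\cdot|\Omega X| = |X|\cdot|X|^{-1} = 1$ without your explicit unpacking of $|\Omega Y|$ or the $\pi$-finiteness checks, but the argument is the same.
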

\begin{proof}
    For every connected $\pi$-finite space $X$, the fiber sequence
    \[
        \Omega X \too LX \too X
    \]
    implies that 
    \[
        |LX| = |X|\cdot |\Omega X| = |X| \cdot |X|^{-1} = 1.
    \]
    For $X = X_1 \sqcup \dots \sqcup X_r$ a union of connected components we have
    \[
        |LX| = |LX_1 \sqcup \dots \sqcup LX_r| = 
        |LX_1| + \dots + |LX_r| = |\pi_0 X|.
    \]
    It follows that for every $n\ge 0$ we have
    \[
        \Lfun_X(n+1) = |L^{n+1}X| = |L(L^n X)| = |\pi_0(L^n X)| = \lfun_X(n). 
    \]
\end{proof}

Namely, $\Lfun_X$ has the values of $\lfun_X$ shifted one place up and $\Lfun_X(0) = |X|$. Hence, to prove that $\hat{\lfun}_X(-1) = |X|$, it suffices to prove that $\Lfun_X$ satisfies Mahler's criterion. The difficulty lies in the fact that the inverse binomial transform of $\Lfun_X$,
\[
    \cl{\Lfun}_X(n) =  \sum_{k=0}^n (-1)^{n-k} \binom{n}{k}|L^k X|,
\]
is harder to analyze than that of $\lfun_X$, and we shall use what we already proved for the latter to study the former.  
Recall that we have $LX = L_\new X \sqcup L_\old X$ and clearly $L_\new L_\old \simeq L_\old L_\new$, so we get
\[
    |L^k X| = 
    \sum_{m=0}^k \binom{k}{m} |L_\old^{k-m} L_\new^{m} X|.
\]
We remind that by $L^k_\old X$ we mean the $k$-th iterate of $L_\old$ applied to $X$ so these are the ``very old'' element in $L^k X$.
We shall hence define another auxiliary function
\(
    \ofun_X(k) = |L_\old^k X|
\)
with inverse binomial transform 
\[
    \cl{\ofun}_X(k) = 
    \sum_{m=0}^k (-1)^{k-m}\binom{k}{m}\ofun_X(m).
\]
We get
\begin{lem}\label{Fbar_Gbar}
    For every $\pi$-finite space $X$, 
    \[
        \cl{\Lfun}_X(n) = \sum_{m=0}^n \binom{n}{m}\cl{\ofun}_{(L_\new^m X)}(n-m).
    \]
\end{lem}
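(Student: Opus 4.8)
The plan is to massage the double sum by first expanding $\lfun_X(k)=|L^kX|$ via the decomposition $L^kX \simeq \bigsqcup_{m} \binom{[k]}{m}\times L^{k-m}_\old L^m_\new X$, substitute into the inverse binomial transform $\cl{\Lfun}_X(n)=\sum_{k}(-1)^{n-k}\binom nk|L^kX|$, and then reorganize the resulting triple sum so that the inner summation is exactly an inverse binomial transform of $\ofun_{(L^m_\new X)}$. Concretely, first I would write
\[
    \cl{\Lfun}_X(n)
    = \sum_{k=0}^n (-1)^{n-k}\binom nk \sum_{m=0}^k \binom km\,\ofun_{(L^m_\new X)}(k-m),
\]
using $|L^{k-m}_\old L^m_\new X| = \ofun_{(L^m_\new X)}(k-m)$ and the fact that $L_\new$ and $L_\old$ commute (stated in the excerpt), so that $|L^k_\new X|$-style iterates are unambiguous.

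Next I would swap the order of summation to sum over $m$ first, so that for fixed $m$ the variable $k$ ranges over $m\le k\le n$. The key algebraic identity to invoke is the ``subset-of-a-subset'' binomial identity $\binom nk\binom km = \binom nm\binom{n-m}{k-m}$. Substituting this and re-indexing with $j=k-m$ gives
\[
    \cl{\Lfun}_X(n)
    = \sum_{m=0}^n \binom nm \sum_{j=0}^{n-m} (-1)^{(n-m)-j}\binom{n-m}{j}\,\ofun_{(L^m_\new X)}(j),
\]
where I have also rewritten the sign $(-1)^{n-k}=(-1)^{(n-m)-j}$. The inner sum is by definition $\cl{\ofun}_{(L^m_\new X)}(n-m)$, which yields exactly the claimed formula.

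This is a purely formal manipulation, so I do not expect a genuine obstacle; the only point requiring a little care is the bookkeeping of which ``iterated'' functor is meant — the excerpt's warning box flags that $L^n_\old X$ denotes the honest $n$-fold iterate, not the complement of the new part, and it is precisely this honest iterate that appears in $\ofun_X$, so the decomposition $|L^kX|=\sum_m\binom km|L^{k-m}_\old L^m_\new X|$ must be justified by counting, for each homotopy class $\gamma\in\pi_0(L^kX)$, the minimal subset of coordinates on which it is new (an application of the Proposition preceding \Cref{fbar_new}, applied one loop at a time) together with the commutation $L_\new L_\old\simeq L_\old L_\new$. Once that identity is in hand, the rest is the two binomial-coefficient identities above.
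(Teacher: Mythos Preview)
Your proposal is correct and follows essentially the same route as the paper: expand $|L^kX|$ via the binomial decomposition coming from $L = L_\old \sqcup L_\new$ and the commutation $L_\old L_\new \simeq L_\new L_\old$, then apply the identity $\binom nk\binom km = \binom nm\binom{n-m}{k-m}$ and re-index to recognize the inner sum as $\cl{\ofun}_{(L_\new^m X)}(n-m)$. One small remark: the decomposition $|L^kX| = \sum_m \binom km |L_\old^{k-m}L_\new^m X|$ is at the level of spaces (hence homotopy cardinalities), obtained directly from iterating $LX = L_\old X \sqcup L_\new X$, so you need not invoke the $\pi_0$-level Proposition preceding \Cref{fbar_new} here.
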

\begin{proof}
    Unwinding the definitions,
    \[
        \cl{\Lfun}_X(n) =  \sum_{k=0}^\infty (-1)^{n-k} \binom{n}{k}|L^k X| =
    \]
    \[
        \sum_{k=0}^\infty (-1)^{n-k} \binom{n}{k}\Bigg(\sum_{m=0}^\infty \binom{k}{m} |L_\old^{k-m} L_\new^{m} X|\Bigg) = 
    \]
    \[
        \sum_{k=0}^n\sum_{m=0}^k (-1)^{n-k} \binom{n}{k}\binom{k}{m}\ofun_{L_\new^{m} X}(k-m).
    \]
    Using the binomial identity
    \[
        \binom{n}{k}\binom{k}{m} = 
        \binom{n}{m}\binom{n-m}{k-m}
    \]
    we get
    \[
        \sum_{k=0}^\infty\sum_{m=0}^\infty (-1)^{n-k} \binom{n}{m}\binom{n-m}{k-m}\ofun_{(L_\new^{m} X)}(k-m) = 
    \]
    \[
        \sum_{m=0}^\infty \binom{n}{m} \Bigg(\sum_{k=0}^\infty (-1)^{n-k} \binom{n-m}{k-m}\ofun_{(L_\new^{m} X)}(k-m)\Bigg).
    \]
    Re-indexing so that $k-m$ becomes $k$ we finally obtain,
    \[
        \sum_{m=0}^\infty \binom{n}{m} \Bigg(\sum_{k=0}^\infty (-1)^{n-m-k} \binom{n-m}{k}\ofun_{(L_\new^{m} X)}(k)\Bigg) = 
        \sum_{m=0}^\infty \binom{n}{m}\cl{\ofun}_{(L_\new^m X)}(n-m).
    \]    
\end{proof}

It will thus suffice to bound from below the $\ell$-adic valuation of $\cl{\ofun}_{(L_\new^m X)}(n-m)$, uniformly in $m$, by a function that grows to infinity as $n\to \infty$.  We begin by analyzing $\ofun_X$.

\begin{lem}\label{G_Formula}
    For every connected $d$-finite space $X$, we have 
    \[
        \ofun_X(n) =
        \prod_{m=1}^d |\pi_m(X)|^{-\binom{n-1}{m-1}}.
    \]
\end{lem}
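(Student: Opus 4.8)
I would prove this by iterating the free loop space fibration and tracking homotopy cardinalities through each stage. \emph{First step: one application of $L_\old$.} For connected $X$, the evaluation $LX \to X$ (at the basepoint of $S^1 = B\ZZ$) is a fibration with fiber $\Omega X$ and a section $x \mapsto \mathrm{const}_x$. The old elements of $\pi_0(LX)$ form the image of $\pi_0(X) = \pi_0(L^0X) \to \pi_0(LX)$, which is a single point since $X$ is connected, so $L_\old X$ is the connected component of the constant loop; restricting the fibration to it gives a fiber sequence $(\Omega X)_0 \to L_\old X \to X$ with a section, where $(\Omega X)_0$ is the component of the constant loop in $\Omega X$, so that $\pi_k((\Omega X)_0) \cong \pi_{k+1}(X)$ for $k \ge 1$. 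The section splits the long exact sequence into short exact sequences $1 \to \pi_{k+1}(X) \to \pi_k(L_\old X) \to \pi_k(X) \to 1$ for $k \ge 1$, hence
\[
    |\pi_k(L_\old X)| = |\pi_k(X)|\cdot|\pi_{k+1}(X)| \qquad (k\ge 1),
\]
and in particular $L_\old$ carries connected $d$-finite spaces to connected $d$-finite spaces.

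\emph{Second step: induction on $n$.} Applying the first step repeatedly (each $L_\old^n X$ being connected and $d$-finite), I would prove by induction on $n$ that
\[
    |\pi_k(L_\old^n X)| = \prod_{j \ge 0} |\pi_{k+j}(X)|^{\binom{n}{j}} \qquad (k \ge 1),
\]
the base case being $\binom{0}{j} = \delta_{j,0}$ and the inductive step combining the displayed identity of the first step with Pascal's rule $\binom{n}{j}+\binom{n}{j-1} = \binom{n+1}{j}$. All products here are finite, since $|\pi_m(X)| = 1$ for $m > d$.

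\emph{Third step: assembling $\ofun_X$.} As $L_\old^n X$ is connected and $d$-finite, $\ofun_X(n) = |L_\old^n X| = \prod_{k\ge 1}|\pi_k(L_\old^n X)|^{(-1)^k}$; substituting the second step and reindexing by $m = k+j$ gives
\[
    \ofun_X(n) = \prod_{m \ge 1}|\pi_m(X)|^{\,\sum_{j=0}^{m-1}(-1)^{m-j}\binom{n}{j}}.
\]
The proof is then finished by the elementary identity $\sum_{j=0}^{m-1}(-1)^j\binom{n}{j} = (-1)^{m-1}\binom{n-1}{m-1}$ (an immediate induction on $m$ using Pascal's rule), which rewrites the exponent of $|\pi_m(X)|$ as $-\binom{n-1}{m-1}$, with the product truncating at $m = d$.

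The only genuinely delicate point is the first step: one must be careful that the fiber of $L_\old X \to X$ is the \emph{basepoint component} $(\Omega X)_0$ of $\Omega X$ rather than all of $\Omega X$ — equivalently, that pulling back the component decomposition $LX \to \pi_0(LX)$ along the old point meets each fiber $\Omega X$ precisely in its trivial-loop component. Once this is pinned down, everything downstream is bookkeeping with binomial coefficients.
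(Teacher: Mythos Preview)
Your proof is correct and follows essentially the same approach as the paper: both arguments reduce to the Pascal recursion $\binom{n}{j}+\binom{n}{j-1}=\binom{n+1}{j}$ and finish with the alternating-sum identity $\sum_{j=0}^{m-1}(-1)^j\binom{n}{j}=(-1)^{m-1}\binom{n-1}{m-1}$. The only organisational difference is that the paper, rather than iterating the fiber sequence $(\Omega X)_0 \to L_\old X \to X$, passes once and for all through $|L_\old^n X| = |\Omega L_\old^n X|^{-1} = |L^n\Omega X|^{-1}$ and then exploits the splitting $L\Omega Y \simeq \Omega Y \times \Omega^2 Y$ to compute $|L^n\Omega X| = \prod_k |\Omega^{k+1}X|^{\binom{n}{k}}$; your version tracks the individual $|\pi_k(L_\old^n X)|$ instead, which is equivalent but slightly more hands-on.
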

\begin{proof}
    Observe that we have natural isomorphisms 
    \[
        \Omega LY \simeq 
        L\Omega Y \simeq
        \Omega Y \times \Omega^2 Y
    \]
    for every pointed space $Y$.
    Using the fact that $L_\old^n X$ is connected and applying the above inductively, we compute
    \[
        \ofun_X(n) = 
        |L^n_\old X| = 
        |\Omega L^n_\old X|^{-1} = 
        |\Omega L^n X|^{-1} =
        |L^n \Omega X|^{-1} =
        \prod_{k=0}^n |\Omega^{k+1} X|^{-\binom{n}{k}} =
    \]
    \[
        \prod_{k=0}^\infty \Big(\prod_{m = 0}^\infty |\pi_{m}(\Omega^{k+1} X)|^{(-1)^m}\Big)^{-\binom{n}{k}} =
        \prod_{k=0}^\infty \Big(\prod_{m = 0}^\infty |\pi_{m+k+1}(X)|^{(-1)^m}\Big)^{-\binom{n}{k}} =
    \]
    \[
        \prod_{k=0}^\infty \Big(\prod_{m = k+1}^\infty |\pi_m(X)|^{(-1)^{m+k+1}}\Big)^{-\binom{n}{k}} =
        \prod_{m=1}^\infty \Big(\prod_{k = 0}^{m-1} |\pi_m(X)|^{(-1)^{m+k+1}}\Big)^{-\binom{n}{k}} =
    \]
    \[
        \prod_{m=1}^\infty \Big(\prod_{k = 0}^{m-1} |\pi_m(X)|^{(-1)^{m+k+1}}\Big)^{-\binom{n}{k}}=
        \prod_{m=1}^\infty |\pi_m(X)|^{(-1)^m\Big(\sum_{k=0}^{m-1}\binom{n}{k}(-1)^k\Big)} = 
    \]
    \[
        \prod_{m=1}^\infty |\pi_m(X)|^{-\binom{n-1}{m-1}} = 
        \prod_{m=1}^d |\pi_m(X)|^{-\binom{n-1}{m-1}}.
    \]
    In the second to last euality we have used the binomial identity
    \[
        \sum_{k=0}^{m-1}\binom{n}{k}(-1)^k = 
        (-1)^{m-1} \binom{n-1}{m-1}.
    \]
\end{proof}

Assuming further that $X$ is a $p$-space, we get the following consequence:

\begin{cor}\label{G_Expoly}
    For every connected $d$-finite $p$-space $X$, we have 
    \[
        \ofun_X(n) = p^{f(n)},
    \]
    for some (integer valued) polynomial $f(x) \in \QQ[x]$ of degree $d$.
\end{cor}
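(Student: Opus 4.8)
The plan is to obtain this as an immediate consequence of \Cref{G_Formula}; the only work is to package its output into the form $p^{f(n)}$ and to check that $f$ has the asserted shape. First I would invoke that $X$ is a $p$-space: each homotopy group $\pi_m(X)$ with $1\le m\le d$ is a finite $p$-group, so $|\pi_m(X)| = p^{a_m}$ for some $a_m\in\NN$. Substituting into \Cref{G_Formula} yields
\[
    \ofun_X(n) = \prod_{m=1}^d \big(p^{a_m}\big)^{-\binom{n-1}{m-1}} = p^{f(n)}, \qquad f(x) := -\sum_{m=1}^d a_m\binom{x-1}{m-1}.
\]

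Next I would check that $f$ is an integer-valued polynomial of the right degree. Each $\binom{x-1}{m-1} = \frac{(x-1)(x-2)\cdots(x-m+1)}{(m-1)!}$ is a polynomial in $x$ of degree exactly $m-1$ which takes integer values on all of $\ZZ$, so $f$, being a $\ZZ$-linear combination of these, lies in $\QQ[x]$, is integer-valued, and has degree $\max\{\, m-1 : a_m\neq 0\,\}$. In particular $\deg f\le d$, consistent with the statement; more precisely $\deg f = d-1$ when $\pi_d(X)\neq 0$ and is strictly smaller otherwise.

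I do not expect any real obstacle here: the corollary is essentially a restatement of \Cref{G_Formula} under the additional hypothesis that $X$ is a $p$-space. The single point deserving a line of care is the step from ``$f\in\QQ[x]$'' to ``$f$ is integer-valued'', which is not automatic for rational polynomials but holds here because the $\binom{x-1}{m-1}$ are the standard integer-valued binomial polynomials and the coefficients $a_m=\log_p|\pi_m(X)|$ are non-negative integers. This integrality is precisely what is needed downstream so that $\ofun_X(n)=p^{f(n)}$ makes sense as a rational number with a (possibly negative) integer exponent.
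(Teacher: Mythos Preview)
Your argument is correct and is exactly the paper's approach: invoke \Cref{G_Formula}, write $c_m = \log_p|\pi_m(X)|$, and set $f(x) = -\sum_m c_m\binom{x-1}{m-1}$. Your additional remark that the actual degree of $f$ is $d-1$ (not $d$) when $\pi_d(X)\neq 0$ is a valid observation about a minor imprecision in the stated degree; this does not affect anything downstream, since in \Cref{G_ell} only an upper bound on $\deg f$ is used.
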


\begin{proof}
    By \Cref{G_Formula}, we can take 
    \[
        f(x) = -\sum_{m=0}^d c_m \binom{x-1}{m-1}
    \]
    for $c_m = \log_p|\pi_m(X)|$.
\end{proof}

To analyze the inverse binomial transform of such functions we shall use a simple observation about the inverse binomial transform of polynomials.

\begin{lem}\label{Polynomial_Inv_Binomial}
    For every polynomial $f(x) \in \QQ[x]$ of degree $d$, the inverse binomial transform of $f(n)$ satisfies $\cl{f}(n) = 0$ for $n\ge d+1$.
\end{lem}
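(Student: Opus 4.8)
The claim is a standard fact, and the plan is to prove it by the finite difference calculus. The key observation is that the inverse binomial transform is exactly the $n$-th finite difference operator applied at $0$: if we write $(\Delta f)(x) = f(x+1) - f(x)$, then
\[
    \cl{f}(n) = \sum_{k=0}^n (-1)^{n-k}\binom{n}{k} f(k) = (\Delta^n f)(0).
\]
This identity is itself proved by a short induction on $n$, unwinding the definition of $\Delta^n$ and using Pascal's rule $\binom{n}{k} = \binom{n-1}{k} + \binom{n-1}{k-1}$ to match coefficients.

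Granting this, the statement reduces to the well-known fact that the finite difference operator $\Delta$ lowers the degree of a polynomial by exactly one: if $f(x) \in \QQ[x]$ has degree $d$, then $\Delta f$ has degree $d-1$, since in the leading term $a_d x^d$ the top-degree contributions of $f(x+1)$ and $f(x)$ cancel, leaving a polynomial whose leading term is $d\, a_d\, x^{d-1}$. Iterating, $\Delta^{d+1} f$ is the zero polynomial, so in particular $(\Delta^{n} f)(0) = 0$ for every $n \ge d+1$ (as $\Delta^n f = \Delta^{n-d-1}(\Delta^{d+1} f) = 0$). Combining this with the identity of the previous paragraph gives $\cl{f}(n) = 0$ for $n \ge d+1$, as claimed.

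There is really no substantial obstacle here; the only thing requiring a modicum of care is the bookkeeping in the induction establishing $\cl{f}(n) = (\Delta^n f)(0)$, and even that can be bypassed entirely by instead expanding $f$ in the basis of binomial coefficients $\binom{x}{j}$ for $0 \le j \le d$ — a basis for polynomials of degree $\le d$ — and using the elementary identity $\cl{\,\binom{x}{j}\,}(n) = \delta_{j,n}$, which follows from Vandermonde's identity $\sum_{k} (-1)^{n-k}\binom{n}{k}\binom{k}{j} = \delta_{j,n}$ for $n \geq j$ and from the fact that $\binom{k}{j}$ vanishes for $k < j$. Since $\cl{(-)}$ is linear in $f$, this immediately yields $\cl{f}(n) = 0$ whenever $n$ exceeds the degree $d$. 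I would present whichever of the two arguments is shorter given the paper's conventions; the second avoids introducing $\Delta$ at all.
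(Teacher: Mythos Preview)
Your proof is correct and your first argument is essentially identical to the paper's: both identify $\cl{f}(n) = (\Delta^n f)(0)$ and then use that $\Delta$ lowers polynomial degree by one. Your alternative via the binomial-coefficient basis is also fine, but the paper opts for the forward-difference approach.
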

\begin{proof}
    The inverse binomial transform can be computed by $\cl{f}(n) = (\Delta^n f)(0)$, where $\Delta$ is the forward difference operator
    \[
        (\Delta f)(n) = f(n+1) - f(n).
    \]
    For a degree $d$ polynomial $f$, the forward difference $\Delta f$ is a polynomial of degree $d-1$, hence the claim follows by induction.
\end{proof}

\begin{prop}\label{G_ell}
    For every $d$-finite $p$-space $X$, we have
    \[
        \ell^{\floor{n/d}} \mid \cl{\ofun}_X(n) \qquad \forall n\in \NN.
    \]
\end{prop}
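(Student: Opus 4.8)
The plan is to reduce to the connected case and then combine the explicit formula of \Cref{G_Expoly} with the hypothesis $\ell \mid p-1$. For the reduction, write $X$ as the disjoint union of its finitely many connected components $X_i$; each $X_i$ is again a $d$-finite $p$-space, and since $L_\old$ commutes with disjoint unions one has $\ofun_X = \sum_i \ofun_{X_i}$, hence $\cl{\ofun}_X = \sum_i \cl{\ofun}_{X_i}$ by linearity of the inverse binomial transform. So it suffices to treat connected $X$, where \Cref{G_Expoly} applies.

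For connected $X$, write $\ofun_X(n) = p^{f(n)}$ with $f \in \QQ[x]$ an integer-valued polynomial of degree $\le d$. The key move is to expand $p$ around $1$: since $\ell \mid p-1$ we have $v_\ell(p-1) \ge 1$, and since $f(n) \in \ZZ$ the coefficients $\binom{f(n)}{j}$ are integers, so the generalized binomial series
\[
    \ofun_X(n) \;=\; \big(1+(p-1)\big)^{f(n)} \;=\; \sum_{j \ge 0} \binom{f(n)}{j}(p-1)^j
\]
converges $\ell$-adically and represents $p^{f(n)}$ in $\QQ_\ell$; this is legitimate for arbitrary, possibly negative, integer exponents $f(n)$ (which do occur, for instance $f \equiv -1$ when $X = BC_p$).

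Next I would set $h_j(n) := \binom{f(n)}{j}$, which is $\ZZ$-valued on $\NN$ and agrees with a polynomial in $n$ of degree $\le jd$ --- being the composite of the degree-$j$ polynomial $\binom{y}{j}$ with the degree-$\le d$ polynomial $f$. Using that the inverse binomial transform is given by $\cl{g}(n) = (\Delta^n g)(0)$, hence is $\ZZ_\ell$-linear and $\ell$-adically continuous, I would transform the series above termwise to obtain $\cl{\ofun}_X(n) = \sum_{j \ge 0}(p-1)^j\,\cl{h_j}(n)$. Now \Cref{Polynomial_Inv_Binomial} gives $\cl{h_j}(n) = 0$ as soon as $n > jd$, so only the terms with $j \ge n/d$ --- in particular with $j \ge \floor{n/d}$ --- contribute; each such term lies in $(p-1)^j\ZZ_\ell$ and so has $\ell$-adic valuation at least $j \ge \floor{n/d}$, whence the same bound holds for $\cl{\ofun}_X(n)$.

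The only points requiring genuine care, which I expect to be the heart of the matter beyond routine bookkeeping, are: (i) justifying that the inverse binomial transform --- a finite sum for each fixed $n$ --- may be applied termwise to the infinite $\ell$-adically convergent binomial series, which follows from uniform $\ell$-adic convergence over the finitely many arguments $0, \dots, n$; and (ii) the degree estimate showing $\binom{f(n)}{j}$ is a polynomial in $n$ of degree $\le jd$, which is exactly what makes \Cref{Polynomial_Inv_Binomial} force the vanishing of $\cl{h_j}(n)$ for $j < n/d$ and thereby yield a lower bound on the valuation that grows (linearly) with $n$. Both are elementary, so the proposition follows.
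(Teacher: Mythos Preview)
Your proposal is correct and follows essentially the same route as the paper: write $\ofun_X(n)=p^{f(n)}$ via \Cref{G_Expoly}, expand $p^{f(n)}=(1+(p-1))^{f(n)}$ as a binomial series, observe that $\binom{f(n)}{j}$ is a polynomial in $n$ of degree $\le jd$, and invoke \Cref{Polynomial_Inv_Binomial} to kill the low-$j$ terms in the inverse binomial transform. Your version is slightly more careful than the paper's in two respects --- you make the reduction to connected $X$ explicit (the paper silently applies \Cref{G_Expoly}, which is stated only for connected spaces), and you spell out the justification for transforming the infinite series termwise --- but these are expository refinements, not a different argument.
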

\begin{proof}
    By \Cref{G_Expoly}, we have $\ofun_X(n) = p^{f(n)}$ for some degree $d$ polynomial. Write $p = 1 + \ell N$ and expand
    \[
        \ofun_X(n) = (1+ \ell N)^{f(n)} = 
        1 + \binom{f(n)}{1}\ell N + \binom{f(n)}{2}\ell^2 N^2 + \binom{f(n)}{3}\ell^3 N^3 + \dots 
    \]
    Since $\binom{f(x)}{k}$ is a polynomial of degree $dk$, by \Cref{Polynomial_Inv_Binomial}, its inverse binomial transform vanishes for $n\ge dk + 1$. By the linearity of the binomial transform we get that for all such $n$, the function $\cl{G}_X(n)$ agrees with the inverse binomial transform $\cl{G}_{X,k+1}(n)$ of the tail of the above expansion
    \[
        \ofun_{X,k+1}(n) := \binom{f(n)}{k+1}\ell^{k+1}N^{k+1} +
        \binom{f(n)}{k+2}\ell^{k+2}N^{k+2} +
        \dots,
    \]
    which is divisible by $\ell^{k+1}$, implying the claim.
\end{proof}

We are now in good shape to prove the $\ell$-adic continuity of the function $\lfun_X(n) := |L^n X|$ by combining our estimate for $\cl{\ofun}_X(n-m)$ with the estimate for $\cl{\lfun}_X(m)$ from the previous subsection.

\begin{thm}\label{Extrapolation_Minus_One}
    For every $d$-finite $p$-space $X$, the inverse binomial transform of the function $\Lfun_X(n) = |L^n X|$ satisfies
    \[
        \ell^{\floor{n/d}} \mid \cl{\Lfun}_X(n) \qquad \forall n\in \NN.
    \]
    Consequently, $\Lfun_X$ is $\ell$-adically continuous and hence $\hat{\lfun}_X(-1) = \Lfun_X(0) = |X|$.
\end{thm}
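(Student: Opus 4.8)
The plan is to verify Mahler's criterion for the auxiliary function $\Lfun_X$ by bounding below the $\ell$-adic valuation of its inverse binomial transform, and then transport the conclusion back to $\lfun_X$ through the shift identity $\lfun_X(n)=\Lfun_X(n+1)$. The starting point is \Cref{Fbar_Gbar}, which already gives $\cl{\Lfun}_X(n)=\sum_{m=0}^{n}\binom{n}{m}\,\cl{\ofun}_{L^m_\new X}(n-m)$; so it suffices to bound $v_\ell\bigl(\cl{\ofun}_{L^m_\new X}(n-m)\bigr)$ from below, uniformly in $0\le m\le n$, by a quantity tending to $\infty$ with $n$ (I write $v_\ell$ for the $\ell$-adic valuation on $\QQ$). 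First, $L^m_\new X$ is again a $d$-finite $p$-space: it is a union of path components of $L^m X=\Map(\TT^m,X)$, and mapping the finite complex $\TT^m$ into a $d$-truncated space with finite $p$-group homotopy groups produces a $d$-truncated space whose homotopy groups are assembled from the $H^{*}(\TT^m;\pi_{*}X)$ and hence are finite $p$-groups. Since $L_\old$ preserves coproducts, $\cl{\ofun}_{L^m_\new X}(k)=\sum_{Y}\cl{\ofun}_Y(k)$, the sum running over the connected components $Y$ of $L^m_\new X$.

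Now there are two independent sources of $\ell$-divisibility to combine. On one hand, each component $Y$ is a connected $d$-finite $p$-space, so \Cref{G_ell} --- more precisely its proof, which gives the slightly sharper $\ell^{\lceil k/d\rceil}\mid\cl{\ofun}_Y(k)$ --- yields $v_\ell\bigl(\cl{\ofun}_Y(n-m)\bigr)\ge\lceil(n-m)/d\rceil$. On the other hand, $G_m$ acts on $L^m X$, preserves $L^m_\new X$ (the standard projections being $G_m$-equivariant), and hence permutes the components of $L^m_\new X$ \emph{through equivalences}; as $\ofun_Y$ is a homotopy invariant of $Y$, the function $\cl{\ofun}_Y$ is constant along each $G_m$-orbit of components, and by \Cref{Orbit_ell} (sharpened in the same way to $\lceil m/d\rceil$) every such orbit has size divisible by $\ell^{\lceil m/d\rceil}$. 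Grouping the components into $G_m$-orbits $O$ with representatives $Y_O$ we get
\[
    \cl{\ofun}_{L^m_\new X}(n-m)=\sum_{O}|O|\cdot\cl{\ofun}_{Y_O}(n-m),
\]
and since $v_\ell\bigl(\cl{\ofun}_{Y_O}(n-m)\bigr)\ge 0$ always --- it is a $\ZZ$-linear combination of powers of $p$, and $p$ is an $\ell$-adic unit because $\ell\mid p-1$ --- each summand has $v_\ell$ at least $\lceil m/d\rceil+\lceil(n-m)/d\rceil$. By $\lceil a\rceil+\lceil b\rceil\ge\lceil a+b\rceil$ this is $\ge\lceil n/d\rceil\ge\floor{n/d}$, uniformly in $m$.

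Plugging back into \Cref{Fbar_Gbar} and using $\binom{n}{m}\in\ZZ$, we obtain $\ell^{\floor{n/d}}\mid\cl{\Lfun}_X(n)$ for every $n$. Since $\floor{n/d}\to\infty$, Mahler's theorem produces a unique continuous extension $\hat{\Lfun}_X\colon\ZZ_\ell\to\QQ_\ell$ restricting to $\Lfun_X$ on $\NN$. Finally $\hat{\lfun}_X$ and $a\mapsto\hat{\Lfun}_X(a+1)$ are two continuous functions $\ZZ_\ell\to\QQ_\ell$ agreeing on the dense subset $\NN$ (where $\lfun_X(n)=\Lfun_X(n+1)$), hence they agree everywhere; evaluating at $a=-1$ gives $\hat{\lfun}_X(-1)=\hat{\Lfun}_X(0)=\Lfun_X(0)=|L^0X|=|X|$.

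The one genuinely delicate point is the middle paragraph. Applying \Cref{G_ell} to $L^m_\new X$ \emph{as a whole} controls $v_\ell\bigl(\cl{\ofun}_{L^m_\new X}(n-m)\bigr)$ only by $\lceil(n-m)/d\rceil$, which is vacuous for $m$ near $n$ (for $m=n$ it says nothing, yet $\cl{\ofun}_{L^n_\new X}(0)=|L^n_\new X|$ must still be highly $\ell$-divisible). The fix is to feed in the second divisibility --- that $L^m_\new X$ has many components, organized $G_m$-equivariantly so that the two powers of $\ell$ genuinely multiply rather than merely compete --- and the bookkeeping lands exactly on $\ell^{\floor{n/d}}$ because both the per-component estimate and the per-orbit estimate are naturally \emph{ceilings}.
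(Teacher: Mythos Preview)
Your proof is correct and follows exactly the paper's strategy: expand $\cl{\Lfun}_X(n)$ via \Cref{Fbar_Gbar}, split $L^m_\new X$ into connected components, and multiply the per-component bound from \Cref{G_ell} by the orbit-size bound from \Cref{Orbit_ell}. One point where you are actually more careful than the paper: your use of ceilings, with $\lceil m/d\rceil+\lceil (n-m)/d\rceil\ge\lceil n/d\rceil$, is what makes the final step go through --- the paper's written inequality $\lfloor m/d\rfloor+\lfloor (n-m)/d\rfloor\ge\lfloor n/d\rfloor$ is false in general (e.g.\ $n=2$, $m=1$, $d=2$), but as you note the proofs of \Cref{Orbit_ell} and \Cref{G_ell} do yield the sharper ceiling bounds.
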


\begin{proof}
    By \Cref{Fbar_Gbar}, we have
    \[
        \cl{\Lfun}_X(n) = \sum_{m=0}^n \binom{n}{m}\cl{\ofun}_{(L_\new^m X)}(n-m).
    \]
    it thus suffices to show that for each $m=0,\dots, n$ we have
    \[
        \ell^{\floor{n/d}} \mid \cl{\ofun}_{(L_\new^m X)}(n-m).
    \]
    Observe that
    \[
        \cl{\ofun}_{X\sqcup Y}(k) = 
        \cl{\ofun}_{X}(k) + \cl{\ofun}_{Y}(k).
    \]
    hence, 
    \[
        \cl{\ofun}_{(L_\new^m X)}(n-m) = 
        \sum_{\gamma \in \pi_0(L_\new^{m}X)}\cl{\ofun}_{(L_\gamma^m X)}(n-m),
    \]
    where $L_\gamma^m X \sseq L^m X$ is the connected component of $\gamma$. Each  $L_\gamma^m X$ is a connected $d$-finite $p$-space, so by \Cref{G_ell} we have 
    \[
        \ell^{\floor{(n-m)/d}} \mid \cl{\ofun}_{(L_\gamma^m X)}(n-m).
    \]
    On the other hand, the group $G_n \simeq (\ZZ/\ell)^n$ acts on $\pi_0(L_\new^m X)$ and if $\gamma$ and $\gamma'$ are in the same orbit, then 
    \[
        \cl{\ofun}_{(L_\gamma^m X)}(n-m) = 
        \cl{\ofun}_{(L_\gamma'^m X)}(n-m).
    \]
    We have also shown that the size of each orbit is divisible by $\ell^{\floor{m/d}}$. Thus, by \Cref{Orbit_ell}, $\cl{\ofun}_{(L_\new^m X)}(n-m)$ is divisible by 
    \[
        \ell^{\floor{m/d}}\cdot \ell^{\floor{(n-m)/d}} = 
        \ell^{\floor{m/d}+\floor{(n-m)/d}}
    \]
    and 
    \[
        \floor{m/d}+\floor{(n-m)/d} \ge \floor{n/d}.
    \]
\end{proof}

\section{Examples of $\zeta_X$}\label{Sec:Examples}

Our method for establishing the $\ell$-adic continuity of the function $\lfun_X$ and evaluating its extrapolation to $n= -1$ circumvented the need for an explicit closed-form formula for the numbers $\lfun_X(n)$ in terms of the homotopy invariants of $X$. In this section, we provide such formulae in some sufficiently simple cases, which also hint at the difficulty of establishing such formulae in complete generality.

\subsubsection{Loop spaces}

As already note in the introduction, for the fundamental example of the Eilenberg-MacLane space $ B^dC_p$ we have
\[
    \lfun_{B^dC_p}(n) = 
    |H^d(\TT^n; \FF_p)| = 
    p^{\binom{n}{d}}.
\]
Since $p \equiv 1$ modulo $\ell$, this function is $\ell$-adically continuous and its extrapolation to $n=-1$ is indeed given, independently of $\ell$, by 
\[
    p^{\binom{-1}{d}} = p^{(-1)^d} = |B^dC_p| \qin \QQ \sseq \QQ_\ell.
\] 
More generally, the situation is quite simple for arbitrary loop spaces.

\begin{example}
    For a $d$-finite loop space $X$, we have $LX \simeq X \times \Omega X$ and therefore
    \[
        \lfun_X(n) = \lfun_X(n-1)\lfun_{\Omega X}(n-1).
    \]
    Since $\Omega X$ is a $(d-1)$-finite loop space, we can solve inductively the above recursive formula,
    \[
        \lfun_X(n) = \prod_{k=0}^d |\pi_k X|^{\binom{n}{k}},
    \]
    using nothing but the binomial identity
    \[
        \binom{n}{k} = \binom{n-1}{k} + \binom{n-1}{k-1}.
    \]
    In other words, when $X$ is a loop space, $\lfun_X$ does not see the Postnikov invariants of $X$, in the sense that it does not distinguish it from the product of Eilenberg-MacLane spaces\footnote{By \cite{hopkins1998morava}, for \textit{double} loop $\pi$-finite spaces, the $K(n)$-homology itself can not see the Postnikov invariants.} 
    $\prod_{k=0}^d B^k(\pi_k X)$.
\end{example}

In contrast, for arbitrary $\pi$-finite $p$-spaces $X$, which are not loop spaces, the situation is considerably more complicated. This is dew to the fact that the connected components of $LX$ are no longer isomorphic to each other (which gets worse as the functor $L$ is iterated). As a consequence, the function $\lfun_X$ no longer depends only on the sizes of the homotopy groups of $X$. We present two families of examples in which one can give a closed form formula for $\lfun_X$, and which demonstrate the dependence of $\lfun_X$ both on the homotopy group structure and the Postnikov invariants of $X$.

\subsubsection{Classifying spaces}

The Morava-Euler characteristics of classifying spaces $BG$ of finite (not necessarily $p$-)groups $G$, were studied already by Hopkins-Kuhn-Ravenel in their celebratd work on generalized character theory \cite{hopkins2000generalized}. For a fixed (implicit) prime $p$, let $G^{(n)} \sseq G^n$ be the set of $n$-tuples of commuting elements of $p$-power order in $G$. The group $G$ acts on $G^{(n)}$ by (simultaneous) conjugation. In the first part of \cite[Theorem~B]{hopkins2000generalized}, it is shown that
\[
    \chi_n(BG) = |G^{(n)}/\conj| = |[B\ZZ_p^n, BG]|.
\]
Moreover, using M\"obius inversion on the lattice of abelian subgroups of $G$, the second part of \cite[Theorem~B]{hopkins2000generalized} provides the formula
\[
    \chi_n(BG) = \frac{1}{|G|}\sum_{A}c_A[A:A_p]|A_p|^{n+1}, 
\]
where the sum is over all abelian subgroups $A\le G$, $A_p \le A$ is the $p$-primary part (i.e. $p$-Sylow subgroup), and $c_A$ are integer coefficients determined by the recursive relation
\[
    \sum_{B \le A} c_B = 1.
\]
In particular, it is clear that $\chi_n(BG)$ is $\ell$-adically continuous and its extrapolation to $n = -1$ is given by the following:

\begin{defn}
    For every finite group $G$ and a prime $p$, we set the \textit{$p$-typical homotopy cardinality} of $BG$ to be
    \[
        |BG|_p := \frac{1}{|G|}\sum_{A}c_A[A:A_p].
    \]
\end{defn}

In particular, when $G$ is a $p$-group, the above formula for $\chi_n(BG)$ is of the form:
\[
    \lfun_{BG} := \chi_n(BG) = \frac{1}{|G|}\big(c_1 p^{a_1(n+1)} + \dots +c_r p^{a_r(n+1)}\big),
\]
for some  $a_1, \dots, a_r \in \NN$ and $c_1,\dots,c_r \in \ZZ$. It is also evident that $\sum c_i = 1$, giving
\[
    |BG|_p =  \hat{\lfun}_{BG}(-1) = \frac{1}{|G|} = |BG|.
\] 
in accordance with \Cref{Intro_Morava_Euler_Cont}.

\begin{example}
    There are $3$ abelian groups $G$ of order $8$:
    \[
        \ZZ/8 \quad,\quad Z/4 \times \ZZ/2 \quad,\quad (\ZZ/2)^3.
    \]
    As we already know, for all of them we have
    \[
        \lfun_{BG(n)} = \frac{1}{8}\cdot 8^{n+1}.
    \]
    The remaining (non-commutative) groups of order $8$ are the dihedral group $D_4$ and the quaternion group $Q_8$. For both, all the proper subgroups are abelian. In each case, there are 3 maximal ones of order $4$, and the intersection of each two is the center, which is of order 2. Thus, we get 
    \[
        \lfun_{BD_4}(n) = 
        \lfun_{BQ_8}(n) =
        \frac{1}{8}(3\cdot 4^{n+1} - 2 \cdot 2^{n+1}).
    \]
\end{example}

When $G$ is not a $p$-group the situation is a bit different. First, if $G$ is nilpotent, it is a product of $q$-groups $G_q$ for various primes $q$, and we get $f_{BG} = f_{BG_p}$. In particular,
\[
    |BG|_p = |BG_p| = \frac{1}{|G_p|}.
\]
So, $|BG|_p$ is in a precise sense the $p$-part of $|G|$. 
\begin{rem}
    More generally, by \cite{prasma2017sylow}, every nilpotent $\pi$-finite space $X$ splits uniquely as a (finite) product 
    \[
        X \simeq \prod_{q\text{ prime}} X_q \qin \Spc,
    \] 
    with $X_q$ a $\pi$-finite $q$-space. We thus have $\lfun_X = \lfun_{X_p}$ (and hence $|X|_p = |X_p|$).
\end{rem}
For a general finite group $G$, the number $|BG|_p$ need not be even a power of $p$, but we still have the following:

\begin{prop}\label{Prob_Prime2p}
    For $G$ a finite group, $|BG|_p$ is the probability of a random element of $G$ to be of order prime to $p$. Moreover, $|BG|_p$ has the same $p$-valuation as $|BG|$.
\end{prop}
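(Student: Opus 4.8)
The statement has two parts, and I would treat them separately. For the first part, the key is the explicit formula
\[
    |BG|_p = \frac{1}{|G|}\sum_A c_A [A:A_p],
\]
where $A$ ranges over abelian subgroups of $G$ and the $c_A$ are the M\"obius-type coefficients determined by $\sum_{B\le A} c_B = 1$. The plan is to interpret $[A:A_p]$ as the number of elements of $A$ of order prime to $p$: indeed, the subgroup $A_{p'}$ of $p'$-torsion in an abelian group $A$ has order exactly $[A:A_p]$, since $A$ splits as $A_p\times A_{p'}$. So $|BG|_p = \frac{1}{|G|}\sum_A c_A |A_{p'}|$. Now I want to recognize the right-hand side as counting, over $g\in G$ of order prime to $p$, the total weight $\sum_{A\ni g} c_A$. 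By the defining relation $\sum_{B\le A} c_B = 1$ applied to $A = \langle g\rangle$ (using that $\langle g\rangle$ is a single cyclic, hence abelian, subgroup and the $c$'s only depend on the subgroup, with $\sum_{B \le \langle g \rangle} c_B = 1$), and more carefully by a M\"obius-inversion bookkeeping argument, I expect $\sum_{A \ni g, A \text{ abelian}} c_A = 1$ precisely when $g$ has $p'$-order and $0$ otherwise — this is the standard fact that the $c_A$ are supported so that summing over abelian subgroups containing a fixed element detects membership in some canonical abelian subgroup, namely $\langle g\rangle$ itself. Granting this, $\sum_A c_A |A_{p'}| = \#\{g\in G : p\nmid \ord(g)\}$, and dividing by $|G|$ gives the asserted probability. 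Alternatively — and this may be cleaner — one can bypass the $c_A$ entirely: the functions $n\mapsto \chi_n(BG) = |[B\ZZ_p^n, BG]|$ count conjugacy classes of commuting $n$-tuples of $p$-elements, and one can set up a direct generating-function / transfer argument showing $\chi_{-1}(BG)$, the ``$(-1)$-tuple'' count, equals $\frac{1}{|G|}\#\{g : p \nmid \ord(g)\}$ by the orbit-counting (Burnside) lemma applied formally; I would present whichever of these the authors find shorter.

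For the second part — that $|BG|_p$ and $|BG| = 1/|G|$ have the same $p$-adic valuation — the plan is to write $|BG|_p = N_{p'}/|G|$ where $N_{p'} = \#\{g\in G: p\nmid\ord(g)\}$ is the count just established. Then $v_p(|BG|_p) = v_p(N_{p'}) - v_p(|G|)$, while $v_p(|BG|) = -v_p(|G|)$, so it suffices to show $v_p(N_{p'}) = 0$, i.e. that the number of $p'$-order elements of $G$ is prime to $p$. This is a classical fact, and the cleanest route is Frobenius's theorem (or a direct Sylow-flavored argument): let $P$ be a Sylow $p$-subgroup of $G$ and let $P$ act by conjugation on the set $S = \{g\in G : p\nmid \ord(g)\}$. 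The action is well-defined since conjugation preserves order. By the orbit decomposition, $|S| \equiv |S^P| \pmod p$, where $S^P$ is the set of $p'$-elements centralized by $P$. An element $g\in S^P$ generates a $p'$-subgroup $\langle g\rangle$ commuting with $P$, so $P\langle g\rangle$ is a subgroup with $\langle g\rangle \lhd P\langle g \rangle$ of $p'$-order; one checks $S^P$ is in fact the set of $p'$-order elements of $C_G(P)$, and a short argument (or induction on $|G|$, applying the result to the smaller group $C_G(P)$, or directly: $C_G(P) = P \times K$ need not hold, so instead observe $S^P$ with the element $1$ behaves like a group under a suitable operation) shows $|S^P| \equiv 1 \pmod p$. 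I expect the slickest finish uses the argument the acknowledgments attribute to Keidar and Ragimov; absent that, Frobenius's theorem that $\#\{g : g^m = 1\}$ is divisible by $\gcd(m,|G|)$, applied with $m$ the $p'$-part of $|G|$, gives $|G|/|P|$ divides $N_{p'}$ and hence $N_{p'}/(|G|/|P|)$ is an integer; combined with $N_{p'}\le |G|$ this pins down $v_p(N_{p'})$... actually the cleanest is just: $N_{p'}$ equals the number of solutions of $g^{m}=1$ with $m = |G|_{p'}$, which by Frobenius is a multiple of $m$; and since $m$ is prime to $p$, $v_p(N_{p'})\ge 0$ trivially, while to get equality one notes the subgroup generated by $p'$-elements...

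\textbf{Main obstacle.} The genuine difficulty is the second part — showing $p\nmid N_{p'}$ — since the first part is essentially unwinding the Hopkins--Kuhn--Ravenel formula plus orbit-counting. The subtlety is that $N_{p'}$ is \emph{not} in general the order of a subgroup (the $p'$-elements of $G$ need not form a subgroup unless $G$ has a normal $p$-complement), so one cannot simply quote Lagrange. The right tool is Frobenius's divisibility theorem: the number of elements $g$ with $g^{|G|_{p'}} = 1$ is exactly $N_{p'}$ and is divisible by $|G|_{p'} = |G|/|P|$; since this is an integer multiple of a $p'$-number and $N_{p'} \geq 1$, writing $N_{p'} = |G|_{p'}\cdot t$ we need $p \nmid t$, which follows from a second application of Frobenius or the conjugation-action congruence $N_{p'}\equiv |S^P| \pmod p$ together with the observation that $S^P = \{$ $p'$-elements of $C_G(P)\}$ has order $\equiv 1 \pmod p$ — the last congruence being exactly the base case, provable by induction on $|G|$ since $C_G(P) \subsetneq G$ unless $G = C_G(P)$, in which case $P$ is central, $G = P \times H$ with $H$ a $p'$-group, $S = H$, and $|S| = |H|$ is prime to $p$. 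I would organize the write-up around this induction, which is short and self-contained, rather than invoking Frobenius as a black box.
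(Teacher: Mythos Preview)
Your approach matches the paper's on both parts. For the first, the paper likewise interprets $[A:A_p]$ as the number of $p'$-elements in $A$ and invokes the M\"obius relation for the $c_A$ (one correction: $\sum_{A \ni g,\ A \text{ abelian}} c_A = 1$ holds for \emph{every} $g\in G$, not only those of $p'$-order --- the restriction to $p'$-elements already entered via $[A:A_p] = |A_{p'}|$, so your ``$0$ otherwise'' clause is wrong but harmless); for the second, the paper gives exactly the Sylow-conjugation induction you settle on at the end --- let a $p$-Sylow $P$ act by conjugation on the set $S$ of $p'$-elements, use $|S| \equiv |S^P| \pmod p$ to reduce to the $p'$-elements of $C_G(P)$, and induct, with base case $P$ central so $G \cong P \times K$.
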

\begin{proof}
    Every element $g \in G$ lies in some abelian subgroup $A \le G$. The number of elements of order prime to $p$ in every such $A$ is $[A\colon A_p]$. Thus, by M\"obius inversion, 
    \(
        \sum_{A}c_A[A:A_p]
    \)
    is exactly the number of elements of order prime to $p$ in $G$. For the second part, we need to show that the collection $G^{(1)} \sseq G$ of elements of order prime to $p$ is itself of size prime to $p$. Let $P \triangleleft G$ be a $p$-Sylow acting by conjugation on $G^{(1)}$. By the orbit-stabilizer theorem, we have 
    \[
        |\mathrm{Fix}_P(G^{(1)})| \equiv |G^{(1)}| \mod p.
    \]
    Setting $H = C_G(P)$, we get $\mathrm{Fix}_P(G^{(1)}) = H^{(1)}$. Thus, if $H$ is smaller then $G$, we can proceed by induction. Otherwise, we get that $P$ is central in $G$ and hence $G \simeq P \times K$ for some group $K$ of order prime to $p$, in which case the claim is obvious.
\end{proof}
 
We conclude by giving explicit formulae for $\chi_n(BG)$ and $|BG|_p$ in the case of the symmetric groups $G = \Sigma_m$. Thhe former are most neatly expressed in the form of a generating function using the so called \textit{Gaussian (a.k.a.  $q$-)binomial coefficients}:
\[
    \qbinom{n}{k}_q = 
    \frac{(1-q^n)(1-q^{n-1})\dots(1-q^{n-k+1})}{(1-q^k)(1-q^{k-1})\dots(1-q)}
\]
The following, and more elaborate versions thereof, can be found in \cite{tamanoi2001generalized}. For completeness, we provide the (easy) direct argument.
\begin{prop}\label{GenFun_Chi_BSm}
    For all $n \ge 0$, we have
    \[
        \sum_{m=0}^\infty \chi_{n}(B\Sigma_m) x^m = 
        \prod_{r = 0}^\infty (1-x^{p^r})^{-\qbinom{r+n-1}{r}_p}
    \]
\end{prop}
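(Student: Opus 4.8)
The plan is to reinterpret both sides combinatorially and then compare. For the left-hand side, recall that, as discussed above,
\[
    \chi_n(B\Sigma_m) = |[B\ZZ_p^n, B\Sigma_m]| = |\mathrm{Hom}(\ZZ_p^n,\Sigma_m)/\conj|,
\]
which is exactly the number of isomorphism classes of $\ZZ_p^n$-sets of cardinality $m$ (the action automatically factoring through a finite abelian $p$-group, since the underlying set is finite). Thus $\sum_{m\ge 0}\chi_n(B\Sigma_m)x^m$ is the generating function counting finite $\ZZ_p^n$-sets by cardinality.

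First I would invoke the orbit decomposition: every finite $\ZZ_p^n$-set is, uniquely up to isomorphism, a disjoint union of transitive ones, and since $\ZZ_p^n$ is abelian the transitive $\ZZ_p^n$-sets up to isomorphism are precisely the coset spaces $\ZZ_p^n/H$ for $H\le\ZZ_p^n$ an open (i.e. finite-index) subgroup, with $|\ZZ_p^n/H| = [\ZZ_p^n : H]$. Choosing a $\ZZ_p^n$-set is therefore the same as choosing a finite multiset of such orbits, giving the Euler-product-type identity
\[
    \sum_{m=0}^\infty \chi_n(B\Sigma_m)x^m = \prod_{H\le\ZZ_p^n}\frac{1}{1-x^{[\ZZ_p^n:H]}},
\]
the product ranging over all open subgroups $H$. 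Since every such index is a power of $p$, grouping the factors by the value of the index reduces the theorem to the group-theoretic claim that the number $a_{n,r}$ of open subgroups of $\ZZ_p^n$ of index $p^r$ equals $\qbinom{r+n-1}{r}_p$.

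For this count, I would use that every open subgroup of index $p^r$ contains $p^r\ZZ_p^n$, together with the fact that a full-rank sublattice of $\ZZ_p^n$ has a unique Hermite normal form: an upper-triangular basis matrix with diagonal $(p^{c_1},\dots,p^{c_n})$, where $c_1+\dots+c_n = r$, and whose entry in row $i$ of column $j>i$ ranges over a set of size $p^{c_i}$. Summing the resulting count $\prod_{i=1}^n p^{c_i(n-i)}$ over all compositions $c_1+\dots+c_n = r$ gives
\[
    \sum_{r\ge 0} a_{n,r}\, t^r = \prod_{j=0}^{n-1}\frac{1}{1-p^j t},
\]
and the $q$-binomial theorem identifies the coefficient of $t^r$ on the right with $\qbinom{r+n-1}{r}_p$. (Equivalently, $a_{n,r}$ is the classical number of index-$p^r$ sublattices of $\ZZ^n$, read off from the subgroup zeta function $\zeta(s)\zeta(s-1)\cdots\zeta(s-n+1)$.) Feeding $a_{n,r} = \qbinom{r+n-1}{r}_p$ back into the Euler product yields $\prod_{r\ge 0}(1-x^{p^r})^{-\qbinom{r+n-1}{r}_p}$, as claimed.

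The only mildly delicate point is the subgroup count in the third step — pinning down the exact shape of the Hermite normal form over $\ZZ_p$ and matching the resulting generating function cleanly with the $q$-binomial series; everything else is formal manipulation of generating functions, together with the standard dictionary relating maps into $B\Sigma_m$, finite covers, and finite permutation representations.
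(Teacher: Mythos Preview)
Your proof is correct and follows essentially the same route as the paper's: both reduce the identity to the count of open subgroups of $\ZZ_p^n$ of index $p^r$, which is $\qbinom{r+n-1}{r}_p$. The only difference is packaging: the paper rewrites $\chi_n(BG)$ via Burnside's lemma as $\frac{1}{|G|}|\hom(\ZZ_p^n\times\ZZ,G)|$ and then cites a general generating-function identity (and a separate reference for the subgroup count), whereas you unpack both steps directly---interpreting $\chi_n(B\Sigma_m)$ as isomorphism classes of $\ZZ_p^n$-sets, using orbit decomposition for the Euler product, and Hermite normal form for the subgroup count---making your argument more self-contained.
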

\begin{proof}
    By Burnside's lemma, we have
    \[
        |G^{(n)}/\conj| = 
        \frac{1}{|G|}\sum_{\vec{g} \in G^{(n)}} |C_G(\Vec{g})|.
    \]
    Observe that $|C_G(\Vec{g})|$ is the number of ways to to extend the commuting $n$-tuple $\vec{g} = (g_1,\dots,g_n)$ of $p$-power elements to a commuting $(n+1)$-tuple $(g_1,\dots,g_n,h)$, where $h \in G$ is arbitrary. In other words, we get
    \[
        \chi_n(BG) = \frac{1}{|G|}|\hom(\ZZ_p^n \times \ZZ, G)|.
    \]
    By \cite[Exercise 5.13]{gessel2002enumerative}, for every finitely generated group $H$, we have
    \[
        \sum_{m=0}^\infty \frac{|\hom(H \times \ZZ, \Sigma_m)|}{m!}x^m = 
        \prod_{d=1}^\infty (1-x^d)^{-c_d(H)},
    \]
    where $c_d(H)$ is the number of conjugacy classes of index $d$ subgroups of $H$. Thus, putting $H = \ZZ_p^n$, we get\footnote{While $\ZZ_p^n$ is not quite finitely generated, replacing it with $(\ZZ/p^N)^n$ for $N\gg 0$ does not change the first (arbitrarily many) first terms of both sides of the equation, so we can still apply the formula to it.}
    \[
        \sum_{m=0}^\infty \chi_n(B\Sigma_m) x^m = 
        \sum_{m=0}^\infty \frac{|\hom(\ZZ_p^n \times \ZZ, \Sigma_m)|}{m!} =
        \prod_{r = 0}^\infty (1-x^{p^r})^{-c_{p^r}(\ZZ_p^n)}.
    \]
    Finally, we have (see, e.g., \cite{gruber1997alternative})
    \[
        c_{p^r}(\ZZ_p^n) = 
        \qbinom{r+n-1}{r}_p.
    \]    
\end{proof}

\begin{example}
    For $n=1$, we get the generating function
    \(
        \prod_{r\ge 0}\frac{1}{1-x^{p^r}}.
    \)
    This is the $p$-typical analogue of the usual \textit{partition function}, and it counts the number of partitions of $m$ with parts of $p$-power order. This can be thought of as the number possible cycle types of a permutation on $m$ elements, such that all the cycles have length prime to $p$, which indeed coincides with $|\Sigma_m^{(1)}/\conj|$.
\end{example}

We can also give a very simple formula for the $p$-typical homotopy cardinality of $B\Sigma_m$.

\begin{prop}\label{pTypical_Card_Sm}
    For every natural number $m$ and a prime $p$, we have
    \[
        |B\Sigma_m|_p = 
        \prod_{
        \substack{
             1 \le d \le m,  \\
            p\mid d 
        }}
        \Big( 1-\frac{1}{d} \Big).
    \]
\end{prop}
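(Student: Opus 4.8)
The plan is to reduce the claim to a short generating-function computation. By \Cref{Prob_Prime2p}, $|B\Sigma_m|_p$ equals the probability that a uniformly random permutation $\sigma\in\Sigma_m$ has order prime to $p$; since $\sigma$ has order prime to $p$ precisely when all of its cycles have length prime to $p$, this probability is $a_m/m!$, where $a_m$ denotes the number of permutations of $\{1,\dots,m\}$ all of whose cycle lengths are coprime to $p$. I would then compute the exponential generating function of the $a_m$. By the exponential formula (applied to permutations whose cycles all have length coprime to $p$, a cyclic permutation on an $n$-element set carrying weight $(n-1)!$ when $p\nmid n$ and $0$ otherwise),
\[
    F(x):=\sum_{m\ge 0}\frac{a_m}{m!}x^m=\exp\!\Big(\sum_{\substack{d\ge 1\\ p\nmid d}}\frac{x^d}{d}\Big),
\]
and using $\sum_{d\ge 1}x^d/d=-\log(1-x)$ together with $\sum_{p\mid d}x^d/d=\tfrac1p\sum_{k\ge 1}(x^p)^k/k=-\tfrac1p\log(1-x^p)$, this collapses to the formal power series identity over $\QQ$
\[
    F(x)=\frac{(1-x^p)^{1/p}}{1-x},\qquad (1-x^p)^{1/p}:=\sum_{k\ge 0}\binom{1/p}{k}(-1)^kx^{pk}.
\]
(Alternatively, the same formula drops out of \Cref{GenFun_Chi_BSm} upon substituting $n=-1$: the exponent $\qbinom{r-2}{r}_p$ equals $1$ for $r=0$, equals $-1/p$ for $r=1$, and vanishes for $r\ge 2$; the substitution is legitimate since, for each fixed $m$, the coefficient of $x^m$ is a fixed polynomial in finitely many exponents $\qbinom{r+n-1}{r}_p$, each of which is $\ell$-adically continuous in $n$ because $p\equiv 1\pmod\ell$.)

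It then remains to match the coefficients of $F$ with the claimed products. Setting $d_m:=\prod_{1\le d\le m,\ p\mid d}(1-1/d)$, the sequence $(d_m)$ is constant on each range $pk\le m<p(k+1)$ and satisfies $d_{pk}=(1-1/(pk))\,d_{pk-1}$ with $d_{pk-1}=\prod_{j=1}^{k-1}(1-1/(pj))$. Hence
\[
    (1-x)\sum_{m\ge 0}d_mx^m=1+\sum_{k\ge 1}(d_{pk}-d_{pk-1})x^{pk}=1-\sum_{k\ge 1}\frac{d_{pk-1}}{pk}x^{pk},
\]
so comparing with $(1-x)F(x)=(1-x^p)^{1/p}=\sum_{k\ge 0}(-1)^k\binom{1/p}{k}x^{pk}$, the statement reduces to the numerical identity $(-1)^k\binom{1/p}{k}=-d_{pk-1}/(pk)$ for $k\ge 1$. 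I would verify this by writing $\binom{1/p}{k}=\frac{1}{p^k k!}\prod_{j=0}^{k-1}(1-pj)$ and using $\prod_{j=1}^{k-1}(1-pj)=(-1)^{k-1}\prod_{j=1}^{k-1}(pj-1)=(-1)^{k-1}p^{k-1}(k-1)!\,d_{pk-1}$.

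There is no real obstacle here: once $F(x)$ is in closed form, everything is routine. The only points needing a little care are the legitimacy of the formal manipulations with $(1-x^p)^{1/p}\in\QQ[[x]]$ (immediate) and the bookkeeping in the final binomial identity. It is also worth remarking that, in contrast with \Cref{Intro_Generalzied_Homotopy_Cardinality}, this computation requires no restriction on $p$ and in particular covers $p=2$, since it rests only on \Cref{Prob_Prime2p} and the character-theoretic input of \cite{hopkins2000generalized}.
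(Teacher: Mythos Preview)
Your argument is correct and matches the paper's approach: invoke \Cref{Prob_Prime2p} to reduce to the probability that a random permutation has all cycles of length prime to $p$, then evaluate this probability. The paper's proof simply cites a reference for this last step, while the remark immediately following it sketches exactly your generating-function route (obtaining $\sum_m |B\Sigma_m|_p\,x^m=(1-x^p)^{1/p}/(1-x)$, there via extrapolating \Cref{GenFun_Chi_BSm} to $n=-1$, and then Taylor-expanding); you have simply carried out both the exponential-formula derivation and the coefficient extraction in full.
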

\begin{proof}
    By  \Cref{Prob_Prime2p}, $ |B\Sigma_m|_p$ is the probability of a random permutation on $m$ elements to be of order prime to $p$, or equivalently, with all cycles of length prime to $p$. This is well known to be given by the above formula (see \cite[Theorem 2]{bolker1980counting}).
\end{proof}

For an odd $p$, we can use $\ell$-adic extrapolation to deduce the above formula for $|B\Sigma_m|_p$ from \Cref{GenFun_Chi_BSm}. Indeed, it is not hard to see that the $p$-binomial coefficient $\qbinom{a}{b}_p$ are $\ell$-adically continuous in $a$. In fact, so is the whole expression
\[
     \prod_{r = 0}^\infty (1-x^{p^r})^{-\qbinom{r+n-1}{r}_p}
\]
as a function of $n$. To evaluate it at $n=-1$ we observe that the only non-trivial terms are for $r=0,1$, in which case we have (see \cite{formichella2019gaussian} for more on $q$-binomial coefficients with negative arguments),
\[
    \qbinom{-2}{0}_p = 1 \quad,\quad 
    \qbinom{-1}{1}_p = -\frac{1}{p}.
\]
We thus get,
 \[
    \sum_{m=0}^{\infty} |B\Sigma_m|_p\: x^m = 
    \frac{(1-x^p)^{1/p}}{1-x},
\]
from which \Cref{pTypical_Card_Sm} readily follows by a standard Taylor series expension.
 

\subsubsection{Cup-power extensions}

As mentioned above, for a general $\pi$-finite $p$-space $X$ the function $\lfun_X$ would depend on the Postnikov invariants of $X$, as well as its homotopy groups. We shall demonstrate this by computing $\lfun_X$ for a certain family of spaces with non-trivial Postnikov invariants. The spaces $B^d C_p$ represent $\FF_p$-cohomology and so the operation
\[
    H^d( - ; \FF_p) \oto{x \mapsto x^{\cup m}} H^{md}( - ; \FF_p)
\]
is represented by a certain map $P^m \colon B^dC_p \to B^{2d}C_p$. We let $X_{d,m}$ be the homotopy fiber of this map, so that we have a fiber sequence
\[
    X_{d,m} \too B^dC_p \oto{\ P^m\ } B^{md}C_p.
\]
From the long exact sequence in homotopy groups we immediately deduce that $X_{d,m}$ is connected and that 
\[
    \pi_i(X_{d,m}) \simeq 
    \begin{cases}
        C_p & i=d,md-1 \\
        0 & \text{else}.
    \end{cases}
\]

We begin by reducing the problem to combinatorics. Let $s_{d,m}(n)$ be the number of elements 
\[
    \omega \in \bigwedge^d \FF_p\{x_1,\dots,x_n\},
    \quad \text{such that} \quad \omega^{\wedge m} = 0.
\] 
We have the following:
\begin{prop}
    For all $n \ge 0$ we have
    \[
        |L^n X_{d,m}| = 
        p^{\binom{n-1}{md-1}-\binom{n-1}{d-1}} s_{d,m}(n).
    \]
\end{prop}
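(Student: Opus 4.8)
The plan is to compute $|L^n X_{d,m}|$ by looping the fiber sequence $n$ times and then evaluating homotopy cardinalities of the resulting pieces, keeping track of how the connected components of $L^n B^d C_p$ distribute over the fiber.

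First I would observe that $L^n$ commutes with the fiber sequence, so
\[
    L^n X_{d,m} \too L^n B^d C_p \oto{\ L^n P^m\ } L^n B^{md} C_p .
\]
Both $L^n B^d C_p$ and $L^n B^{md} C_p$ are loop spaces, so each is a disjoint union of isomorphic connected components: $\pi_0(L^n B^d C_p) \simeq \bigwedge^d \FF_p\{x_1,\dots,x_n\}$ with each component $\simeq \prod_{k=0}^{d} B^k C_p^{\binom{n}{k}}$ (by the loop-space computation in the preceding Example, since $B^d C_p$ has a single nonzero homotopy group $C_p$ in degree $d$; likewise for $B^{md}C_p$). By additivity of homotopy cardinality over components, $|L^n X_{d,m}|$ is the sum over $\gamma \in \pi_0(L^n B^d C_p)$ of the homotopy cardinality of the fiber component of $L^n X_{d,m}$ lying over $\gamma$. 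Crucially, the map $L^n P^m$ on $\pi_0$ is exactly $\omega \mapsto \omega^{\wedge m}$ in $\bigwedge^{md}\FF_p\{x_1,\dots,x_n\}$ — this is where $s_{d,m}(n) = |\{\omega : \omega^{\wedge m} = 0\}|$ enters: $\gamma$ maps into the basepoint component of $L^n B^{md} C_p$ iff $\omega_\gamma^{\wedge m} = 0$.

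Next I would analyze a single fiber. Over a component $\gamma$ with $\omega_\gamma^{\wedge m} \ne 0$, the fiber component maps to a non-basepoint component of $L^n B^{md}C_p$, which is nonetheless isomorphic to the basepoint component (a connected $\pi$-finite space), so the fiber is a torsor and its homotopy cardinality equals that of the true homotopy fiber of $L^n B^d C_p \to L^n B^{md} C_p$ at that component. In all cases the relevant fiber component $F_\gamma$ sits in a fiber sequence $\Omega(L^n B^{md}C_p)_0 \to F_\gamma \to (L^n B^d C_p)_0$ up to the appropriate shift, so its homotopy cardinality is
\[
    |F_\gamma| = \frac{|(L^n B^d C_p)_0|}{|(L^n B^{md} C_p)_0|} = \frac{\prod_{k=0}^{d} p^{(-1)^k\binom{n}{k}}}{\prod_{k=0}^{md} p^{(-1)^k\binom{n}{k}}} = p^{\sum_{k=0}^{d}(-1)^k\binom{n}{k} - \sum_{k=0}^{md}(-1)^k\binom{n}{k}},
\]
independent of $\gamma$. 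Using the identity $\sum_{k=0}^{j}(-1)^k\binom{n}{k} = (-1)^j\binom{n-1}{j}$ (already invoked in the proof of \Cref{G_Formula}), this exponent becomes $(-1)^d\binom{n-1}{d} - (-1)^{md}\binom{n-1}{md}$; since $p$ is odd this is where I would need to be slightly careful with signs, but the claimed answer $p^{\binom{n-1}{md-1} - \binom{n-1}{d-1}}$ should emerge after rewriting $\binom{n-1}{j}$ in terms of $\binom{n-1}{j-1}$ appropriately (using $\binom{n}{j} = \binom{n-1}{j} + \binom{n-1}{j-1}$ to convert $(-1)^d\binom{n-1}{d}$-type terms into the $\binom{n-1}{d-1}$ form, and noting the fiber sequence contributes $\Omega$ of the base, shifting degrees by one). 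Finally, summing $|F_\gamma|$ over exactly the $s_{d,m}(n)$ components $\gamma$ with $\omega_\gamma^{\wedge m} = 0$ gives
\[
    |L^n X_{d,m}| = p^{\binom{n-1}{md-1} - \binom{n-1}{d-1}}\, s_{d,m}(n).
\]

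The main obstacle I anticipate is bookkeeping the degree shift and the signs correctly: one must be sure that (i) the map on $\pi_0$ induced by $P^m$ really is cup-$m$-th-power on exterior algebra generators — this follows from $P^m$ representing $x\mapsto x^{\cup m}$ and the identification $\pi_0 L^n B^d C_p \simeq H^d(\TT^n;\FF_p)$ together with the fact that $L^n$ of a cohomology operation is computed on cohomology of $\TT^n$ — and (ii) that the exponent of $p$ coming from the "extra" homotopy of the fiber (the degree-$(md-1)$ group $C_p$ in $X_{d,m}$, looped $n$ times, versus the degree-$d$ group) assembles exactly into $\binom{n-1}{md-1} - \binom{n-1}{d-1}$ rather than some off-by-one variant. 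Both are routine but error-prone; the conceptual content is entirely the component-counting via $s_{d,m}(n)$ plus the loop-space cardinality formula.
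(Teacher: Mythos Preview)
Your strategy is the same as the paper's: apply $L^n$ to the fiber sequence, note that only the components of $L^n B^dC_p$ landing in the basepoint component of $L^n B^{md}C_p$ contribute (there are $s_{d,m}(n)$ of them, since $\pi_0(L^nP^m)$ is $\omega\mapsto\omega^{\wedge m}$), and compute the cardinality of each such piece as $|(L^nB^dC_p)_0|/|(L^nB^{md}C_p)_0|$ by multiplicativity. The paper packages this as a single fiber sequence $L^nX_{d,m}\to L_0^nB^dC_p\to L_\old^nB^{md}C_p$ with connected base, then invokes \Cref{G_Formula} directly; your component-by-component sum is equivalent.

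The execution, however, contains a genuine error. The product decomposition of $L^nB^dC_p$ is \emph{not} $\prod_{k=0}^d (B^kC_p)^{\binom{n}{k}}$: iterating $L(B^jC_p)\simeq B^jC_p\times B^{j-1}C_p$ gives $L^nB^dC_p\simeq\prod_{k=0}^d(B^{d-k}C_p)^{\binom{n}{k}}$, so $\pi_j$ of the basepoint component is $C_p^{\binom{n}{d-j}}$, not $C_p^{\binom{n}{j}}$. With the correct indexing,
\[
|(L^nB^dC_p)_0|=\prod_{j=1}^d p^{(-1)^j\binom{n}{d-j}}=p^{-\binom{n-1}{d-1}},
\]
which is exactly \Cref{G_Formula}, and the ratio $p^{\binom{n-1}{md-1}-\binom{n-1}{d-1}}$ follows with no sign negotiation. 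Your intermediate exponent $(-1)^d\binom{n-1}{d}-(-1)^{md}\binom{n-1}{md}$ is not equal to the claimed one (for $d=m=2$, $n=5$ it gives $5$ rather than $0$), so the ``rewriting'' you defer would not have gone through. Also, the paragraph on components with $\omega_\gamma^{\wedge m}\neq 0$ is unnecessary: $L^nX_{d,m}$ is the fiber over the basepoint, so those components contribute nothing.
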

\begin{proof}
    For every $n \in \NN$ we get a fiber sequence
    \[
        L^n X_{d,m} \too L^n B^dC_p \oto{\ P^m \ } L^n B^{md} C_p.
    \]
    Restricting to the connected component of the base point of $L^n B^{md} C_p$ and its preimage under $P^m$, we get a fiber sequence
    \[
        L^n X_{d,m} \too L_0^d BC_p \oto{\ P^m \ } L_\old^n B^{md} C_p
    \]
    with a connected base. We observe that on $\pi_0$ the map $P^m$ takes an element 
    \[
        \omega \in \bigwedge^d \FF_p\{x_1,\dots,x_n\} =
        H^d(\TT^n ; \FF_p) = \pi_0(L^n B^d C_p)
    \]
    to its $m$-th power. Thus, $|\pi_0(L_0^nB^d C_p)| = s_{d,m}(n)$, so we get
    \[
        |L^n X_{d,m}| = 
        |L^n_\old B^{md}C_p|^{-1}|L_\old^n B^dC_p| s_{d,m}(n) = 
        p^{\binom{n-1}{md-1}-\binom{n-1}{d-1}} s_{d,m}(n).
    \]
\end{proof}

For $d=2$ and $p\neq 2$ we can work out explicitly the value of $s_{d,m}(n)$ and hence of $f_{X_{d,m}}(n)$. 

\begin{prop}
    For $p\neq 2$ and all $m\ge1$, we have
    \[
        \lfun_{X_{2,m}}(n) = 
        p^{\binom{n}{2m-1}-n}
        \sum_{k=0}^{m-1} 
        p^{k(k-1)} \bigg( \prod_{i=1}^k(p^{2i - 1} - 1) \bigg) \qbinom{n+1}{2k}_p.
    \]
\end{prop}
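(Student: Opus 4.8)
The plan is to reduce the assertion to a purely linear-algebraic count over $\FF_p$ and then evaluate that count by stratifying $\bigwedge^2\FF_p^N$ by the rank of the associated alternating form. By the preceding proposition $|L^{n+1}X_{2,m}| = p^{\binom{n}{2m-1}-\binom{n}{1}}\,s_{2,m}(n+1)$, and since $\lfun_{X_{2,m}}(n)=\Lfun_{X_{2,m}}(n+1)=|L^{n+1}X_{2,m}|$ and $\binom{n}{1}=n$, the asserted formula is equivalent to
\[
    s_{2,m}(N)\;=\;\sum_{k=0}^{m-1} p^{k(k-1)}\Big(\prod_{i=1}^{k}(p^{2i-1}-1)\Big)\qbinom{N}{2k}_{p},
    \qquad N:=n+1,
\]
where, unwinding the definition, $s_{2,m}(N)$ counts the elements $\omega$ of the degree-$2$ part $\bigwedge^2\FF_p^{N}$ of $H^*(\TT^N;\FF_p)$ with $\omega^{\wedge m}=0$.

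First I would recast the condition $\omega^{\wedge m}=0$ as a bound on the rank of $\omega$. Since $p\neq 2$, every alternating form over $\FF_p$ admits a symplectic basis, so one may write $\omega=e_1\wedge f_1+\dots+e_r\wedge f_r$ with $e_1,f_1,\dots,e_r,f_r$ linearly independent, and $2r$ — the rank of $\omega$ — is intrinsic to $\omega$. The two-forms $e_i\wedge f_i$ have even degree, hence commute and square to zero in the exterior algebra, so expanding the power gives $\omega^{\wedge m}=m!\sum_{|I|=m}\prod_{i\in I}(e_i\wedge f_i)$, a linear combination of pairwise distinct (hence linearly independent) basis wedges indexed by the $m$-element subsets $I\subseteq\{1,\dots,r\}$. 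As long as $m!$ is invertible in $\FF_p$ this vanishes exactly when there are no such subsets, that is, when $r\le m-1$; the invertibility of $m!$ is the one place beyond $p\neq 2$ where the characteristic intervenes, so for $m\ge p$ the displayed identity should be read with this caveat, whereas for $m<p$ it holds verbatim. Thus $s_{2,m}(N)=\sum_{k=0}^{m-1}a_k$, where $a_k$ is the number of $\omega\in\bigwedge^2\FF_p^{N}$ of rank exactly $2k$.

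Next I would compute $a_k$. Every $\omega$ of rank $2k$ is supported on a unique $2k$-dimensional subspace $W\subseteq\FF_p^{N}$ — the image of the contraction $(\FF_p^{N})^{\vee}\to\FF_p^{N}$ determined by $\omega$ — and on $W$ it is a full-rank element of $\bigwedge^2 W$. There are $\qbinom{N}{2k}_{p}$ choices of $W$, and by Witt's theorem $\GL(W)$ acts transitively on the full-rank elements of $\bigwedge^2 W$ with stabilizer a copy of the symplectic group; hence the number of full-rank elements of $\bigwedge^2\FF_p^{2k}$ is $|\GL_{2k}(\FF_p)|/|\mathrm{Sp}_{2k}(\FF_p)|$. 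Substituting $|\GL_{2k}(\FF_p)|=p^{\binom{2k}{2}}\prod_{j=1}^{2k}(p^{j}-1)$ and $|\mathrm{Sp}_{2k}(\FF_p)|=p^{k^{2}}\prod_{j=1}^{k}(p^{2j}-1)$ and simplifying — $\binom{2k}{2}-k^{2}=k(k-1)$, and the even factors of the numerator cancel the denominator, leaving the odd ones — gives $a_k=p^{k(k-1)}\big(\prod_{i=1}^{k}(p^{2i-1}-1)\big)\qbinom{N}{2k}_{p}$, which is precisely the $k$-th summand; assembling the strata yields the identity, hence the proposition.

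I expect the genuine work to be in the first step: establishing that $\omega^{\wedge m}=0$ if and only if the rank of $\omega$ is at most $2(m-1)$. This rests on the existence and uniqueness of the symplectic normal form over $\FF_p$ and on tracking signs through the exterior-algebra expansion, and it is the only ingredient sensitive to the characteristic. The rest — the rank stratification of $\bigwedge^2\FF_p^{N}$ and the $\GL/\mathrm{Sp}$ coset count — is routine finite-group-of-Lie-type bookkeeping, and the $q$-binomial coefficients appear simply as the count of subspaces of $\FF_p^{N}$, exactly as in the $B\Sigma_m$ computation earlier in this section.
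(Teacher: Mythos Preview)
Your argument follows the same route as the paper's: reduce to counting alternating $2$-forms on $\FF_p^{\,n+1}$ of rank below $2m$ and stratify by rank; the only differences are that the paper asserts the equivalence $\omega^{m}=0\Leftrightarrow \rk\omega<2m$ without justification and quotes the count of rank-$2k$ skew-symmetric matrices from the literature rather than deriving it as $|\GL_{2k}(\FF_p)|/|\mathrm{Sp}_{2k}(\FF_p)|$. Your caveat that this rank criterion requires $m<p$ (so that $m!$ is a unit in $\FF_p$) is correct and is a hypothesis the paper does not make explicit.
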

\begin{proof}
    To every $2$-form $\omega$ we have an associated skew-symmetric matrix $A \in M_n(\FF_p)$, and $\omega^m = 0$, if and only if $\rk(A) < 2m$.  The number of skew-symmetric $n\times n$ matrices of rank $2k$ is given by (e.g. \cite[Proposition 3.8]{lewis2010matrices})
    \[
        r_k(n) = p^{k(k-1)} \bigg( \prod_{i=1}^k(p^{2i - 1} - 1) \bigg) \qbinom{n}{2k}_p,
    \]
    and 
    \[
        s_{2,m}(n) = r_0(n) + r_2(n) + \dots + r_{2m-2}(n).
    \]
    We thus get
    \[
        |L^n X_{2,m}| = 
        p^{\binom{n-1}{2m-1}-n+1}\sum_{k=0}^{m-1}r_{2k}(n) = 
        p^{\binom{n-1}{2m-1}-n+1}
        \sum_{k=0}^{m-1} 
        p^{k(k-1)} \bigg( \prod_{i=1}^k(p^{2i - 1} - 1) \bigg) \qbinom{n}{2k}_p.
    \]
    Finally, the claim follows from the relation $\lfun_X(n) = |L^{n+1}X|$.
\end{proof}

\begin{example}    
    For $m =2$, we have $\pi_2(X_{2,2}) = \pi_3(X_{2,2}) = C_p$ with all other homotopy groups zero, so in particular $|X_{2,2}| = 1$. On the other hand, 
    \[
        \lfun_{X_{2,2}}(n) = \frac{p^{n+1} + p^{2-n} - p - 1}{p^2 - 1}
        p^{\binom{n}{3}},
    \]
    which indeed satisfies $\hat{\lfun}_{X_{2,2}}(-1) = 1$.
\end{example}

For higher values of $d$, the problem of determining explicitly $s_{d,m}(n)$, and hence $\lfun_{X_{d,m}}(n)$, seems to be considerably more difficult.

\begin{rem}\label{Expolynomiality}
    In all the explicit examples we had so far, the function $\lfun_X(n)$ had the form of a linear combination over the rational numbers of functions of the form $p^{f(n)}$, where $f(n)$ is an (integer valued) polynomial over the rational numbers. We leave it to the reader to decide whether this is an indication of a general phenomena, or the limitation of our computational ability. In particular, it would be interesting to know whether this is the case for $\lfun_{X_{d,m}}(n)$ (equivalently $s_{d,m}(n)$) when $d > 2$.
\end{rem}

\section{$\ell$-Adically Continuous Resolutions}\label{Mysteries}

So far, we have only discussed the extent to which the classical Euler characteristic and the homotopy cardinality function of Baez and Dolan are compatible (in terms of the existence of a common extension). However, our results can also offer some insights into the heuristics for why these numbers ``want to be the same'' even when formally only one of them is well-defined. As mentioned in the introduction, the most basic such heuristic concerns the classifying space of a finite group $G$, where the standard simplicial structure on $BG$ yields the divergent series expression
\[
    \chi(BG) = \sum_{n = 0}^\infty (-1)^n(|G|-1)^n. 
\]

One usually proceeds to evaluate this divergent sum to the desired result $1/|G|$ using some sort of a regularization technique, such as the analytic continuation to the point $t = 1$ of the power series function
\[
    f(t) := 
    \sum_{n = 0}^\infty (-1)^n(|G| - 1)^nt^n = 
    \frac{1}{1 + (|G| - 1)t}.
\]
We observe that when $G$ is a $p$-group and $\ell \mid (p-1)$, the series defining $\chi(BG)$ also converges \textit{$\ell$-adically} to $1/|BG|$, without any need for regularization. Furthermore, it can be viewed as a continuity result for the generalized homotopy cardinality function. Namely, both $BG$, being a $\pi$-finite $p$-space, and its simplicial skeleta $\sk^n(BG)$, being finite spaces, are in $\Spc^{\psmall}$, and we have
\[
    |\sk^n(BG)| \oto{n\to \infty} 
    |BG| \qin \QQ \sseq \QQ_\ell.
\]
This formulation is not only more inline with the perspective taken in this note, but is in fact a special case of a more general continuity phenomenon. 

\begin{defn}
    Given $X\in \Spc^{\psmall}$, an \textit{$\ell$-adically continuous resolution} of $X$ is a simplicial diagram $X_\bullet \colon \Delta^\op \to \Spc^{\psmall}$, such that $\colim X_\bullet = X$, and the sequence $x_n = |X_n|$ extends to a continuous function $\ZZ_\ell \to \QQ_\ell$ whose value at $-1 \in \ZZ_\ell$ is $|X|$.
\end{defn}

If we make $X_\bullet$ an augmented simplicial space by appending $X_{-1} := X$, the last condition reads suggestively as
\[
    \lim_{n \to -1}x_n = x_{-1}.
\]
We shall show that for an $\ell$-adically continuous resolution of $X$, the partial realizations are also in $\Spc^{\psmall}$, and their cardinalities converge $\ell$-adically to $|X|$. We supplement this by establishing  $\ell$-adical continuity for a general family of simplicial resolutions including the bar construction of a finite $p$-group. 

We begin with a general observation regarding the cardinalities of partial realizations of simplicial objects.

\begin{prop}\label{Cnt_Simp_Binomial}
    Let $X_\bullet \colon \Delta^\op \to \Spc^{\psmall}$. The $n$-th partial realizations $\sk^n X$ are in $\Spc^{\psmall}$ and their cardinalities are given by 
    \[
        |\sk^n X| = \sum_{k=0}^n (-1)^k \cl{x}_k \qin \QQ,
    \]
    where $(\cl{x}_k)$ is the inverse binomial transform of the sequence $x_n = |X_n|$.
\end{prop}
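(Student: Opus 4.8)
The plan is to reduce the statement to the standard skeletal filtration of a geometric realization, together with the additivity and multiplicativity properties of $|-|$ from \Cref{Intro_Generalzied_Homotopy_Cardinality} and the fact (recorded in \Cref{Sec:Proofs} via the constancy of $\chi_n$ on finite spaces) that $|-|$ restricts to the classical Euler characteristic $\chi$ on finite spaces. For the first assertion, observe that the $n$-th partial realization $\sk^n X$ is the colimit of the restriction of $X_\bullet$ to $\Delta^{\op}_{\le n}$, the full subcategory on $[0],\dots,[n]$. Since $\Delta_{\le n}$ has finitely many objects and finite mapping sets, this is a \emph{finite} colimit of objects of $\Spc^{\psmall}$; as $\Spc^{\psmall}$ is closed under finite colimits by construction, $\sk^n X\in\Spc^{\psmall}$. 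For the same reason all auxiliary spaces appearing below — the latching objects $\mathrm{Lat}_n$ and the products $S^{n-1}\times X_n$ — lie in $\Spc^{\psmall}$, so $|-|$ is defined, additive, and multiplicative on them.

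For the formula I would induct on $n$ using the classical homotopy pushout square relating consecutive skeleta of a simplicial object in an $\infty$-category (the skeletal filtration of $|X_\bullet|$; see e.g.\ Lurie's \emph{Higher Topos Theory}): writing $\mathrm{Lat}_n$ for the $n$-th latching object and using $\Delta^n\simeq\pt$, $\partial\Delta^n\simeq S^{n-1}$, one has
\[
  \sk^n X\ \simeq\ \sk^{n-1}X\ \cup^h_{\,Q_n}\ X_n,
  \qquad
  Q_n\ =\ \mathrm{Lat}_n\ \cup^h_{\,S^{n-1}\times\mathrm{Lat}_n}\ \bigl(S^{n-1}\times X_n\bigr).
\]
Applying additivity and multiplicativity of $|-|$ together with $|S^{n-1}|=\chi(S^{n-1})=1+(-1)^{n-1}$, the resulting four-term inclusion--exclusion collapses, after a short computation, to the recursion
\[
  |\sk^n X|\ =\ |\sk^{n-1}X|\ +\ (-1)^n\bigl(x_n-|\mathrm{Lat}_n|\bigr),
\]
with base case $\sk^0 X=X_0$, hence $|\sk^0 X|=x_0=\cl{x}_0$.

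It then remains to identify $x_n-|\mathrm{Lat}_n|$ with $\cl{x}_n$. For this I would compute $|\mathrm{Lat}_n|$ directly: the latching object is the colimit of $X_\bullet$ over the finite poset of proper surjections $[n]\twoheadrightarrow[k]$, i.e.\ the punctured $n$-cube of proper subsets of $\{1,\dots,n\}$, along which the diagram depends only on the cardinality of the subset. Running the alternating-sign inclusion--exclusion for $|-|$ over this poset — equivalently, noting that $|-|$ is determined by the additive structure and so reproduces the classical count of degenerate $n$-simplices — yields $|\mathrm{Lat}_n|=\sum_{k=0}^{n-1}\binom{n}{k}\cl{x}_k$. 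Since the defining relation $\cl{x}_n=\sum_{k=0}^n(-1)^{n-k}\binom{n}{k}x_k$ inverts to $x_n=\sum_{k=0}^n\binom{n}{k}\cl{x}_k$, we obtain $x_n-|\mathrm{Lat}_n|=\cl{x}_n$, and summing the recursion over $k\le n$ gives $|\sk^n X|=\sum_{k=0}^n(-1)^k\cl{x}_k$.

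The one genuinely delicate step is the computation of $|\mathrm{Lat}_n|$: setting up the punctured-cube colimit presentation of the latching object cleanly and pushing the inclusion--exclusion through it without a sign slip is the part most likely to be fiddly. The collapse of the skeletal pushout — keeping track of $|S^{n-1}|$ and of the alternating signs — is routine but also wants care; beyond these two points everything is purely formal once the properties of $|-|$ are in hand.
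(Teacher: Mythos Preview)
Your overall strategy via the latching/skeletal filtration is sound and would go through, but there is one genuine error and one point where your route is less efficient than the paper's.

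\textbf{The error.} Your opening claim that the colimit over $\Delta_{\le n}^{\op}$ is a finite colimit ``since $\Delta_{\le n}$ has finitely many objects and finite mapping sets'' is false in the $\infty$-categorical sense that matters here. The category $\Delta_{\le n}$ has nontrivial endomorphisms (and back-and-forth composites of faces and degeneracies), so its nerve has infinitely many nondegenerate simplices; it is \emph{not} a finite indexing diagram. The paper addresses this explicitly: it invokes a right-cofinal functor from the finite poset $\mathcal{J}_n \subseteq (\Delta_{\le n})_{/[n]}$ of \emph{injective} maps $[k]\hookrightarrow[n]$ (citing \cite[Lemma 1.2.4.17]{HA}), which identifies $\sk^n X$ with a colimit over a punctured $(n{+}1)$-cube. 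Your inductive pushout argument does, as a byproduct, show $\sk^n X\in\Spc^{\psmall}$, so the conclusion survives; but the stated reason does not.

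\textbf{Comparison of approaches.} The paper's route is to use the cofinality with $\mathcal{J}_n$ once and for all: this both exhibits $\sk^n X$ as a finite (cube-shaped) colimit and lets one read off a closed formula $|\sk^n X|=\sum_{k}(-1)^k\binom{n+1}{k+1}x_k$ by a single iterated inclusion--exclusion, after which the identity with $\sum_k(-1)^k\cl{x}_k$ is pure binomial algebra. Your route trades this single cube computation for the recursion $|\sk^n X|=|\sk^{n-1}X|+(-1)^n(x_n-|\mathrm{Lat}_n|)$ together with the evaluation $|\mathrm{Lat}_n|=\sum_{k<n}\binom{n}{k}\cl{x}_k$. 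That latching computation is itself a colimit over a punctured $n$-cube (the poset of proper surjections $[n]\twoheadrightarrow[k]$, equivalently proper subsets of $\{1,\dots,n\}$), so you end up doing essentially the same cube inclusion--exclusion the paper does, but one dimension down and inside an induction. It works, but it is strictly more bookkeeping; the cofinality with $\mathcal{J}_n$ is the cleaner packaging, and it simultaneously fixes the finiteness issue you glossed over.
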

\begin{proof}
    By definition, $\sk^n X$ is the colimit of the restriction of $X_\bullet$ along $\Delta_{\le n}^\op \into \Delta^\op$.
    While the diagram shape $\Delta_{\le n}$ is not itself finite, it admits a right cofinal map from the full subcategory $\mathcal{J}_n \sseq (\Delta_{\le n})_{/[n]}$ spanned by the injective maps $[m] \into [n]$ (see \cite[Lemma 1.2.4.17]{HA}). We can thus compute the partial realizations of $X_\bullet$ using these finite diagrams as follows
    \[
        \sk^n X = \colim_{\mathcal{J}_n^\op} X_\bullet.
    \]
    As an illustration, the cases $n=0,1,2$ amount to the following co-cartesian 1-, 2-, and 3-dimensional cubes:
    
    \[\begin{tikzcd}
    	{X_0} && {X_1} && {X_0} && {X_2} && {X_1} \\
    	& {,} &&&& {,} && {X_1} && {X_0} \\
    	{\sk^0 X} && {X_0} && {\sk^1 X} && {X_1} && {X_0} \\
    	&&&&&&& {X_0} && {\sk^2 X.}
    	\arrow[from=1-7, to=3-7]
    	\arrow[from=1-7, to=1-9]
    	\arrow["{\text{ }}"{description}, from=3-7, to=3-9]
    	\arrow[from=1-9, to=3-9]
    	\arrow[from=1-9, to=2-10]
    	\arrow[from=2-10, to=4-10]
    	\arrow[from=3-9, to=4-10]
    	\arrow[from=3-7, to=4-8]
    	\arrow[from=4-8, to=4-10]
    	\arrow[from=1-7, to=2-8]
    	\arrow[from=2-8, to=4-8]
    	\arrow[from=2-8, to=2-10]
    	\arrow[from=1-3, to=3-3]
    	\arrow[from=3-3, to=3-5]
    	\arrow[from=1-5, to=3-5]
    	\arrow[from=1-3, to=1-5]
    	\arrow["\wr", from=1-1, to=3-1]
    	\arrow["\lrcorner"{anchor=center, pos=0.125, rotate=180}, draw=none, from=3-5, to=1-3]
    \end{tikzcd}\]

    Applying iteratively the additivity  (i.e. inclusion-exclusion) of the homotopy cardinality, we get the formula
    \[
        |\sk^n X| =  \sum_{k=0}^{n} (-1)^k \binom{n}{k+1}x_k,
    \]
    and hence
    \[
        (-1)^n(|\sk^n X| - |\sk^{n-1} X|) = 
        \sum_{k=0}^{n} (-1)^{n-k} \binom{n}{k+1}x_k - 
        \sum_{k=0}^{n-1} (-1)^{n-k} \binom{n-1}{k+1}x_k = 
    \]
    \[
        \sum_{k=0}^{n}(-1)^{n-k} 
        \bigg(\binom{n}{k+1} - \binom{n-1}{k+1}\bigg)x_k = 
        \sum_{k=0}^{n}(-1)^{n-k} \binom{n}{k}x_k =
        \cl{x}_n.
    \]
    The claim follows by rearranging the above expression.
\end{proof}

We obtain the following consequence for $\ell$-adically continuous resolutions:

\begin{cor}
    For an $X_\bullet \colon \Delta^\op \to \Spc^{\psmall}$ be an $\ell$-adically continuous resolution of $X$ we have,
    \[
        |\sk^n X| \oto{n\to \infty} |X| \qin \QQ_\ell.
    \]
\end{cor}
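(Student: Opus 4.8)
The plan is to combine \Cref{Cnt_Simp_Binomial} with Mahler's theorem essentially by inspection. First I would recall that by \Cref{Cnt_Simp_Binomial} the partial realizations $\sk^n X$ lie in $\Spc^{\psmall}$ and satisfy
\[
    |\sk^n X| = \sum_{k=0}^n (-1)^k \cl{x}_k,
\]
where $\cl{x}_k$ is the inverse binomial transform of $x_n = |X_n|$. So the sequence $n \mapsto |\sk^n X|$ is precisely the sequence of partial sums of the series $\sum_k (-1)^k \cl{x}_k$.

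Next I would invoke the hypothesis that $X_\bullet$ is an \emph{$\ell$-adically continuous resolution}: by definition the function $x_n = |X_n|$ extends to a continuous function $\hat{x}\colon \ZZ_\ell \to \QQ_\ell$. By Mahler's theorem, this is equivalent to $\cl{x}_n \to 0$ in the $\ell$-adic norm as $n \to \infty$, and moreover the value of the extension at any $a \in \ZZ_\ell$ is given by the convergent series $\hat{x}(a) = \sum_{k=0}^\infty \binom{a}{k}\cl{x}_k$. Since $\cl{x}_n \to 0$ $\ell$-adically, the series $\sum_{k=0}^\infty (-1)^k \cl{x}_k$ converges in $\QQ_\ell$, so the partial sums $|\sk^n X|$ converge. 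Evaluating Mahler's formula at $a = -1$, where $\binom{-1}{k} = (-1)^k$, gives
\[
    \hat{x}(-1) = \sum_{k=0}^\infty (-1)^k \cl{x}_k = \lim_{n\to\infty} |\sk^n X|.
\]
Finally, the defining condition of an $\ell$-adically continuous resolution is exactly that $\hat{x}(-1) = |X|$, so $|\sk^n X| \to |X|$ in $\QQ_\ell$, as claimed.

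There is essentially no obstacle here: the statement is an immediate amalgamation of the combinatorial identity already established in \Cref{Cnt_Simp_Binomial} (which does the categorical work of identifying the partial realizations and their cardinalities), the characterization in Mahler's theorem (which translates $\ell$-adic continuity into decay of the binomial transform and pins down the value at $-1$), and the definition of the resolution being $\ell$-adically continuous. The only point requiring a word of care is matching the alternating-sign partial sums to the Mahler expansion at $a=-1$ via $\binom{-1}{k}=(-1)^k$; once that is noted the proof is a one-liner.
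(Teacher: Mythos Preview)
Your proof is correct and follows exactly the same approach as the paper: combine \Cref{Cnt_Simp_Binomial} with Mahler's theorem, identifying the partial sums $\sum_{k=0}^n(-1)^k\cl{x}_k$ with the Mahler expansion at $a=-1$ via $\binom{-1}{k}=(-1)^k$, and then use the definition of an $\ell$-adically continuous resolution. The paper's version is just the compressed one-liner you allude to in your final paragraph.
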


Thus, the number $|X|$, which is by definition the $\ell$-adic limit of $|X_n|$ as $n \to -1$, is also the $\ell$-adic limit of $|\sk^n X|$ as $n\to \infty$, 

\begin{proof}
    By \Cref{Cnt_Simp_Binomial} and Mahler's theorem we have
    \[
        \lim_{n \to \infty} |\sk^n X| = 
        \sum_{n=0}^\infty (-1)^n \cl{x}_n =
        \sum_{n=0}^\infty \binom{-1}{n} \cl{x}_n = 
        \lim_{n \to -1}x_n = |X|.
    \]    
\end{proof}

One particular source of simplicial resolutions is the Check resolution $\check{C}(f)_\bullet$ of a map $f \colon X\to Y$ given object-wise by the iterated fiber-product
\[
    \check{C}(f)_n = \overset{(n+1) \text{ times}}{\overbrace{X \times_Y X \times_Y \dots \times_Y X}}.
\]
With all the preliminary work done above, we easily get:

\begin{prop}\label{Check_Cnt}
    Given a fiber sequence of  $\pi$-finite $p$-spaces,
    \(
        X \oto{\ f\ } Y \too Z
    \)
    with $Z$ connected, $\check{C}(f)_\bullet$ is an $\ell$-adically continuous resolution of $Y$. In particular, 
   \[
        \lim_{n \to \infty} |\sk^n \check{C}(f)| \too |Y|.
   \]
\end{prop}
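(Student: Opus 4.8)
The plan is to verify that the Čech resolution $\check{C}(f)_\bullet$ satisfies the definition of an $\ell$-adically continuous resolution, namely that $\colim \check{C}(f)_\bullet = Y$ and that the sequence $y_n := |\check{C}(f)_n|$ extends to a continuous function on $\ZZ_\ell$ with value $|Y|$ at $-1$. The ``in particular'' clause is then immediate from the corollary just proven. First I would recall that since $X \to Y$ is the base change of $\pt \to Z$ along $Y \to Z$ and $Z$ is connected, the map $f$ is an effective epimorphism (surjective on $\pi_0$), so its Čech nerve has colimit $Y$ by descent; moreover each $\check{C}(f)_n = X \times_Y \cdots \times_Y X$ is again $\pi$-finite and a $p$-space, so the resolution lands in $\Spc^{\psmall}$.

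Next I would compute $y_n = |\check{C}(f)_n|$ using the multiplicativity and fiber-sequence (division) properties of the homotopy cardinality. Writing $F = \Omega Z$ for the fiber of $f$, iterated base change gives a fiber sequence $F^{\times n} \to \check{C}(f)_n \to X$ (the $(n{+}1)$-fold fiber product over $Y$ has the $(n{+}1)$-fold product of $X$'s with $n$ fibered ``corrections''), so by the extended multiplicativity property $|\check{C}(f)_n| = |X| \cdot |F|^{n} = |X| \cdot |\Omega Z|^{n} = |X| \cdot |Z|^{-n}$. Here I would be slightly careful: the cleanest route is to induct, using that $\check{C}(f)_{n} \to \check{C}(f)_{n-1}$ is a base change of $f$ hence has fiber $F = \Omega Z$ over each component of the (connected, since $Z$ connected forces everything in sight appropriately connected — or handle $\pi_0$ by additivity) target, giving $y_n = y_{n-1} \cdot |F| = y_{n-1}/|Z|$, with $y_0 = |X|$.

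Finally I would observe that $g(t) := |X| \cdot |Z|^{-t}$ is visibly a continuous function $\ZZ_\ell \to \QQ_\ell$: since $Z$ is a connected $\pi$-finite $p$-space, $|Z| = \prod_{k\ge 1}|\pi_k Z|^{(-1)^k}$ is a ratio of powers of $p$, and as $p \equiv 1 \pmod \ell$ the function $t \mapsto p^{t}$ is $\ell$-adically continuous, hence so is $|Z|^{-t}$ and so is $g$. Its value at $t = -1$ is $|X|\cdot|Z| = |Y|$ by the fiber-sequence property applied to $X \to Y \to Z$ itself. This is exactly the defining condition, so $\check{C}(f)_\bullet$ is an $\ell$-adically continuous resolution of $Y$, and the displayed limit formula follows from the corollary. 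The main (very mild) obstacle is bookkeeping around $\pi_0$: one should make sure the fiber-sequence cardinality identities are applied componentwise over the connected base $Z$ and that $|X|$, $|Y|$ are the sums over components; since $Z$ is connected this is routine, and the formula $y_n = |X|/|Z|^n$ holds on the nose.
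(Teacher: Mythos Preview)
Your proof is correct and follows essentially the same route as the paper: both identify $\check{C}(f)_n$ as having cardinality $|X|\cdot|\Omega Z|^n$ (the paper writes this as $X \times G^n$ for $G=\Omega Z$ directly, whereas you reach the cardinality identity by induction on the pullback square), observe that this is $\ell$-adically continuous in $n$ since $|\Omega Z|$ is a power of $p\equiv 1\pmod\ell$, and evaluate at $n=-1$ to get $|X||Z|=|Y|$. Your worry about $\pi_0$ is unnecessary once you note that $f$ is pulled back from $\pt\to Z$, so every fiber of every base change of $f$ is $\Omega Z$, and the multiplicativity formula applies componentwise.
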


\begin{proof}
    Since $Z$ is connected, $f$ is surjective on $\pi_0$, so $\colim \check{C}(f)_\bullet \simeq Y$. On the other hand, 
    let $G = \Omega Z$ be the corresponding $\pi$-finite $p$-group, so that $Y = X/\!\!/G$. The Check resolution of $f$ assumes the form
    \(
        \check{C}(f)_n \simeq X \times G^n,
    \)
    and since all homotopy groups of $G$ are $p$-groups, the sequence 
    \[
        x_n := |\check{C}(f)_n| = |X| |G|^n
    \]
    extends to an $\ell$-adically continuous function on $\ZZ_p$, whose value at $n=-1$ is 
    \[
        |X||G|^{-1} = |X||Z| = |Y|. 
    \]
\end{proof}

Going back to our motivating example, for a finite $p$-group $G$, the Check resolution of the base-point map $\pt \to BG$ produces the bar construction on $G$, which is a simplicial set $X_\bullet$ with $|G|^n$ simplices in degree $n$, and whose colimit is $BG$. This reproduces the fact that the series $\sum_{n=0}^\infty (-1)^n (|G| - 1)^n$, where $(|G| - 1)^n$ is the number of \textit{non-degenerate} $n$-simplices, converges $\ell$-adically to $1/|G|$, which is also the limit $n \to -1$ of the function $|G|^n$. However, the theory is applicable in greater generality. 

\begin{example}
    If $G$ is an \textit{abelian} $p$-group, $BG$ is again a $\pi$-finite $p$-group, and we can use the bar construction iteratively to produce a $d$-fold simplicial set with $|G|^{n_1n_2\dots n_d}$ simplices in the multi-degree $(n_1,n_2,\dots,n_d)$, whose colimit is $B^d G$.  Passing to the diagonal, we get a simplicial set $X_\bullet$ with $|G|^{n^d}$ simplices in degree $n$, and whose colimit is still $B^d G$. We have,
    \[
        |G|^{n^d} \oto{n\to -1} |G|^{(-1)^d} = |B^d G|.
    \]
    Hence, $X_\bullet$ is an $\ell$-adically continuous resolution of $B^d G$.  Consequently, we get $|\sk^n (B^d G)| \to |B^d G|$, which can be thought of as computing the homotopy cardinality of the space $B^dG$ by the infinite alternating sum of number of non-degenerate simplices in each dimension of the above simplicial structure.
\end{example}

\begin{rem}
    In the previous example, the number of non-degenerate simplices in the simplicial set $X_\bullet$ realizing to $B^dG$ is of order of magnitude $p^{n^d}$. The super-exponential growth of these numbers prevents them from being summable by ordinary (archimedian) regularization techniques, such as analytic continuation of power series. 
\end{rem}

We conclude with a discussion of a more elaborate family of examples of $\ell$-adically continuous resolutions coming form bar constructions of suitably finite \textit{simplicial groups}. Recall that the associated Moore complex of a simplicial group $G_\bullet$, defined by
\[
    N_n G = \bigcap_{i=1}^n \ker(G_n \oto{d_i} G_{n-1}),
\]
with the remaining $d_0$ differential, is a (non-abelian) chain-complex of groups with ``homology'' given by $\pi_n(G)$. 

\begin{defn}
    A \textit{$\pi$-finite simplicial $p$-group} is a simplicial group $G_\bullet$, such that $N_nG$ is a finite $p$-group for all $n$, which vanishes for $n \gg 0$. 
\end{defn}

The realization of a $\pi$-finite simplicial $p$-group $G_\bullet$ is a $\pi$-finite $p$-group $G$. Moreover, we get a degree-wise finite simplicial model for the $\pi$-finite $p$-space $BG$, using the bar construction $(BG)_\bullet$ given by the diagonal of the bi-simplicial set with $(BG_n)_m = (G_n)^m$.

\begin{prop}\label{Simp_Group_Convergence}
    Let $G_\bullet$ be a $\pi$-finite simplicial $p$-group, the bar construction $(BG)_\bullet$ is an $\ell$-adically continuous resolution of $BG$.
\end{prop}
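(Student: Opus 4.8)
The plan is to reduce the statement about $(BG)_\bullet$ to the criterion for $\ell$-adic continuous resolutions established above, namely that the sequence $x_n = |(BG)_n|$ extends to a continuous function on $\ZZ_\ell$ whose value at $-1$ is $|BG|$. The colimit condition $\colim (BG)_\bullet \simeq BG$ is formal: the bi-simplicial set $(BG_n)_m = (G_n)^m$ realizes in the $m$-direction to $BG_n$ levelwise, and taking the diagonal commutes with the realization in the $n$-direction. So the entire content is the $\ell$-adic continuity of $x_n = |(BG)_n|$ together with the identification of the extrapolated value at $n=-1$.

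The key computation is a formula for $x_n = |(BG)_n|$ in terms of homotopy-theoretic invariants of the simplicial group $G_\bullet$. First I would note that $(BG)_n = B(G_n)$ is the $n$-th term of the diagonal, which by the bar construction is the homotopy orbit space $\pt /\!\!/ G_n$, so $|(BG)_n| = |B G_n| = 1/|G_n|$ --- wait, this is the $\pi$-finite cardinality of $BG_n$, but $G_n$ is a genuine (discrete) finite $p$-group, so $|(BG)_n| = 1/|G_n|$. Now $|G_n|$ is computed from the Moore complex: since $G_n$ is built as an iterated (semidirect-product) extension by the groups $N_k G$ for $k \le n$ --- more precisely, the simplicial identities give a decomposition (as sets, via the degeneracies) $G_n \cong \prod_{k=0}^n (N_k G)^{\binom{n}{k}}$, because the non-degenerate part of $G_n$ in ``dimension $k$'' is $N_k G$ and there are $\binom{n}{k}$ ways to obtain a surjection $[n] \twoheadrightarrow [k]$ in $\Delta$. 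Hence $|G_n| = \prod_{k=0}^{N} |N_k G|^{\binom{n}{k}}$, a finite product since $N_k G = 0$ for $k \gg 0$, and therefore
\[
    x_n = |(BG)_n| = \prod_{k=0}^{N} |N_k G|^{-\binom{n}{k}}.
\]
Writing $c_k = \log_p |N_k G|$, we have $x_n = p^{h(n)}$ where $h(n) = -\sum_k c_k \binom{n}{k}$ is an integer-valued polynomial in $n$ of some degree $\le N$. Since $p \equiv 1 \pmod \ell$, the function $p^{h(n)}$ is $\ell$-adically continuous in $n$ (this is exactly the mechanism already used in \Cref{G_Expoly} and \Cref{G_ell}: expand $p = 1 + \ell M$ and use that $\binom{h(x)}{j}$ is a polynomial, so the inverse binomial transform of the $j$-th tail is divisible by $\ell^{j+1}$, giving $\ell^{\lfloor n/\deg h\rfloor} \mid \cl{x}_n$). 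Therefore $x_n$ extends to a continuous function on $\ZZ_\ell$, and its value at $-1$ is $p^{h(-1)} = \prod_k |N_k G|^{-\binom{-1}{k}} = \prod_k |N_k G|^{(-1)^{k+1}}$.

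It remains to identify $p^{h(-1)}$ with $|BG|$. Since $BG$ is a connected $\pi$-finite $p$-space with $\pi_{i}(BG) = \pi_{i-1}(G)$, we have $|BG| = \prod_{i\ge 1}|\pi_{i-1}(G)|^{(-1)^i} = \prod_{j\ge 0}|\pi_j(G)|^{(-1)^{j+1}}$. So I need $\prod_k |N_k G|^{(-1)^{k+1}} = \prod_j |\pi_j(G)|^{(-1)^{j+1}}$, i.e., the Euler characteristic (alternating product of orders) of the Moore complex equals the alternating product of the orders of its homology. This is the multiplicative Euler-characteristic identity for a finite-length (possibly non-abelian) chain complex of finite groups: the exactness of $0 \to Z_k \to N_k G \to B_{k-1} \to 0$ and $0 \to B_k \to Z_k \to \pi_k \to 0$ (in the simplicial-group setting one must be slightly careful, but the relevant subquotients are all finite $p$-groups and the counting identity $|N_k G| = |Z_k||B_{k-1}|$, $|Z_k| = |B_k||\pi_k|$ holds) gives $\prod_k |N_k G|^{(-1)^{k}} = \prod_k |\pi_k(G)|^{(-1)^k}$ by telescoping. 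This finishes the identification and hence the proof.

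The main obstacle I anticipate is making the decomposition $G_n \cong \prod_k (N_k G)^{\binom{n}{k}}$ and the Moore-complex Euler characteristic identity precise in the \emph{non-abelian} setting: the Moore complex of a simplicial group is only a complex of groups with a (non-central, non-normal in general) differential, so ``$|N_k G| = |Z_k||B_{k-1}|$'' requires knowing that the relevant images and kernels are subgroups of the right order, which follows from the Kan condition on simplicial groups but deserves a careful citation (e.g. to the theory of Moore complexes, as in Goerss--Jardine or May's \emph{Simplicial Objects}). Once those group-theoretic bookkeeping facts are in hand, everything else is an instance of machinery already developed: the $\ell$-adic continuity of $p^{\text{polynomial}}$ from \Cref{G_ell}, and the criterion for $\ell$-adically continuous resolutions. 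I would therefore spend the bulk of the write-up on the identity $|G_n| = \prod_k |N_k G|^{\binom{n}{k}}$ and treat the convergence and the value at $-1$ as short consequences.
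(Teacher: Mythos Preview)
Your overall strategy matches the paper's: show that $\log_p |G_n|$ is a polynomial in $n$ via the Moore-complex decomposition, deduce $\ell$-adic continuity of $x_n$, and identify the value at $n=-1$ with $|BG|$ using the Euler characteristic of the Moore complex. The paper handles your anticipated non-abelian bookkeeping by simply citing the iterated semidirect product decomposition of $G_n$ in terms of the $N_kG$ (\cite[Proposition~12]{mutlu2001iterated}), which gives $|G_n|=\prod_k |N_kG|^{\binom{n}{k}}$ directly; that reference would discharge the obligation you flag.

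However, there is a genuine error in your identification of the levels. The paper's $(BG)_\bullet$ is by definition the \emph{diagonal} of the bisimplicial set $(n,m)\mapsto (G_n)^m$, so $(BG)_n=(G_n)^n$ is a finite \emph{set} of cardinality $|G_n|^n$; it is not $BG_n$. You have conflated the diagonal with the row-wise realization $[n]\mapsto BG_n$ --- these agree after geometric realization, but not levelwise. Consequently your $x_n=1/|G_n|=p^{-f(n)}$ is the wrong sequence for the object in the proposition; the correct one is $x_n=|G_n|^n=p^{\,n f(n)}$. With this fix the remainder of your argument is exactly the paper's: $n f(n)$ is again a polynomial, so $x_n$ is $\ell$-adically continuous, and at $n=-1$ one gets $p^{-f(-1)}=|G|^{-1}=|BG|$. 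Your extrapolated value happens to agree because $-f(-1)=(-1)\cdot f(-1)$, but what your computation actually establishes is that the \emph{different} simplicial object $[n]\mapsto BG_n$ is an $\ell$-adically continuous resolution of $BG$, not the diagonal $(BG)_\bullet$ appearing in the statement.
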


\begin{proof}
    Consider the sequences $f(n) := \log_p|G_n|$ and $\cl{f}(n) := \log_p|N_n G|$. The iterated semidirect product decomposition of $G_n$ in terms of the $N_kG$-s (see, e.g., \cite[Proposition 12.]{mutlu2001iterated}), implies that the sequence $f(n)$ is the binomial transform of the sequence $(\cl{f}(n))$.
    By assumption,  there exists $d\ge 0$ such that $\cl{f}(n) = 0$ for $n>d$. Hence,
    \[
        f(n) = \sum_{k=0}^d \binom{n}{k}\cl{f}(k).
    \]
    is a polynomial. Consequently, the function $|G_n| = p^{f(n)}$ is $\ell$-adically continuous with
    \[
        p^{f(-1)} = \prod_{n=1}^\infty |N_nG|^{(-1)^n} = |G|.
    \]
    Passing to the bar construction, we get
    \[
        |(BG)_n| = |G_n|^n = p^{n f(n)},
    \]
    which is again $\ell$-adically continuous with extrapolation
    \[
        p^{-f(-1)} = |G|^{-1} = |BG|.
    \]
\end{proof}

\begin{rem}
    There is another standard simplicial model for the classifying space of a simplicial group $G_\bullet$, which is denoted by $(\cl{W}G)_\bullet$. It differs from $(BG)_\bullet$ and in particular has
    \[
        (\cl{W}G)_n = G_0 \times G_1 \times \cdots \times G_{n-1}.
    \]
    One easily checks that for a $\pi$-finite simplicial $p$-group $G_\bullet$, the simplicial set $(\cl{W}G)_\bullet$ is also an $\ell$-adically continuous resolution of the space $BG$. 
\end{rem}

\bibliographystyle{alpha}
\bibliography{references}
\addcontentsline{toc}{section}{References}

\end{document}